%
%

\documentclass{amsart}

\usepackage[utf8]{inputenc}
\usepackage[T1]{fontenc}
\usepackage{lmodern}
\usepackage[english]{babel}
\usepackage{fullpage}
\usepackage{amsmath}
\usepackage{amsthm}
\usepackage{amsfonts}
\usepackage{amssymb}
\usepackage{hyperref}
\usepackage{color}
\usepackage{verbatim}

\newcommand{\R}{\mathbb{R}}
\newcommand{\C}{\mathbb{C}}

\newcommand{\Z}{\mathbb{Z}}
\newcommand{\Ri}{\mathcal{R}}

\renewcommand{\H}{\mathcal{H}}

\newcommand{\supp}{\operatorname{supp}}
\newcommand{\diam}{\operatorname{diam}}
\newcommand{\dist}{\operatorname{dist}}

\newcommand{\Stop}{\mathsf{Stop}}
\newcommand{\Top}{\mathsf{Top}}
\newcommand{\Tree}{\mathsf{Tree}}
\newcommand{\Good}{\mathsf{Good}}
\newcommand{\Reg}{\mathsf{Reg}}
\newcommand{\ND}{\mathsf{ND}}

\newcommand{\D}{\mathcal{D}}

\renewcommand{\l}{\ell}
\newcommand{\dmu}{\delta_{\mu}}
\newcommand{\ntmu}{||T\mu||_{L^2(\mu)}}
\renewcommand{\epsilon}{\varepsilon}
\newcommand{\refeq}[1]{{(\ref{#1})}}
\newcommand{\I}{\mathsf{I}}

\newtheorem{theorem}{Theorem}
\newtheorem{lemma}{Lemma}
\newtheorem{corollary}{Corollary}

\newtheorem{other}{\bf Theorem}

\newtheorem{otherl}{\bf Lemma}

\begin{document}

\title[Short title]{Geometric conditions for the $L^2$-boundedness of singular integral operators with odd kernels with respect to measures with polynomial growth in $\R^d$}


\author{Daniel Girela-Sarri\'{o}n}
\address{Departament de Matem\`{a}tiques\\
Edifici C Facultat de Ci\`{e}ncies\\
08193 Bellaterra (Barcelona, Spain)}
\email{dgirela@mat.uab.cat}
\thanks{I wish to thank my advisor, X. Tolsa, for his valuable guidance during the preparation of this work. I was supported by project MTM-2010-16232 (MICINN, Spain) and partially supported by the ERC grant 320501 of the European Research Council (FP7/2007-2013), and projects MTM-2013-44304-P (MICINN, Spain) and 2014-SGR-75 (Catalonia).}

\subjclass[2010]{Primary: 42B20}

\keywords{Singular integrals, Calder\'{o}n-Zygmund theory, $L^2$ estimates, corona decomposition, Lipschitz harmonic capacity}

\date{\today}

\dedicatory{}

\begin{abstract}
Let $\mu$ be a finite Radon measure in $\R^d$ with polynomial growth of degree $n$, although not necessarily $n$-AD regular. We prove that under some geometric conditions on $\mu$ that are closely related to rectifiability and involve the so-called $\beta$-numbers of Jones, David and Semmes, all singular integral operators with an odd and sufficiently smooth Calder\'{o}n-Zygmund kernel are bounded in $L^2(\mu)$. As a corollary, we obtain a lower bound for the Lipschitz harmonic capacity of a compact set in $\R^d$ only in terms of its metric and geometric properties.
\end{abstract}

\maketitle

\section{Introduction}

We say that a function $k\colon \R^d\times\R^d\setminus\{(x,y)\in\R^d\times\R^d\colon x=y\}$ is an $n$-dimensional Calder\'{o}n-Zygmund kernel if there are constants $c>0$ and $0<\delta\leq 1$ such that
\begin{equation*}
|k(x,y)|\leq \frac{c}{|x-y|^n}\;\text{ if }\;x\neq y
\end{equation*}
and
\begin{equation}
\label{eq:defCZKernel}
|k(x,y)-k(x',y)|+|k(y,x)-k(y,x')|\leq c\frac{|x-x'|^\delta}{|x-y|^{n+\delta}}\;\text{ if }\;|x-x'|\leq\frac{|x-y|}{2}.
\end{equation}
\par\medskip
Given a signed Radon measure $\nu$ in $\R^d$ and $x\in\R^d$, we define
\begin{equation*}
T\nu(x) = \int k(x,y)d\nu(y), \;\; x\in \R^d\setminus\supp(\nu)
\end{equation*}
and we say that $T$ is a singular integral operator with kernel $k$. The integral above need not be convergent for $x\in\supp(\nu)$, and this is why one introduces the truncated operators associated to $T$, which are defined, for every $\epsilon>0$, by
\begin{equation*}
T_\epsilon\nu(x) = \int_{|x-y|>\epsilon}k(x,y)d\nu(y), \; x\in\R^d.
\end{equation*}
Notice that the integral above is absolutely convergent if, for example, $|\nu|(\R^d)<\infty$.
\par\medskip
If $\mu$ is a fixed positive Radon measure in $\R^d$ and $f\in L^1_{loc}(\mu)$, we set
\begin{equation*}
T_\mu f(x) = T(f\mu)(x),\; x\in\R^d\setminus\supp(\mu)
\end{equation*}
and, for $\epsilon>0$,
\begin{equation*}
T_{\mu,\epsilon}f(x) = T_\epsilon(f\mu)(x), \; x\in\R^d.
\end{equation*}
We say that $T_\mu$ is bounded in $L^2(\mu)$ if there is a constant $C>0$ such that, for all $\epsilon>0$, $||T_{\mu,\epsilon}f||_{L^2(\mu)}\leq C||f||_{L^2(\mu)}$ for all $f$ in $L^2(\mu)$. The norm of $T_\mu$ is the infimum of all those constants $C$ (an analogous definition is used to define the boundedness of $T_\mu$ in other spaces). Probably, the most important examples of this class of operators are the $n$-dimensional Riesz transform, given by
\begin{equation*}
\Ri\nu(x) = \int \frac{x-y}{|x-y|^{n+1}}d\nu(y)
\end{equation*}
and its one-dimensional analog in $\R^2\equiv\C$, the Cauchy transform, defined by
\begin{equation*}
\mathcal{C}\nu(z) = \int\frac{d\nu(\zeta)}{\zeta-z}.
\end{equation*}
\par\medskip
In this paper, we study $L^2(\mu)$-boundedness of Calder\'{o}n-Zygmund operators with sufficiently smooth convolution-type kernels. More precisely, we will consider kernels of the form $k(x,y)=K(x-y)$, where $K\colon\R^d\setminus\{0\}\rightarrow\R$ is an odd and $\mathcal{C}^2$ function that satisfies
\begin{equation*}
|\nabla^{j} K(x)|\leq \frac{C(j)}{|x|^{n+j}}\text{ for all }x\neq 0\text{ and }j\in\{0,1,2\}.
\end{equation*}
It is easy to check that the inequalities above imply that $k$ is a Calderón-Zygmund kernel with $\delta=1$ in \refeq{eq:defCZKernel}. We will denote by $\mathcal{K}^n(\R^d)$ the class of all these kernels.
\par\medskip
In \cite{Tolsa-pubMat}, Tolsa proved the following result\footnote{Tolsa's result in \cite{Tolsa-pubMat} is actually stated for operators with smoother kernels than the ones we consider here. However, after the publication of \cite{Tolsa-Alphas}, it is obvious that it can be generalized to obtain Theorem \ref{Thm:ThmA}}:
\par\medskip
\begin{other}
\label{Thm:ThmA}
Let $\mu$ be a Radon measure in $\C$ without atoms. If the Cauchy transform $\mathcal{C}_\mu$ is bounded in $L^2(\mu)$, then all $1$-dimensional Calder\'{o}n-Zygmund operators $T_\mu$ with kernels in $\mathcal{K}^1(\C)$ are also bounded in $L^2(\mu)$.
\end{other}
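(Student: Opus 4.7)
The overall strategy is to exhibit a purely geometric, kernel-independent condition on $\mu$ which is (a) implied by the $L^{2}(\mu)$-boundedness of $\mathcal{C}_{\mu}$ and (b) strong enough to force the $L^{2}(\mu)$-boundedness of every $T_{\mu}$ with kernel in $\mathcal{K}^{1}(\C)$. A first remark is that, since $\mu$ has no atoms and $\mathcal{C}_{\mu}$ is bounded on $L^{2}(\mu)$, a standard argument (testing $\mathcal{C}_{\mu}$ on $\chi_{B(x,r)}$ and using antisymmetry) shows that $\mu$ has linear growth, i.e.\ $\mu(B(x,r))\leq C r$. This brings us into a setting where the Nazarov--Treil--Volberg non-homogeneous Calder\'on--Zygmund theory, and the David--Mattila dyadic lattice adapted to $\mu$, are available.

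The core of the plan is to invoke Tolsa's characterization from \cite{Tolsa-Alphas}: in the presence of linear growth, $\mathcal{C}_{\mu}$ is bounded on $L^{2}(\mu)$ if and only if the $\alpha$-coefficients $\alpha_{\mu}(Q)$ (measuring the distance, in a Wasserstein-type norm, between $\mu|_{2Q}$ and the set of flat measures supported on a line) satisfy a Carleson packing condition
\begin{equation*}
\sum_{Q\subset R}\alpha_{\mu}(Q)^{2}\,\mu(Q)\leq C\,\mu(R)
\end{equation*}
over the cells $Q$ of a David--Mattila lattice. The content of \cite{Tolsa-Alphas} is that this condition does not depend on the kernel: it is a purely geometric quantitative rectifiability statement about $\mu$. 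Hence the hypothesis of Theorem \ref{Thm:ThmA} reduces to the combination of linear growth with a Carleson packing of the $\alpha_{\mu}(Q)$'s.

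With this geometric input in hand, I would prove boundedness of an arbitrary $T_{\mu}$ with kernel $K\in\mathcal{K}^{1}(\C)$ by a corona decomposition argument in the spirit of David--Semmes. Starting from any top cell $R$, one runs a stopping-time procedure that selects, along each branch, the first descendant $Q$ where either $\mu$ fails to be comparable to a flat measure on a fixed approximating line $L_{R}$, or the $\beta$-type (equivalently $\alpha$-type) deviation from $L_{R}$ becomes too large; the $\alpha$-Carleson condition guarantees that the union of stopping cells has controlled $\mu$-mass, so one gets a genuine corona-type packing. On each tree of non-stopping cells one approximates $\mu$ by the arc-length measure on a Lipschitz graph $\Gamma_{R}$ built from $L_{R}$, and uses oddness and $\mathcal{C}^{2}$-regularity of $K$ (which is where $\mathcal{K}^{1}(\C)$ enters) together with the fact that singular integrals with odd smooth kernels are bounded on $L^{2}$ of arc-length measure on Lipschitz graphs by the classical theory. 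The contributions of the stopping cells are summed against the $\alpha$-Carleson estimate, producing a bounded paraproduct-type error. A suitable good-$\lambda$ or $T(1)$-type synthesis then upgrades this to the required norm bound for $T_{\mu}$.

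The main obstacle I anticipate is the technical execution of the corona decomposition in the non-AD-regular, non-doubling setting: cells must be taken from the David--Mattila lattice, the Lipschitz-graph approximation and its comparison to $\mu$ must be built from $\alpha_{\mu}$ rather than $\beta_{\mu}$ (so that the input from \cite{Tolsa-Alphas} can be used directly), and the error terms arising from the difference between $T_{\mu}$ and its analogue on the arc-length measure of $\Gamma_{R}$ must be dominated by the $\alpha$-packing with a linear (rather than quadratic) dependence. The oddness hypothesis is essential at exactly this last step, since it provides the cancellation needed to replace $K$ by its average on the approximating line and reduce the error to something controlled by $\alpha_{\mu}(Q)$ rather than by $1$.
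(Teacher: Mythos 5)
Your high-level strategy --- extract a kernel-independent geometric condition from the boundedness of $\mathcal{C}_\mu$ and then run a corona decomposition that works for any odd kernel in $\mathcal{K}^1(\C)$ --- is the correct shape, and your opening observation that linear growth follows from non-atomicity plus boundedness of $\mathcal{C}_\mu$ is right. The gap is in the bridge you choose. You invoke \cite{Tolsa-Alphas} for a statement of the form ``for linear-growth $\mu$, $\mathcal{C}_\mu$ is bounded in $L^2(\mu)$ if and only if the $\alpha_\mu$ coefficients satisfy a Carleson packing condition.'' That equivalence is proved in \cite{Tolsa-Alphas} only for $n$-AD-regular measures, where it is one of several reformulations of uniform rectifiability; nothing there gives the forward implication ``$\mathcal{C}_\mu$ bounded $\Rightarrow$ $\alpha$-Carleson'' for a general non-doubling, non-AD-regular measure, which is exactly the class of measures Theorem~\ref{Thm:ThmA} must handle. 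Likewise, building a corona decomposition for such $\mu$ directly out of an $\alpha$- (or $\beta$-) square-function estimate is precisely the new construction of \cite{Azzam-Tolsa}, not something you can cite as pre-existing when proving Theorem~\ref{Thm:ThmA}.

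The route actually used in \cite{Tolsa-pubMat}, and summarized in the present paper, takes a different geometric invariant as the bridge: the Melnikov--Verdera identity relating $\|\mathcal{C}_{\epsilon}\mu\|_{L^2(\mu)}^2$ to the curvature $c^2(\mu)$. Boundedness of $\mathcal{C}_\mu$ together with linear growth yields a Carleson-type control on curvature, and the corona decomposition of \cite{Tolsa-pubMat} is built from that curvature packing, not from $\alpha$-coefficients. On each tree $\mu$ is then approximated by arc length on an AD-regular curve, the associated operators $K_R$ are bounded by classical singular-integral theory on rectifiable curves, and the off-diagonal sum is controlled by quasiorthogonality --- all features you correctly anticipate. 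The role of \cite{Tolsa-Alphas} in the paper is confined to the footnote: it is used a posteriori to relax the smoothness of $K$ from $\mathcal{C}^\infty$ down to the $\mathcal{C}^2$ class $\mathcal{K}^1(\C)$, not as the source of the corona. So the obstacle you flag at the end (running the corona from $\alpha$ in the non-AD-regular setting) is not merely a technicality to be executed; as written, it is the missing theorem.
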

\par\medskip
In order to prove this result, Tolsa relied on a suitable corona decomposition for measures with linear growth and finite curvature\footnote{We will not enter into details about curvature of measures and its relationship with the boundedness of the Cauchy transform here, but an interested reader is encouraged to read \cite[Chapters 3 and 7]{Tolsa-Libro} for further information on this issue.} and split the operator $T$ into a sum of different operators $K_R$, each of which are associated to a \emph{tree} of the corona decomposition. The operators $K_R$ are bounded because on each tree the measure $\mu$ can be approximated by arc length on an Ahlfors-David regular curve and, moreover, the operators $K_R$ behave in a quasiorthogonal way. However, as that corona construction relied heavily on the relationship between the Cauchy transform and curvatures of measures, it could not be easily generalized to higher dimensions. Nevertheless, using a new corona decomposition that involves the $\beta$-numbers of Jones, David and Semmes instead of curvature and is valid for all dimensions, Azzam and Tolsa \cite{Azzam-Tolsa} have recently proved the following:
\par\medskip
\begin{other}
Let $\mu$ be a finite Radon measure with compact support in $\C$ with linear growth. Then, for all $\epsilon>0$,
\begin{equation*}
||C_\epsilon\mu||^2_{L^2(\mu)} \lesssim ||\mu|| + \iint_0^{\infty}\beta^n_{\mu,2}(x,r)^2\theta_\mu^1(x,r)\frac{dr}{r}d\mu(x).
\end{equation*}
\end{other}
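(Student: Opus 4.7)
The plan is to mimic Tolsa's corona strategy from Theorem \ref{Thm:ThmA}, but with flatness measured by the $\beta$-numbers rather than by curvature of measures. First, I would fix a David--Mattila dyadic lattice $\D$ adapted to $\mu$ and build a corona decomposition of $\C$ into trees $\Tree(R)$ with roots $R$ and stopping families $\Stop(R)$. The stopping rules would trigger at the first descendant $Q_0\subseteq R$ where either the density $\theta^1_\mu$ of its parent differs from $\theta^1_\mu(R)$ by a prescribed factor, or the running sum
\begin{equation*}
\sum_{Q\in\D,\,Q_0\subseteq Q\subseteq R}\beta^1_{\mu,2}(2Q)^2\,\theta^1_\mu(2Q)
\end{equation*}
exceeds a fixed threshold. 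The children of stopping cubes become roots of new trees. Inside each tree, the small average $\beta_{\mu,2}$ together with a near-constant density forces $\mu$ to be close, in an $L^2$-average sense, to an AD-regular measure $\sigma_R$ supported on a Lipschitz graph $\Gamma_R$ with small Lipschitz constant.

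Next, I would split $\mathcal{C}_\epsilon\mu$ as $\sum_R K_R\mu + E$, where $K_R\mu$ is (up to negligible errors) the truncated Cauchy transform of the part of $\mu$ ``assigned'' to the tree $\Tree(R)$, namely $\chi_R\mu$ minus the parts belonging to descendant trees. The classical $L^2(\sigma_R)$-boundedness of the Cauchy transform (with a constant depending only on the Lipschitz character of $\Gamma_R$) then gives
\begin{equation*}
||K_R\mu||_{L^2(\mu)}^2\lesssim\theta^1_\mu(R)^2\,\mu(R)+\sum_{Q\in\Tree(R)}\beta^1_{\mu,2}(2Q)^2\,\theta^1_\mu(2Q)^2\,\mu(Q),
\end{equation*}
once one pays the cost of passing from $\mu|_{\Tree(R)}$ to $\sigma_R$, which by the usual approximation estimates contributes exactly the local $\beta^2\theta^1$-content.

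To assemble the pieces, I would run a quasi-orthogonality argument: for $R\neq R'$ at different scales, the inner product $\langle K_R\mu,K_{R'}\mu\rangle$ decays geometrically in the relative scale and spatial separation, thanks to the oddness and $\mathcal{C}^2$ smoothness of the Cauchy kernel (a Schur-type bound). Combined with a packing estimate for the roots,
\begin{equation*}
\sum_R \theta^1_\mu(R)^2\,\mu(R)\lesssim ||\mu||+\iint_0^\infty \beta^1_{\mu,2}(x,r)^2\,\theta^1_\mu(x,r)\,\frac{dr}{r}\,d\mu(x),
\end{equation*}
this yields the claimed bound. The main obstacle is exactly this packing inequality: every stopping event triggered by a density jump must be charged either against $||\mu||$ or against some extractable amount of $\beta^2\theta^1$-content, which requires a delicate stopping-time argument for the densities together with a telescoping/chain argument converting discrete sums over cubes into the continuous $dr/r$ integral on the right-hand side. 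The quasi-orthogonality step is itself subtle since $\mu$ need not be AD-regular globally, so one cannot invoke a single $T(1)$-type theorem but must instead splice local boundedness results on each $\sigma_R$, and ensure that the errors introduced between neighbouring trees do not destroy the $\beta$-controlled bound.
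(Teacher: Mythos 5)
Your high-level strategy --- David--Mattila lattice, corona decomposition into trees, diagonal terms controlled by $L^2$-boundedness on an approximating rectifiable object, quasi-orthogonality for cross terms, and a packing estimate for the roots --- is precisely the architecture of the paper (and of Azzam--Tolsa's original proof of this theorem). So the skeleton is right. But there are two substantive gaps.

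First, your stopping rules are too naive to deliver what you need. Stopping at the first cube where the density jumps or the running $\sum\beta^1_{\mu,2}(2Q)^2\theta^1_\mu(2Q)$ exceeds a threshold does \emph{not} by itself yield a Lipschitz graph (let alone a uniformly bi-Lipschitz image of $\R^1$, which is what the corona decomposition actually supplies) that approximates $\mu$ on the tree, nor does it yield the packing condition $\sum_R\theta^1_\mu(R)^2\mu(R)\lesssim \|\mu\|+\iint\beta^2\theta\,\frac{dr}{r}\,d\mu$. You correctly flag the packing inequality as ``the main obstacle'' and then leave it unresolved --- but that obstacle \emph{is} the theorem. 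The paper sidesteps it entirely by importing the corona decomposition as a black box (Lemma \ref{Th:CoronaDesc}, taken from \cite{Azzam-Tolsa}), in which the existence of the bi-Lipschitz surface $\Gamma_R$, the small-density property on the tree, the control on the stopping cubes via $\dmu$, and the packing bound are all proved together in a single, genuinely delicate construction. Without that construction your plan does not close.

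Second, your decomposition of the work between diagonal and packing is different from the paper's, and the difference matters. You propose a diagonal bound of the form $\|K_R\mu\|_{L^2(\mu)}^2\lesssim \theta^1_\mu(R)^2\mu(R)+\sum_{Q\in\Tree(R)}\beta^2\theta^2\mu(Q)$, i.e.\ you want to pay a $\beta$-cost on each tree for the passage from $\mu$ to $\sigma_R$. The paper does not do this: Lemma \ref{Le:Diagonal} gives $\sum_R\|K_R\mu\|^2\lesssim\sum_R\theta_\mu(B_R)^2\mu(R)$ with no $\beta$-term at all, because the corona decomposition guarantees $\Good(R)\subset\Gamma_R$ up to $\mu$-null sets, so the approximation by $\Gamma_R$ is exact on the good part, and the stopping part is handled by the Nazarov--Treil--Volberg suppressed-operator machinery ($T_{\Phi_R}$, the Cotlar inequality, weak $(1,1)$ for the suppressed operator) together with the density bound on regularized stopping cells --- none of which you mention. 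The $\beta$-content enters only once, through the packing condition. Your version might be made to work, but you would still need the same stopping-cell control, and you would then have to convert the discrete tree-wise $\beta$-sums into the continuous $\iint dr/r$ integral on top of the packing estimate, duplicating work. Finally, your treatment of quasi-orthogonality (``decays geometrically in relative scale and separation'') is too vague: the hard case is $Q\subsetneq R$ with $Q$ touching $\partial R$, and the mechanism the paper uses to make this summable is precisely the thin-boundary property of the David--Mattila cells (estimate \refeq{eqfk490}), which you do not invoke and which is not automatic for a generic dyadic lattice.
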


\par\medskip
Some notions need to be defined here: first of all, a Borel measure $\mu$ in $\R^d$ is said to have polynomial growth of degree $n$ if there is a constant $c_0\geq 0$ such that $\mu[B(x,r)]\leq c_0r^n$ for all $x\in \R^d$ and all $r>0$ (when $n=1$, $\mu$ is said to have linear growth). Secondly, given a ball $B(x,r)\subset\R^d$, we define
\begin{equation*}
\theta_\mu^n(x,r) = \frac{\mu(B(x,r))}{r^n}.
\end{equation*}
Finally, for $1\leq p<\infty$, the $\beta^n_{\mu,p}$ coefficient of a ball $B$ with radius $r(B)$ is defined by
\begin{equation*}
\beta^n_{\mu,p}(B) = \inf_L\left(\frac{1}{r(B)^n}\int_B\left(\frac{\dist(y,L)}{r(B)}\right)^pd\mu(y)\right)^{\frac{1}{p}},
\end{equation*}
where the infimum is taken over all $n$-planes $L\subset \R^d$. It is worth mentioning that these $\beta^n_{\mu,p}$ coefficients are a generalization of the $\beta$ numbers introduced by Jones in \cite{Jones}, where he used them to characterized compact subsets of the plane that are contained in a rectifiable set. Furthermore, David and Semmes proved in \cite{David-Semmes-Asterisque} that an $n$-AD-regular measure $\mu$ is uniformly rectifiable if, and only if, there is some constant $c>0$ such that, for every ball $B$ with centre on $\supp(\mu)$,
\begin{equation}
\label{eq:UnifRect}
\int_B\int_0^{r(B)}\beta^n_{\mu,2}(x,r)^2\frac{dr}{r}d\mu(x)\leq c\mu(B).
\end{equation}
\par\medskip
Very recently, Azzam and Tolsa (see \cite{Azzam-Tolsa} and \cite{Tolsa-PartI}) have shown that a positive and finite Borel measure $\mu$ in $\R^d$ with
\begin{equation*}
0<\limsup_{r\rightarrow 0}\,\theta^n_\mu(x,r)<\infty \text{ for }\mu-\text{a.e. }x\in\R^d
\end{equation*} 
is $n$-rectifiable if, and only if,
\begin{equation}
\label{eq:Rectif}
\int_0^1\beta_{\mu,2}(x,r)^2\frac{dr}{r}<\infty
\end{equation}
for $\mu$-a.e. $x\in\R^d$.
\par\medskip
Using the corona decomposition from \cite{Azzam-Tolsa}, we prove the following result:
\begin{theorem}
\label{Th:MainThm}
Let $\mu$ be a finite Radon measure in $\R^d$ with polynomial growth of degree $n$ and such that, for all balls $B\subset\R^d$ with radius $r(B)$,
\begin{equation}
\label{eq:MainThm}
\int_B \int_0^{r(B)}\beta^n_{\mu,2}(x,r)^2\theta^n_\mu(x,r)\frac{dr}{r}d\mu(x) \lesssim \mu(B).
\end{equation}
Then, all Calder\'{o}n-Zygmund operators $T_\mu$ with kernels in $\mathcal{K}^n(\R^d)$ are bounded in $L^2(\mu)$.
\end{theorem}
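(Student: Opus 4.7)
The plan is to adapt Tolsa's approach from \cite{Tolsa-pubMat} to higher dimensions, replacing the curvature-based corona construction used there by the $\beta$-based corona decomposition of Azzam-Tolsa. The argument splits naturally into three main stages.

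\emph{Corona decomposition.} First, I would invoke the Azzam-Tolsa corona decomposition which, under hypothesis \refeq{eq:MainThm}, partitions a lattice of dyadic cubes adapted to $\mu$ into a family of \emph{trees} $\Tree(R)$ indexed by a collection $\Top$ of top cubes. On each tree, the restriction of $\mu$ is close (in a quantitative sense measured by $\beta^n_{\mu,2}$ and $\theta^n_\mu$) to an $n$-AD-regular measure $\sigma_R$ supported on a Lipschitz $n$-graph $\Gamma_R$, and the family $\Top$ enjoys a Carleson-type packing estimate whose total mass is controlled by the left-hand side of \refeq{eq:MainThm} and hence, by hypothesis, by $\|\mu\|$.

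\emph{Single-tree boundedness.} Next, for each $R\in\Top$ I would define a localized operator $K_R$ capturing the action of $T$ on the martingale differences of $\mu$ inside $\Tree(R)$ together with the stopping-cube averages. Since $\sigma_R$ is $n$-AD-regular on a Lipschitz graph, David's theorem yields uniform $L^2(\sigma_R)$-boundedness of every singular integral with kernel in $\mathcal{K}^n(\R^d)$. Combining this with the $\beta$-controlled closeness of $\mu$ to $\sigma_R$ on $\Tree(R)$ and the smoothness of $K$, one transfers the bound to a uniform $L^2(\mu)$-estimate for $K_R$, with error terms that can be absorbed into the Carleson packing.

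\emph{Quasiorthogonality.} Finally one writes $T_\mu = \sum_R K_R$ (up to pieces controllable by the Carleson packing of $\Top$), estimates the off-diagonal products $\|K_R^*K_S\|$ and $\|K_R K_S^*\|$ for $R\neq S$, and applies a Cotlar-Stein type lemma. The oddness of $K$ is essential here, exactly as in \cite{Tolsa-pubMat}: it provides the cancellation that makes pieces associated to trees with nearly parallel approximating graphs almost orthogonal.

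The main obstacle I expect is the quasiorthogonality step. Tolsa's planar argument in \cite{Tolsa-pubMat} used the Menger curvature of $\mu$ as a remarkably clean way to encode the mutual geometry of two pieces, while in $\R^d$ no such tool is available, and cancellations between pairs of trees must be extracted directly from $\beta^n_{\mu,2}$, $\theta^n_\mu$, and the smoothness-plus-oddness of $K$. A secondary technical difficulty is that $\mu$ is only assumed to have polynomial growth and not $n$-AD-regularity, so the dyadic lattice, the stopping rules defining $\Top$, and the martingale differences must be set up carefully so that the extra density factor $\theta^n_\mu(x,r)$ appearing in \refeq{eq:MainThm} is absorbed into the Carleson-packing estimate for $\Top$ rather than contaminating the single-tree bounds.
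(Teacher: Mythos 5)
Your high-level outline --- corona decomposition from Azzam--Tolsa, boundedness on each tree via approximation by a rectifiable measure, then quasiorthogonality --- matches the paper's strategy. However, you are missing one decisive simplification and your third step, as written, would be substantially harder than what the paper (and Tolsa's model argument in \cite{Tolsa-pubMat}) actually does.

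The key reduction you omit is the passage through the non-homogeneous $T(1)$ theorem. The paper does not attempt to decompose the \emph{operator} $T_\mu$ and run a Cotlar--Stein argument; instead it proves the Main Lemma, which bounds the \emph{scalar} quantity $\|T\mu\|_{L^2(\mu)}^2$ by $\|\mu\| + \iint\beta^n_{\mu,2}(x,r)^2\theta^n_\mu(x,r)\tfrac{dr}{r}\,d\mu(x)$. Applying this to $\chi_B\mu$ for an arbitrary ball $B$ and feeding the result to the $T(1)$ theorem immediately yields $L^2(\mu)$-boundedness. This is what makes the whole argument tractable: one only needs to control a single function, not a family of operators. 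Accordingly, the decomposition $T\mu = \sum_{R\in\Top}K_R\mu$ is a decomposition of a function, and the ``quasiorthogonality'' step amounts to directly estimating $\sum_{R\neq R'}\langle K_R\mu, K_{R'}\mu\rangle_\mu$, not the operator products $\|K_R^*K_S\|$ that a Cotlar--Stein lemma would require. Estimating those operator norms would demand detailed geometric compatibility between different trees --- precisely the thing that is hard to extract in $\R^d$ --- and there is no indication it can be done.

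Your intuition about where the oddness enters is also off. It is not about cancellation between ``nearly parallel approximating graphs'' of distinct trees. The actual mechanism is local: after further decomposing each stopping cube into intermediate-scale cells $S$, antisymmetry gives $\int_S T_i(\chi_S\mu)\,d\mu = 0$, so one may replace $K_R\mu$ by $K_R\mu - K_R\mu(z_S)$ inside $S$ and exploit the Lipschitz-type regularity of $K_R\mu$ at the scale of $S$. The remaining boundary terms (the part of $T_i$ acting on $\R^d\setminus S$) are controlled via the small-boundary property of the David--Mattila cells, which is also what lets the author avoid the averaging over random dyadic lattices used in \cite{Tolsa-pubMat}. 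None of this is visible in your plan, and the Cotlar--Stein route you propose does not have an obvious substitute for it.
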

Notice that \refeq{eq:MainThm} is a quantitative version of \refeq{eq:Rectif}, just like \refeq{eq:UnifRect}, with no assumptions on the AD-regularity of $\mu$. A trivial example of a measure $\mu$ that is not $n$-AD-regular and satisfies \refeq{eq:MainThm} is the area measure on a square (with $d=2$ and $n=1$). Of course, the most interesting examples with regard to this result will arise from measures that have \emph{some $n$-dimensional nature} (e.g., measures supported on sets with Hausdorff dimension equal to $n$).
\par\medskip
When $n=d-1$, the previous result can be applied to get an interesting estimate for the Lipschitz harmonic capacity. Recall that the Lipschitz harmonic capacity of a compact set $E\subset\R^d$ is defined by
\begin{equation*}
\kappa(E) = \sup|\langle\Delta\varphi, 1\rangle|,
\end{equation*}
where the supremum is taken over all Lipschitz functions $\varphi\colon\R^d\rightarrow\R$ that are harmonic in $\R^{d}\setminus E$ with $||\nabla\varphi||_{\infty}\leq 1$. Here $\langle\Delta\varphi,1\rangle$ denotes the action of the compactly supported distributional Laplacian $\Delta\varphi$ on the function 1. This notion was introduced by Paramonov \cite{Paramonov} to study the problem of $C^1$ harmonic approximation on compact subsets of $\R^d$ and, as it was proved by Mattila and Paramonov in \cite{Mattila-Paramonov}, serves to characterize removable sets for Lipschitz harmonic functions as those sets $E$ with $\kappa(E)=0$. From Theorem \ref{Th:MainThm}, we obtain the following:

\begin{corollary}
\label{Cor:LipHarmCap}
Let $E$ be a compact set in $\R^{n+1}$. Then,
\begin{equation}
\label{Eq:LipHarmCap}
\kappa(E) \gtrsim \sup \mu(E),
\end{equation}
where the supremum is taken over all positive Borel measures $\mu$ supported on $E$ such that
\begin{equation}
\label{eq:SUPKAPPA}
\sup_{x\in\R^{n+1}, R>0}\left\{\theta^n_\mu(x,R)+\int_0^\infty\beta_{\mu,2}(x,r)^2\theta_\mu^n(x,r)\frac{dr}{r}\right\}\leq 1.
\end{equation}
\end{corollary}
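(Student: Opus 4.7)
The plan is to reduce the corollary to Theorem~\ref{Th:MainThm} combined with a known bridge between $L^2(\mu)$-boundedness of the codimension-one Riesz transform and the Lipschitz harmonic capacity $\kappa$.

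First I would verify that any $\mu$ admissible in the supremum satisfies the hypotheses of Theorem~\ref{Th:MainThm}. Condition \eqref{eq:SUPKAPPA} gives $\theta^n_\mu(x,R)\leq 1$ for every $x\in\R^{n+1}$ and $R>0$, so $\mu$ has polynomial growth of degree $n$ with constant $1$. Moreover, for a ball $B$ of radius $r(B)$, since the inner integral in \eqref{eq:SUPKAPPA} is at most $1$ uniformly in $x$,
\begin{equation*}
\int_B\int_0^{r(B)}\beta^n_{\mu,2}(x,r)^2\theta^n_\mu(x,r)\frac{dr}{r}d\mu(x)\leq \int_B 1\,d\mu(x)=\mu(B),
\end{equation*}
so \eqref{eq:MainThm} holds with an absolute implicit constant. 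Since the kernel $K(x)=x/|x|^{n+1}$ of the $n$-dimensional Riesz transform is odd, $\mathcal{C}^\infty(\R^{n+1}\setminus\{0\})$ and has $|\nabla^j K(x)|\lesssim|x|^{-(n+j)}$ for $j=0,1,2$, it lies in $\mathcal{K}^n(\R^{n+1})$; hence Theorem~\ref{Th:MainThm} yields an absolute bound $\|\Ri_\mu\|_{L^2(\mu)\to L^2(\mu)}\leq C_0$.

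Next I would invoke the well-established link (essentially due to Volberg, with ingredients from Nazarov--Treil--Volberg and Mattila--Paramonov) asserting that for $\mu$ as above with polynomial growth of degree $n$ and $\Ri_\mu$ bounded on $L^2(\mu)$, one has $\kappa(\supp\mu)\gtrsim\mu(E)$. The idea behind this implication is that in $\R^{n+1}$ the Riesz kernel $K(x)=c_n\,x/|x|^{n+1}$ is the gradient of the fundamental solution $\mathcal{E}$ of the Laplacian. So if one can produce a bounded density $b$ on a subset $F\subset E$ with $\mu(F)\gtrsim\mu(E)$ such that $\Ri(b\mathbf 1_F\mu)\in L^\infty(\R^{n+1})$, then $\varphi=c_n^{-1}\mathcal{E}*(b\mathbf 1_F\mu)$ is Lipschitz, harmonic off $E$, and satisfies $\langle\Delta\varphi,1\rangle=b\mu(F)\gtrsim\mu(E)$; after normalising $\|\nabla\varphi\|_\infty\leq 1$ one obtains $\kappa(E)\gtrsim\mu(E)$. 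The existence of such a density $b$ is the output of a standard non-homogeneous $T(b)$ / stopping-time construction, fed by the $L^2(\mu)$-boundedness of $\Ri_\mu$ together with Cotlar's inequality for the maximal truncated Riesz transform, both of which are available under our hypotheses.

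Taking the supremum over admissible $\mu$ then gives \eqref{Eq:LipHarmCap}. The only nontrivial step is the last one, namely quoting the appropriate form of Volberg's result to pass from $L^2(\mu)$-boundedness of $\Ri_\mu$ to a lower bound on $\kappa(E)$; I would expect the authors either to cite this directly or to give the short $T(b)$-based argument outlined above, since no further geometric input beyond Theorem~\ref{Th:MainThm} is needed.
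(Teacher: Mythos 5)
Your proposal is correct and follows essentially the same route as the paper: verify that \eqref{eq:SUPKAPPA} gives both the polynomial growth and the packing condition \eqref{eq:MainThm}, apply Theorem~\ref{Th:MainThm} to the Riesz kernel to get $\|\Ri_\mu\|_{L^2(\mu)\to L^2(\mu)}\lesssim 1$, and then invoke Volberg's characterization of $\kappa$ (the paper cites \cite[Lemma 5.15]{Volberg} directly rather than re-running the $T(b)$ argument you sketch, and rescales $\mu$ to an admissible multiple $\nu$). No gaps.
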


\par\medskip
A very interesting problem would be to show that, in fact, $\gtrsim$ may be substituted by $\approx$ in \refeq{Eq:LipHarmCap}, as an analog to the comparabilty between the analytic capacity $\gamma$ and the capacity $\gamma_+$ obtained by Tolsa in \cite{To:Acta}. This would serve to characterize removable sets for Lipschitz harmonic functions in a metric-geometric way and also to prove the bi-Lipschitz invariance of Lipschitz harmonic capacity, which is still unknown. Indeed, whenever a measure $\mu$ satisfies \refeq{eq:SUPKAPPA}, it is clear that it also satisfies \refeq{eq:MainThm} and then, arguing as in Section 8 of \cite{Tolsa-bilip}, one can prove that its image measure $\sigma = \varphi_{\#}\mu$ under a bi-Lipschitz map $\varphi$ satisfies
\begin{equation*}
\sigma(B)\leq C_\varphi r(B)^n
\end{equation*}
and
\begin{equation*}
\int_B \int_0^{r(B)}\beta_{\sigma,2}^n(x,r)^2\theta_{\sigma}^n(x,r)\frac{dr}{r}d\sigma(x)\lesssim C_\varphi \sigma(B),
\end{equation*}
for all balls $B$ of radius $r(B)$, where $C_\varphi$ is a positive constant only depending on the bi-Lipschitz constant of $\varphi$. Then, using Chebishev's inequality, one can prove that there exists an appropriate restriction $\tau$ of $\sigma$ with $||\tau|| \approx ||\sigma||$ and such that
\begin{equation*}
\sup_{x\in\R^{n+1}, R>0}\left\{\theta^n_\tau(x,R)+\int_0^\infty\beta_{\tau,2}(x,r)^2\theta_\tau^n(x,r)\frac{dr}{r}\right\}\leq C_\varphi.
\end{equation*}

\par\medskip
The plan of the paper is the following: in Section 2, we introduce some notation and recall some results that will be used throughout the text; the dyadic lattice of cells with small boundaries, constructed by David and Mattila, is introduced in Section 3, and the new Corona Decomposition by Azzam and Tolsa is introduced in Section 4; the proof of Theorem \ref{Th:MainThm}, in which we follow Tolsa's ideas \cite{Tolsa-pubMat} is carried out in Sections 5-9, and Corollary \ref{Cor:LipHarmCap} is proved in Section 10.
\section{Preliminaries}

\subsection{A useful estimate}
$ $
\par\medskip
Let $\mu$ be a positive Radon measure in $\R^d$ such that $\mu(B(x,r))\leq c_0r^n$ for all $x\in\R^d$ and all $r>0$. Then, for all $x\in\R^d$ and all $r>0$,
\begin{equation}
\label{Eq:CrecLineal}
\int_{|x-y|>r}\frac{d\mu(y)}{|x-y|^{n+1}} \leq \frac{c_0}{r}.
\end{equation}
This estimate, that can be easily proved by splitting the domain of integration into annuli $\{y\in\R^d\colon 2^k r <|y-x|\leq 2^{k+1}r\}$, $k\geq 0$ is commonly used in Calder\'{o}n-Zygmund theory, and we will also make use of it several times in this paper. 

\subsection{Notation}
\begin{itemize}
\item As it is usual in Harmonic Analysis, a letter $c$ will denote an absolute constant that may change its value at different occurrences. Constants with subscripts will retain their value at different occurrences. The notation $A\lesssim B$ means that there is a positive absolute constant $C$ such that $A\leq CB$, and $A\approx B$ is equivalent to $A\lesssim B\lesssim A$. 
\item If $B$ is a ball in $\R^d$, we denote its radius by $r(B)$. Given $\lambda>0$ the ball which is concentric with $B$ and has radius $\lambda r(B)$ is denoted by $\lambda B$.
\item If $\mu$ is a positive Radon measure in $\R^d$ and $B$ is a ball in $\R^d$, the average $n$-dimensional density of $B$ is 
\begin{equation*}
\theta^n_\mu(B) = \frac{\mu(B)}{r(B)^n},
\end{equation*}
so $\theta_{\mu}^n(B)=\theta_{\mu}^n(x,r)$ if $B=B(x,r)$. As $n$ will be fixed throughout the text, we will usually omit it to simplify the notation.
\item If $\mu$ is a Radon measure in $\R^d$ and $A\subset\R^d$, the restriction of $\mu$ to $A$ is denoted $\mu\lfloor_A$ or, simply, $\mu_A$, and it is defined by
\begin{equation*}
\mu\lfloor_A(E) = \mu(E\cap A).
\end{equation*}

\end{itemize}
\subsection{Suppressed operators}
$ $
\par\medskip
In this section, we recall the definition and most important properties of the so-called \emph{suppressed operators}, introduced by Nazarov, Treil and Volberg in \cite{NTV}, and that may be thought of as regular truncations of a Calder\'{o}n-Zygmund operator. All definitions and results in this section can be found in \cite{Volberg}.
\par\medskip
Let $k$ be an $n$-dimensional antisymmetric Calder\'{o}n-Zygmund kernel in $\R^d$. Given a non-negative and $1$-Lipschitz function $\Phi\colon\R^d\rightarrow\R$, we define
$$
k_\Phi(x,y)=k(x,y)\frac{1}{1+k(x,y)^2\Phi(x)^n\Phi(y)^n}.
$$

Then, $k_\Phi$ is also an antisymmetric Calder\'{o}n-Zygmund kernel, whose Calder\'{o}n-Zygmund constants do not depend on $\Phi$ but only on those of $k$, such that
\begin{enumerate}
\item $k_\Phi(x,y)=k(x,y)$ if $\Phi(x)\Phi(y)=0$.
\item $\displaystyle{|k_\Phi(x,y)|\leq c(n)\min\left\{\frac{1}{\Phi(x)^n},\frac{1}{\Phi(y)^n}\right\}}$.
\end{enumerate}

\par\medskip
We denote by $T_\Phi$ the integral operator associated to the kernel $k_\Phi$, that is, if $\nu$ is a signed Borel measure in $\R^d$ and $x\in \R^d$, 
$$
T_\Phi\nu(x) = \int k_\Phi(x,y)d\nu(y)
$$
whenever the integral makes sense. Naturally, we can also define the associated truncated operators
\begin{equation*}
T_{\Phi,\epsilon}\nu(x) = \int_{|x-y|>\epsilon}k_\Phi(x,y)d\nu(y)
\end{equation*}
and the maximal operator
\begin{equation*}
T_{\Phi,*}\nu(x) = \sup_{\epsilon>0}|T_{\Phi,\epsilon}\nu(x)|.
\end{equation*}
\par\medskip

We also introduce the maximal operator associated to $\Phi$
\begin{equation*}
M^r_\Phi \nu(x) = \sup_{r\geq \Phi (x)}\frac{|\nu|[B(x,r)]}{r^n}.
\end{equation*}

As usual, if $\sigma$ is any fixed positive Borel measure in $\R^d$, we can make these operators act on measures of the form $f\sigma$. To simplify notation, we denote, in such a case,
\begin{equation*}
T_{\sigma,\Phi}f = T_\Phi(f\sigma),\;\; T_{\sigma,\Phi,\epsilon}f = T_{\Phi,\epsilon}(f\sigma), \;\; M^r_{\sigma,\Phi}f = M^r_{\Phi}(f\sigma).
\end{equation*}
\begin{otherl} 
\label{Le:TruncadosYMaximal}
Let $\nu$ be a signed and finite Borel measure in $\R^d$ and $x\in \R^d$.
\begin{enumerate}
\item If $\epsilon > \Phi(x)$,
\begin{equation*}
|T_{\Phi,\epsilon}\nu(x)-T_{\epsilon}\nu(x)|\lesssim M^r_\Phi\nu(x).
\end{equation*}
\item If $\epsilon\leq\Phi(x)$,
\begin{equation*}
|T_{\Phi,\epsilon}\nu(x)-T_{\Phi,\Phi(x)}\nu(x)|\lesssim M^r_\Phi\nu(x).
\end{equation*}
\end{enumerate}
\end{otherl}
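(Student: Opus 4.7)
The plan is to reduce both parts to pointwise bounds on $k_\Phi$ (or on $k_\Phi-k$), combined with a standard annular decomposition that trades a power of $|x-y|$ for a factor $M^r_\Phi\nu(x)$. Part (2) is essentially immediate, while part (1) requires a little more care.

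For part (2), since $\epsilon\leq\Phi(x)$, the difference $T_{\Phi,\epsilon}\nu(x)-T_{\Phi,\Phi(x)}\nu(x)$ equals
\begin{equation*}
\int_{\epsilon<|x-y|\leq\Phi(x)} k_\Phi(x,y)\,d\nu(y).
\end{equation*}
Property (2) of the suppressed kernel gives $|k_\Phi(x,y)|\lesssim 1/\Phi(x)^n$, so the absolute value of the integral above is at most $c\,|\nu|(B(x,\Phi(x)))/\Phi(x)^n$. Taking $r=\Phi(x)$ in the supremum defining $M^r_\Phi\nu(x)$ shows that this is $\lesssim M^r_\Phi\nu(x)$.

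For part (1), I would start from the algebraic identity
\begin{equation*}
k_\Phi(x,y)-k(x,y)=-\frac{k(x,y)^3\Phi(x)^n\Phi(y)^n}{1+k(x,y)^2\Phi(x)^n\Phi(y)^n},
\end{equation*}
which, together with the CZ estimate $|k(x,y)|\leq c/|x-y|^n$, yields the crude bound $|k_\Phi(x,y)-k(x,y)|\lesssim \Phi(x)^n\Phi(y)^n/|x-y|^{3n}$. In the region of integration $|x-y|>\epsilon>\Phi(x)$, the $1$-Lipschitz property of $\Phi$ gives $\Phi(y)\leq \Phi(x)+|x-y|\leq 2|x-y|$, and therefore
\begin{equation*}
|k_\Phi(x,y)-k(x,y)|\lesssim \frac{\Phi(x)^n}{|x-y|^{2n}}.
\end{equation*}
Splitting $\{|x-y|>\epsilon\}$ into dyadic annuli $A_k=\{2^k\epsilon<|x-y|\leq 2^{k+1}\epsilon\}$ and using that each radius $2^{k+1}\epsilon\geq\epsilon>\Phi(x)$ lies in the supremum defining $M^r_\Phi\nu(x)$, I can bound $|\nu|(A_k)\leq (2^{k+1}\epsilon)^n M^r_\Phi\nu(x)$. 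Inserting this into the annular sum produces a geometric series in $2^{-kn}$, whose total is of order $\Phi(x)^n/\epsilon^n\cdot M^r_\Phi\nu(x)\leq M^r_\Phi\nu(x)$, the final inequality using $\Phi(x)<\epsilon$.

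The main technical point I expect to have to be careful with is balancing the powers of $|x-y|$ in part (1): the naive bound $|k_\Phi-k|\lesssim \Phi(x)^n\Phi(y)^n/|x-y|^{3n}$ is not by itself integrable against an $n$-growing measure, so one genuinely needs the Lipschitz estimate $\Phi(y)\leq 2|x-y|$ to bring the exponent down to $2n$ and leave exactly one surplus power $|x-y|^{-n}$ available for the annular $n$-growth coming from the maximal operator.
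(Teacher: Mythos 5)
Your proof is correct and follows essentially the same outline as the paper's: bound the pointwise difference $|k_\Phi(x,y)-k(x,y)|$ (or $|k_\Phi(x,y)|$ in part (2)), then integrate using an annular decomposition together with $\Phi(x)<\epsilon$ (resp.\ $\epsilon\le\Phi(x)$). Part (2) is identical to the paper's. In part (1) the only divergence is how you arrive at the intermediate bound $|k_\Phi(x,y)-k(x,y)|\lesssim \Phi(x)^n/|x-y|^{2n}$: you write $k_\Phi-k=-k^3\Phi(x)^n\Phi(y)^n/(1+k^2\Phi(x)^n\Phi(y)^n)$ and then control $\Phi(y)^n$ via the $1$-Lipschitz property, using $\Phi(y)\le\Phi(x)+|x-y|\le 2|x-y|$ in the region $|x-y|>\epsilon>\Phi(x)$; the paper instead writes $k_\Phi-k=-k^2\Phi(x)^n\Phi(y)^n\,k_\Phi$ and applies the already-stated suppressed-kernel bound $|k_\Phi(x,y)|\lesssim 1/\Phi(y)^n$ to absorb $\Phi(y)^n$, giving the same estimate one step quicker and without reference to the integration region. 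So your closing remark that the Lipschitz estimate is ``genuinely needed'' is a touch overstated --- the standard alternative is the listed property (2) of $k_\Phi$ --- but the argument you give is valid as written.
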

Finally, we state a Cotlar-type inequality that will be especially useful when dealing with suppressed operators $T_\Phi$. To do so, we introduce a couple more of maximal operators associated to any positive Radon measure $\sigma$ in $\R^d$: for $f\in L^1_{loc}(\sigma)$ and $x\in\R^d$,
\begin{equation*}
\tilde{M}_\sigma f(x) = \sup_{r>0}\frac{1}{\sigma[B(x,3r)]}\int_{B(x,r)}|f|d\sigma,\;\; \tilde{M}_{\sigma,\frac{3}{2}}f(x) = \sup_{r>0}\left(\frac{1}{\sigma[B(x,3r)]}\int_{B(x,r)}|f|^{\frac{3}{2}d\sigma}\right)^{\frac{2}{3}}.
\end{equation*}

\begin{other}
\label{Th:CotlarNTV}
Let $\sigma$ be a positive Radon measure in $\R^d$, and let, for $x\in\R^d$,
\begin{equation*}
\mathcal{R}(x) = \sup\{r>0\colon \sigma[B(x,r)]>C_0r^n\},
\end{equation*}
where $C_0>0$ is some fixed constant. Let $S$ be a singular integral operator with Calder\'{o}n-Zygmund kernel $s$, with
$$
|s(x,y)|\lesssim \min\left\{\frac{1}{\mathcal{R}(x)^n},\frac{1}{\mathcal{R}(y)^n}\right\}.
$$
and such that $S_\sigma$ is bounded in $L^2(\sigma)$. Then, for all $f\in L^1_{loc}(\sigma)$ and all $x\in\R^d$,
\begin{equation*}
S_*(f\sigma)(x) \lesssim \tilde{M}_\sigma(S(f\sigma))(x) + \tilde{M}_{\sigma,\frac{3}{2}}f(x).
\end{equation*}

\end{other}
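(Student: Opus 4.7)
The plan is to follow the classical non-doubling Cotlar-type argument of Nazarov, Treil and Volberg, now in the suppressed framework. Fix $x\in\R^d$, $f\in L^1_{loc}(\sigma)$ and $\epsilon>0$; I will bound $|S_\epsilon(f\sigma)(x)|$ by the right-hand side of the claimed inequality uniformly in $\epsilon$. The structural fact driving the proof is that, by the very definition of $\mathcal{R}(x)$, any radius $r>\mathcal{R}(x)$ automatically satisfies $\sigma(B(x,r))\le C_0 r^n$, so balls around $x$ of radius strictly larger than $\mathcal{R}(x)$ are polynomial-growth controlled.

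First I would pick a radius $r$ comparable to and strictly greater than $\max\{\epsilon,\mathcal{R}(x)\}$, set $B=B(x,r)$, and split $f=f_1+f_2$ with $f_1=f\chi_{2B}$. Since $r>\epsilon$ one has $S_\epsilon(f_2\sigma)(x)=S(f_2\sigma)(x)$, and since $3r>\mathcal{R}(x)$ one has $\sigma(3B)\lesssim r^n$. For $y\in B$, the Calder\'on-Zygmund smoothness of the kernel gives, after an annular decomposition of $(2B)^c$ and the polynomial-growth bound at scales $\ge r$,
$$
|S(f_2\sigma)(x)-S(f_2\sigma)(y)|\lesssim \tilde{M}_\sigma f(x).
$$
Writing $S(f_2\sigma)(y)=S(f\sigma)(y)-S(f_1\sigma)(y)$ and averaging $y$ over $B$ against $\sigma$, normalized by $\sigma(3B)$ (which is precisely why the three-dilated triple appears in the definition of $\tilde{M}_\sigma$), yields
$$
|S_\epsilon(f_2\sigma)(x)|\lesssim \tilde{M}_\sigma(S(f\sigma))(x) + \frac{1}{\sigma(3B)}\int_B|S(f_1\sigma)|\,d\sigma + \tilde{M}_\sigma f(x).
$$

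The remaining term involving $f_1$ is where the $L^2(\sigma)$-boundedness of $S_\sigma$ enters. Kolmogorov's inequality turns the strong $(2,2)$ bound into a weak-type estimate on sets of finite $\sigma$-measure, and a H\"older step then yields
$$
\frac{1}{\sigma(3B)}\int_B|S(f_1\sigma)|\,d\sigma \lesssim \left(\frac{1}{\sigma(3B)}\int_{2B}|f|^{3/2}\,d\sigma\right)^{2/3}\lesssim \tilde{M}_{\sigma,3/2}f(x),
$$
the exponent $3/2$ being exactly what Kolmogorov provides (any exponent strictly between $1$ and $2$ would do). The short-range piece $|S_\epsilon(f_1\sigma)(x)|$ is then handled directly: when $\epsilon\le C\mathcal{R}(x)$ by the suppression bound $|s(x,y)|\lesssim\mathcal{R}(x)^{-n}\approx r^{-n}$, and when $\epsilon>C\mathcal{R}(x)$ by the standard kernel size bound $|s(x,y)|\lesssim\epsilon^{-n}\approx r^{-n}$. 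In either case, H\"older together with $\sigma(6B)\lesssim r^n$ gives a contribution bounded by $\tilde{M}_{\sigma,3/2}f(x)$.

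The main obstacle is the non-doubling nature of $\sigma$: one cannot in general find an averaging ball $B$ on which both $\sigma(B)\approx r^n$ and $\sigma(2B)$ is polynomial-growth controlled. The two devices that overcome this are (i) the enlarged denominator $\sigma(3B)$ built into $\tilde{M}_\sigma$ and $\tilde{M}_{\sigma,3/2}$, which serves as a ``fake doubling'' substitute, and (ii) the upgraded exponent $3/2$, forced by Kolmogorov's inequality; the suppression hypothesis on $s$ is precisely what makes the choice $r$ close to $\mathcal{R}(x)$ viable, killing the otherwise uncontrolled mass of $\sigma$ on $B(x,\mathcal{R}(x))$, the only region where polynomial growth may fail.
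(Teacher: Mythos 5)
The paper does not prove this statement: it is quoted from Volberg's book \cite{Volberg} (see also NTV), and the section where it appears explicitly says that all results on suppressed operators can be found there. So there is no ``paper's own proof'' to compare against; the relevant comparison is with the standard proof in the literature.

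Your sketch follows the right overall outline (split $f=f_1+f_2$ at a scale $r\approx\max\{\epsilon,\mathcal{R}(x)\}$, use CZ regularity for the far part, use $L^2$-boundedness plus an interpolation/Kolmogorov step for the near part), but there is a genuine gap at the averaging step. When you integrate the pointwise inequality
\begin{equation*}
|S(f_2\sigma)(x)|\le |S(f_2\sigma)(x)-S(f_2\sigma)(y)|+|S(f\sigma)(y)|+|S(f_1\sigma)(y)|
\end{equation*}
against $\chi_B(y)\,d\sigma(y)/\sigma(3B)$, the constant on the left becomes $\sigma(B)/\sigma(3B)$, not $1$; so the step only yields
\begin{equation*}
\frac{\sigma(B)}{\sigma(3B)}\,|S(f_2\sigma)(x)|\ \lesssim\ \tilde{M}_\sigma(S(f\sigma))(x)+\frac{1}{\sigma(3B)}\int_B|S(f_1\sigma)|\,d\sigma+\tilde{M}_\sigma f(x),
\end{equation*}
and since $\sigma$ is only assumed to have polynomial growth (not doubling, not AD-regular), the ratio $\sigma(B)/\sigma(3B)$ can be arbitrarily small. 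The $3B$ in the denominator of $\tilde{M}_\sigma$ is \emph{not} a ``fake doubling substitute'' for this purpose: it is what makes $\tilde{M}_\sigma$ itself weakly bounded, but it does not remove the factor $\sigma(B)/\sigma(3B)$ from the averaging. The same issue contaminates the near-part estimate: controlling $\sigma(3B)^{-1}\int_B|S(f_1\sigma)|\,d\sigma$ by $\tilde{M}_{\sigma,3/2}f(x)$ requires a comparison of $\sigma(B)$, $\sigma(3B)$, and $\sigma(6B)$ that again needs some doubling.

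The device that the correct proof (NTV/Volberg/Tolsa) uses and your sketch omits is the ``doubling-ball'' selection: because $\sigma(B(x,\rho))\le C_0\rho^n$ for all $\rho\ge r$, the radii $3^jr$ cannot all fail the quasi-doubling inequality $\sigma(B(x,3^{j+1}r))\le C\,3^n\,\sigma(B(x,3^jr))$; one passes to the first such quasi-doubling scale, performs the near/far split there, and then carefully accounts for the intermediate annulus between the original and the enlarged scale. Without this step the Chebyshev/averaging argument does not close, and your proof as written is incomplete.
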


\section{The dyadic lattice of cells with small boundaries}
We will use the dyadic lattice of cells with small boundaries constructed by David and Mattila in \cite[Theorem 3.2]{David-Mattila}.
The properties of this dyadic lattice are summarized in the next lemma.

\par\medskip

\begin{otherl}[David, Mattila]
\label{Le:DyadicGrid}
Let $\mu$ be a Radon measure on $\R^d$, $E=\supp(\mu)$, and consider two constants $C_0>1$ and $A_0>5000\,C_0$. Then, there exists a sequence $\{\D_k\}_{k=0}^{\infty}$ of families of Borel subsets of $E$ with the following properties:
\begin{itemize}
\item For each integer $k\geq0$, $\D_k$ is a partition of $E$, that is, the sets $Q\in\D_k$ are pairwise disjoint and 
$$
\bigcup_{Q\in\D_k}Q = E.
$$
\par\medskip

\item If $k,l$ are integers, $0\leq k < l$, $Q\in\D_k$ and $R\in\D_l$, then either $R\subset Q$ or $Q\cap R = \emptyset$.

\item The general position of the cells $Q$ can be described as follows: for each $k\geq 0$ and each cell $Q\in\D_k$, there is a ball $B(Q)=B(z_Q,r(Q))$ such that
$$
z_Q\in E, \qquad A_0^{-k}\leq r(Q)\leq C_0\,A_0^{-k}, \qquad E\cap B(Q)\subset Q\subset E\cap 28\,B(Q),
$$
where the balls $5B(Q)$, $Q\in\D_k$, are pairwise disjoint.

\par\medskip
\item The cells $Q\in\D_k$ have small boundaries, that is, for each $Q\in\D_k$ and each
integer $l\geq0$, set
$$N_l^{ext}(Q)= \{x\in E\setminus Q:\,\dist(x,Q)< A_0^{-k-l}\},$$
$$N_l^{int}(Q)= \{x\in Q:\,\dist(x,E\setminus Q)< A_0^{-k-l}\},$$
and
$$N_l(Q)= N_l^{ext}(Q) \cup N_l^{int}(Q).$$
Then
\begin{equation}\label{eqsmb2}
\mu(N_l(Q))\leq (C^{-1}C_0^{-3d-1}A_0)^{-l}\,\mu(90B(Q)).
\end{equation}
\par\medskip

\item Denote by $\D_k^{db}$ the family of cells $Q\in\D_k$ for which
\begin{equation}\label{eqdob22}
\mu(100B(Q))\leq C_0\,\mu(B(Q)).
\end{equation}
Then, for all $Q\in\D_k\setminus\D_k^{db}$ , we have that $r(Q)=A_0^{-k}$ and $\mu[100B(Q)]\leq C_0^{-l}\mu[100^{l+1}B(Q)]$ for all $l\geq 1$ such that $100^l\leq C_0$.
\end{itemize}
\end{otherl}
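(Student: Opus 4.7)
The plan is to adapt Christ's dyadic-cube construction on a space of homogeneous type, with two features beyond the classical version: for each cell one has flexibility in choosing an effective radius $r(Q) \in [A_0^{-k}, C_0 A_0^{-k}]$, which will be exploited both to enforce the small-boundary estimate \refeq{eqsmb2} and to maximize the number of doubling cells; and the base separation of the net underlying the construction is tuned so that the inclusion $Q \subset 28 B(Q)$ and the disjointness of $5 B(Q)$ coexist.

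At each scale $k \geq 0$, pick a maximal $cA_0^{-k}$-separated net $Z_k \subset E$, where $c$ is a constant depending on $C_0$ chosen large enough for the disjointness requirement. For each $z \in Z_{k+1}$ assign a parent $\pi(z) \in Z_k$ equal to any nearest point, producing a rooted tree on the centers. The raw cell $\tilde Q_z^k$ at scale $k$ is the set of $x \in E$ whose chain of nearest centers at all scales $\geq k$ terminates upward at $z$, with ties broken via ancestry. This yields a nested sequence of partitions; after choosing $r(Q)$ in the range $[A_0^{-k}, C_0 A_0^{-k}]$ in the next step, the containments $B(Q)\cap E \subset Q \subset 28 B(Q)\cap E$ and the pairwise disjointness of $5 B(Q)$ follow from the choice of $c$ and the hypothesis $A_0 > 5000\,C_0$.

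The main obstacle is \refeq{eqsmb2}. Since the raw Voronoi interface between siblings may carry large $\mu$-mass, I would replace it by a smooth boundary: subdivide each parent cell into children using spheres of variable radius $r(Q) \in [A_0^{-k}, C_0 A_0^{-k}]$ centered at $z_Q$. Denoting by $\Sigma(z, t, \delta) = \{y \in \R^d : \bigl|\,|y - z| - t\,\bigr| < \delta\}$ the spherical shell of radius $t$ and thickness $2\delta$, Fubini gives for any fixed $l \geq 0$
$$
\int_{A_0^{-k}}^{C_0 A_0^{-k}} \mu\bigl(\Sigma(z_Q, t, A_0^{-k-l}) \cap 90 B(Q)\bigr)\, dt \;\lesssim\; A_0^{-k-l}\, \mu(90 B(Q)),
$$
so on a subset of radii $t$ of large Lebesgue measure the shell has $\mu$-mass $\lesssim A_0^{-l}\, \mu(90 B(Q))$. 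A weighted pigeonhole argument summing a decaying weight in $l$, exploiting the gap between $A_0$ and $C_0$, produces a single radius $r(Q)$ yielding the stated decay $(C^{-1} C_0^{-3d-1} A_0)^{-l}$ uniformly in $l$. The radii are chosen top-down through the tree so that each child's sphere is compatible with the already-fixed boundary of its parent; the total ``skin'' of a cell at depth $l$ is controlled by the union of its own shell and of inherited shells from ancestors, each contributing at the corresponding rate.

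Finally, the non-doubling structural statement is a direct by-product of the greedy choice of $r(Q)$: among the admissible radii in $[A_0^{-k}, C_0 A_0^{-k}]$, pick $r(Q) > A_0^{-k}$ to make $Q$ doubling whenever possible. If no such choice works, $r(Q) = A_0^{-k}$ is forced, and the failure of the doubling inequality $\mu(100 B(z_Q, r)) \leq C_0 \mu(B(z_Q, r))$ on every concentric ball in the window $100^l \leq C_0$ rearranges at once to $\mu(100 B(Q)) \leq C_0^{-l}\, \mu(100^{l+1} B(Q))$, giving the last bullet point.
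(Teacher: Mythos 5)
This statement is not proved in the paper: it is Theorem~3.2 of David and Mattila, cited and quoted verbatim. There is therefore no internal proof to compare against, and your sketch has to be judged on its own terms.

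The broad outline you give -- a Christ-style net construction combined with a choice of a variable radius $r(Q)\in[A_0^{-k},C_0A_0^{-k}]$ that is selected by a Markov/pigeonhole argument to make each level-$l$ shell thin, together with a greedy preference for radii that make the cell doubling -- is indeed the spirit of David and Mattila's proof, and the Fubini computation you display (each $y$ contributes to shells over a $t$-interval of length $2A_0^{-k-l}$, so the bad set of radii at level $l$ has Lebesgue measure $\lesssim A_0^{-k}(C^{-1}C_0^{-3d-1})^l$, which sums to less than $(C_0-1)A_0^{-k}$) is correct and is the heart of the small-boundary estimate. However, the sketch has three genuine gaps. First, you never explain how a cell is actually \emph{defined} from the chosen spheres. ``Replace the Voronoi interface by a sphere'' is not a construction: a sphere around $z_Q$ does not by itself separate $Q$ from its siblings, and you must explain how to cut $E$ into disjoint cells that are simultaneously (i) nested across scales, (ii) contain $B(Q)\cap E$ and are contained in $28B(Q)$, and (iii) have boundary contained, at every level $l$, in the chosen shells. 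This bookkeeping is precisely the nontrivial combinatorial core of David--Mattila and it is asserted rather than carried out. Second, the radius $r(Q)$ is being asked to do two jobs at once: satisfy the small-boundary estimate for all $l\geq 0$ simultaneously, \emph{and} be a doubling radius when one exists. Your pigeonhole shows the good set for the first job has positive measure, and the greedy step asks for a point in the good set for the second job, but you never verify that the two good sets intersect; the actual construction must pick the radius from the intersection. Third, the last bullet is not an immediate rearrangement: to get $\mu(100B(Q))\leq C_0^{-l}\mu(100^{l+1}B(Q))$ you need the doubling inequality to fail at each radius $100^{j}A_0^{-k}$ for $j=0,\dots,l-1$, which requires that each of these radii lies in the window $[A_0^{-k},C_0A_0^{-k}]$ \emph{and} that failure was observed at each of them during the greedy pass, not just at the endpoint $r(Q)=A_0^{-k}$. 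As written, the sentence collapses a chain argument into a single step. None of these gaps is necessarily fatal, but the first two are exactly where the real work in David--Mattila's Theorem~3.2 lives, and the sketch does not address them.
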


\par\medskip
We use the notation $\D=\bigcup_{k\geq0}\D_k$. For $Q\in\D$, we set $\D(Q) =\{P\in\D:P\subset Q\}$.
Given $Q\in\D_k$, we denote $J(Q)=k$. We set $\l(Q)= 56\,C_0\,A_0^{-k}=\l_k$ and we call it the side length of $Q$. Note that 
$$
\frac1{28}\,C_0^{-1}\l(Q)\leq \diam(Q)\leq\l(Q).
$$
Observe that $r(Q)\approx\diam(Q)\approx\l(Q)$. In addition, we call $z_Q$ the center of $Q$, and we call the cell $Q'\in \D_{k-1}$ such that $Q'\supset Q$ the parent of $Q$. We set $B_Q=28\,B(Q)$, so that 
$$
E\cap \tfrac1{28}B_Q\subset Q\subset B_Q.
$$
\par\medskip
We assume $A_0$ to be big enough so that the constant $C^{-1}C_0^{-3d-1}A_0$ in 
\refeq{eqsmb2} satisfies 
$$C^{-1}C_0^{-3d-1}A_0>A_0^{1/2}>10.$$
Then we deduce that, for all $0<\lambda\leq1$,
\begin{equation}\label{eqfk490}
\mu\bigl(\{x\in Q:\dist(x,E\setminus Q)\leq \lambda\,\ell(Q)\}\bigr) + 
\mu\bigl(\bigl\{x\in 4B_Q\setminus Q:\dist(x,Q)\leq \lambda\,\ell(Q)\}\bigr)\leq
c\,\lambda^{1/2}\,\mu(3.5B_Q).
\end{equation}

We denote
$\D^{db}=\bigcup_{k\geq0}\D_k^{db}$ and $\D^{db}(Q) = \D^{db}\cap \D(Q)$.
Note that, in particular, from \refeq{eqdob22} we obtain
$$\mu(100B(Q))\leq C_0\,\mu(Q)\qquad\mbox{if $Q\in\D^{db}.$}$$
For this reason we will call the cells from $\D^{db}$ doubling. 

As shown in \cite[Lemma 5.28]{David-Mattila}, any cell $R\in\D$ can be covered $\mu$-a.e.\
by a family of doubling cells:
\par\medskip

\begin{otherl}
\label{Le:CubosCubiertosPorDoblantes}
Let $R\in\D$. Suppose that the constants $A_0$ and $C_0$ in Lemma \ref{Le:DyadicGrid} are
chosen suitably. Then there exists a family of
doubling cells $\{Q_i\}_{i\in I}\subset \D^{db}$, with
$Q_i\subset R$ for all $i$, such that their union covers $\mu$-almost all $R$.
\end{otherl}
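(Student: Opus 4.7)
The plan is to show that
$$Z := \{x \in R : R_k(x) \notin \D_k^{db} \text{ for every } k \geq J(R)\}$$
has $\mu(Z) = 0$, where $R_k(x)$ denotes the unique level-$k$ cell in $\D(R)$ containing $x$. Once this is done, the desired family $\{Q_i\}_{i \in I}$ is produced by selecting, for each $x \in R \setminus Z$, the $\D$-maximal doubling cell in $\D^{db}(R)$ that contains $x$; by the tree structure of $\D$, these form a pairwise disjoint subfamily of $\D^{db}(R)$ whose union contains $R \setminus Z$. I may assume $R \notin \D^{db}$, since otherwise $\{R\}$ suffices.

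The key tool is an iteration of the mass-growth property of non-doubling cells from the last bullet of Lemma \ref{Le:DyadicGrid}. Set $l_* := \lfloor \log_{100} C_0 \rfloor$; for every $Q \in \D_k \setminus \D_k^{db}$ we have $r(Q) = A_0^{-k}$ and
$$\mu(100 B(Q)) \leq C_0^{-l_*}\, \mu(100^{l_*+1} B(Q)).$$
For $x \in Z$, put $Q_j := R_{J(R)+j}(x)$; each $Q_j$ is non-doubling, so $r(Q_j) = A_0^{-(J(R)+j)}$. From $Q_{j+1} \subset Q_j \subset 28 B(Q_j)$ we get $|z_{Q_{j+1}} - z_{Q_j}| \leq 28\, r(Q_j)$, and since $A_0 > 5000\, C_0 \geq 50 \cdot 100^{l_*+1}$ the estimate $28 + 100^{l_*+1}/A_0 < 100$ forces
$$100^{l_*+1} B(Q_{j+1}) \subset 100 B(Q_j).$$
Thus $\mu(100 B(Q_{j+1})) \leq C_0^{-l_*}\, \mu(100 B(Q_j))$, and iterating from $Q_0 = R$ together with polynomial growth,
$$\mu(100 B(Q_j)) \leq c_0\, 100^n\, C_0^{-l_* j}\, r(R)^n.$$

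To turn this pointwise decay into a bound on $\mu(Z)$, let $\mathcal{F}_j$ denote the family of $Q \in \D_{J(R)+j}(R)$ all of whose $\D$-ancestors in $R$ are non-doubling, so $Z \subset \bigcup_{Q \in \mathcal{F}_j} Q$. Each such $Q$ has $r(Q) = A_0^{-(J(R)+j)}$, the balls $\{5 B(Q)\}_{Q \in \mathcal{F}_j}$ are pairwise disjoint with common radius, and all lie inside $B(z_R, 33\, r(R))$, so a volume count gives $|\mathcal{F}_j| \leq C\, C_0^d\, A_0^{d j}$. Combining,
$$\mu(Z) \leq \sum_{Q \in \mathcal{F}_j} \mu(Q) \leq |\mathcal{F}_j| \cdot \max_{Q \in \mathcal{F}_j} \mu(100 B(Q)) \leq C'\, r(R)^n \bigl(A_0^d / C_0^{l_*}\bigr)^j.$$
Choosing $C_0$ so large that $l_* > d$ and $C_0^{l_* - d} > 5000^d$ (which is compatible with $A_0 > 5000\, C_0$ because $C_0^{l_*}$ grows super-polynomially in $C_0$ while $A_0^d = O(C_0^d)$), we get $A_0^d < C_0^{l_*}$, and letting $j \to \infty$ yields $\mu(Z) = 0$.

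The delicate point is the balance of constants: the chaining step requires $A_0$ large compared to $100^{l_*+1}$, while the geometric decay requires $A_0^d$ small compared to $C_0^{l_*}$. These demands are reconciled by first fixing $C_0$ very large (so that $l_* = \lfloor \log_{100} C_0 \rfloor$ is correspondingly large) and then setting $A_0$ just above the threshold $5000\, C_0$; the super-polynomial growth of $C_0^{l_*}$ beats the polynomial $A_0^d$, which is what makes the lemma work under the allowed choice of parameters.
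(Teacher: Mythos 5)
Your argument is correct and reconstructs the proof of \cite[Lemma 5.28]{David-Mattila}, which the paper cites without reproducing: iterate the mass-growth estimate of the last bullet of Lemma \ref{Le:DyadicGrid} along any chain of non-doubling cells containing a given point, conclude that the set $Z$ of points of $R$ lying in no doubling cell of $\D(R)$ is $\mu$-null, and then take the maximal doubling cells in $\D(R)$. One inaccuracy in your closing discussion of constants: the paper in fact requires $A_0\gtrsim C_0^{6d+2}$ (so that $C^{-1}C_0^{-3d-1}A_0>A_0^{1/2}$, needed for the thin-boundary estimate \refeq{eqfk490}), not merely $A_0$ just above $5000\,C_0$, so $A_0^d$ is not $O(C_0^d)$ but rather a much higher power of $C_0$; this does not break your argument, since $l_*=\lfloor\log_{100}C_0\rfloor$ grows with $C_0$ and hence $C_0^{l_*}$ is super-polynomial in $C_0$, so one simply needs to take $C_0$ large enough (depending on $d$) that $l_*>d(6d+2)$, after which the geometric decay $\mu(Z)\lesssim (A_0^d/C_0^{l_*})^j\,r(R)^n\to 0$ goes through exactly as you wrote.
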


\par\medskip
From now on we will assume that $C_0$ and $A_0$ are some big fixed constants so that the
results stated in the lemmas of this section hold.

\section{The corona decomposition}
Let $\mu$ be any measure satisfying the same hypotheses as the one in Theorem \ref{Th:MainThm} (e.g., the restriction of the measure $\mu$ presented there to any ball $B$) and construct the dyadic lattice $\D$ of cells with small boundaries associated to $\mu$ that is given by Lemma \ref{Le:DyadicGrid}. Let $R_0\in\D$ be such that $\supp(\mu)\subset R_0$ and $\diam(\supp(\mu))\leq \l(R_0)$ (we can assume, without loss of generality, that $\D_0 = \{R_0\}$), and let $\Top$ be a family of doubling cells contained in $R_0$ and such that $R_0\in\Top$ that we will fix below.
\par\medskip
For every $R\in\Top$, denote by $\Stop(R)$ the family of maximal cells $Q\in\Top$ that are contained in $R$, and by $\Tree(R)$ the family of cells $Q\in\mathcal{D}$ that are contained in $R$ and not contained in any $Q'\in\Stop(Q)$. Then, we define
\begin{equation*}
\Good(R) = R\setminus \bigcup_{Q\in\Stop(R)}Q
\end{equation*}
and, for $Q\subset R$,
\begin{equation*}
\dmu(Q,R)=\int_{2B_R\setminus Q}\frac{d\mu(y)}{|y-z_Q|^{n}}.
\end{equation*}
\par\medskip
The arguments of Azzam and Tolsa \cite[Lemma 7.2]{Azzam-Tolsa} can be easily adapted to prove the following:
\begin{otherl}
\label{Th:CoronaDesc}
There exists a family $\Top\subset \D^{db}$ as above such that, for all $R\in\Top$, there exists a bi-Lipschitz injection $g_R\colon\R^n\rightarrow\R^d$ with the bi-Lipschitz constant bounded above by some absolute constant and with image $\Gamma_R=g(\R^n)$ such that
\begin{enumerate}
\item $\mu$-almost all $\Good(R)$ is contained in $\Gamma_R$.
\item For all $Q\in\Stop(R)$ there exists another cell $\tilde{Q}\in\D(R)$ with $Q\subset \tilde{Q}$ such that $\dmu(Q,\tilde{Q})\leq c\,\theta_\mu(B_R)$ and $B_{\tilde{Q}}\cap \Gamma_R \neq \emptyset$.
\item For all $Q\in\Tree(R)$, $\theta_{\mu}(1.1B_Q)\leq c\,\theta_{\mu}(B_R)$.
\end{enumerate}
Furthermore, the cells $R\in\Top$ satisfy the following packing condition:
\begin{equation*}
\sum_{R\in\Top}\theta_{\mu}(B_R)^2\mu(R)\lesssim \theta_{\mu}(B_{R_0})^2\mu(R_0)+\iint_{0}^{\l(R_0)}\beta^n_{\mu, 2}(x,r)^2\theta^n_{\mu}(x,r)\frac{dr}{r}d\mu(x).
\end{equation*}
\end{otherl}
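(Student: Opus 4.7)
The plan is to construct $\Top$ recursively from the top down, using stopping conditions that trigger exactly when continuing inside a tree would break one of properties (1)--(3). I start by putting $R_0$ in $\Top$. Assuming $R\in\Top$ has been selected, I define $\Stop(R)$ as the family of maximal doubling descendants $Q\subsetneq R$ of $R$ (which exist by Lemma \ref{Le:CubosCubiertosPorDoblantes}) meeting at least one of three criteria: (HD) \emph{high density}, $\theta_\mu(1.1B_Q)>A\,\theta_\mu(B_R)$ for a large constant $A$ to be fixed; (BA) \emph{bad angle}, the best-fit $n$-plane $L_Q$ at the scale of $Q$ has turned by more than a small constant from the best-fit plane $L_R$ at the top of the tree; and (BG) \emph{big $\beta$-sum}, the accumulated quantity $\sum_{Q\subsetneq Q'\subset R}\beta^n_{\mu,2}(CB_{Q'})^2\,\theta_\mu(CB_{Q'})$ exceeds a fixed small threshold. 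I then add each element of $\Stop(R)$ to $\Top$ and iterate. Property (3) of the lemma is immediate from the failure of (HD) on $\Tree(R)$.

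For each $R\in\Top$, the surface $\Gamma_R$ is built via a David--Toro style patching of the planes $\{L_Q\}_{Q\in\Tree(R)}$: since (BA) fails throughout the tree, the planes lie in a narrow angular neighborhood of $L_R$, and since (BG) fails, the total squared perturbation between planes at consecutive scales is bounded. A standard construction then yields a Lipschitz graph $\Gamma_R$ over $L_R$ admitting a bi-Lipschitz parametrization $g_R\colon\R^n\to\R^d$ whose constants are absolute. Property (1) follows because $\mu$-almost every point of $\Good(R)$ lies in a nested chain of tree cells of arbitrarily small diameter, hence at distance $\lesssim\l(Q)\beta^n_{\mu,2}(CB_Q)\to 0$ from the planes $L_Q$, which themselves converge to $\Gamma_R$. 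For (2), given $Q\in\Stop(R)$ I take $\tilde Q$ to be a suitable ancestor of $Q$ still in $\Tree(R)$: the bound $\dmu(Q,\tilde Q)\lesssim\theta_\mu(B_R)$ comes from the polynomial growth of $\mu$ combined with the failure of (HD) for every cell properly containing $Q$ in the tree (so $\theta_\mu$ is uniformly bounded on $B_{\tilde Q}$), while $B_{\tilde Q}\cap\Gamma_R\neq\emptyset$ holds because $z_{\tilde Q}$ is close to $L_{\tilde Q}$, which in turn is close to $\Gamma_R$.

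Finally, the packing estimate is proved by splitting $\Top$ according to the reason for generation. Cells triggered by (BG) telescope against the hypothesis \refeq{eq:MainThm}: each such cell consumes a definite share of $\iint\beta^n_{\mu,2}(x,r)^2\theta^n_\mu(x,r)\tfrac{dr}{r}d\mu(x)$ before being declared stopped, and a Fubini / geometric-scale argument shows this integral dominates $\sum_R\theta_\mu(B_R)^2\mu(R)$ over the (BG) family. Cells triggered by (BA) are handled analogously, since a definite rotation of $L_Q$ relative to $L_R$ forces a lower bound on some $\beta^n_{\mu,2}(CB_{Q'})$ at an intermediate scale. HD cells are packed by a standard generational/Chebyshev argument, using polynomial growth together with the fact that each HD event multiplies $\theta_\mu$ by a factor $A$, so $\sum_{R\in\Top_{\mathrm{HD}}}\theta_\mu(B_R)^2\mu(R)\lesssim\theta_\mu(B_{R_0})^2\mu(R_0)$. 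The principal obstacle is the calibration of the three thresholds so that (BA) and (BG) remain small enough to keep $g_R$ bi-Lipschitz with absolute constants, yet (BG) does not fire so eagerly that its packing overruns the hypothesized $\beta^2\theta\,dr/r$ integral. This trade-off is where the adaptation of Azzam--Tolsa to arbitrary codimension and non-AD-regular $\mu$ requires most care, since one can no longer use a uniform lower bound on $\theta_\mu$ to absorb errors.
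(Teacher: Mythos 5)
The paper does not actually prove this lemma: it invokes Azzam--Tolsa \cite[Lemma 7.2]{Azzam-Tolsa} and asserts that the argument there adapts with minor changes, so the ``proof'' in the paper is a one-line citation. You are in effect trying to reconstruct that argument from scratch, and your high-level scheme -- recursive stopping with high-density, angle/tilt, and big-$\beta$-sum conditions; David--Toro patching to build $\Gamma_R$; splitting the packing sum by stopping type -- is the right family of ideas. Property (3) does follow immediately from the failure of (HD), and packing the $\beta$-triggered cells against the hypothesis \refeq{eq:MainThm} via Fubini over scales is the correct mechanism.

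There is, however, a genuine gap: you omit the \emph{low-density} (LD) stopping condition, and in the non-AD-regular setting this is indispensable. The coefficient $\beta^n_{\mu,2}(B)$ is normalized by $r(B)^n$ rather than by $\mu(B)$, so on a ball where $\theta_\mu(B)$ is tiny, $\beta^n_{\mu,2}(B)$ can be tiny \emph{even though the support of $\mu$ is nowhere near any $n$-plane}: the low density simply dampens the contribution of the outlying mass. Consequently, without an LD stop, the ``best-fit'' planes $L_Q$ at low-density cells are not quantitatively stable, neither the (BA) comparison of $L_Q$ with $L_R$ nor the David--Toro patching is controlled, and your claim that points of $\Good(R)$ lie at distance $\lesssim\l(Q)\beta^n_{\mu,2}(CB_Q)\to 0$ from $\Gamma_R$ fails for chains passing through low-density cells. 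The LD condition (together with restricting $\Top$ to $\D^{db}$) is precisely what protects against this, and packing the LD cells is a substantial part of the work in \cite{Azzam-Tolsa}. Relatedly, your HD packing is too coarse to close: the observation that each HD stop multiplies $\theta_\mu$ by $A$ gives $\theta_\mu(B_Q)\geq A^k\theta_\mu(B_{R_0})$ for a $k$-th HD-generation cell $Q$ and hence a bound $k\lesssim\log_A\bigl(c_0/\theta_\mu(B_{R_0})\bigr)$ on the depth, but summing $\theta_\mu(B_Q)^2\mu(Q)\leq c_0^2\mu(Q)$ over disjoint cells at each generation only yields $c_0^2\mu(R_0)\log_A\bigl(c_0/\theta_\mu(B_{R_0})\bigr)$, which is not $\lesssim\theta_\mu(B_{R_0})^2\mu(R_0)$ when $\theta_\mu(B_{R_0})$ is small. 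A correct packing of HD cells must exploit the geometric gain between consecutive HD stops and in practice interacts with the LD stopping; calibrating the two thresholds so the surface $\Gamma_R$ stays bi-Lipschitz while the packing remains summable is exactly where the adaptation to non-AD-regular $\mu$ is delicate, and your sketch passes over it.
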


\section{The main lemma}
For technical reasons, we will assume that the kernel $k$ of $T$ is not only in $\mathcal{K}^n(\R^d)$, but that it is also a bounded function, so that the definition of $T\mu(x)$ makes perfect sense for all $x\in\R^d$ if $\mu$ is a finite and compactly supported Borel measure in $\R^d$, which is the case we are considering. However, as all of our estimates will be independent of the $L^{\infty}$ norm of $k$, our result can be easily extended for general Calder\'{o}n-Zygmund kernels $k\in\mathcal{K}^n(\R^d)$ by a standard smoothing procedure (see, for example, equation $(44)$ in \cite{Tolsa-LittlewoodPaley}). 

\par\medskip

The following sections will be devoted to proving this result:

\begin{lemma}[Main Lemma]
\label{Le:MainLemma}
Let $\mu$ be a positive Radon measure in $\R^d$  with compact support and polynomial growth of degree $n$. Then,
\begin{equation*}
||T\mu||^2_{L^2(\mu)}\lesssim ||\mu|| + \iint\beta^n_{\mu,2}(x,r)^2\theta^n_\mu(x,r)\frac{dr}{r}d\mu(x).
\end{equation*}
\end{lemma}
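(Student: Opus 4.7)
The plan is to follow the strategy that Tolsa used in \cite{Tolsa-pubMat} for the Cauchy transform case, adapted to higher dimensions via the corona decomposition of Lemma \ref{Th:CoronaDesc}. Since we may replace $\mu$ by its restriction to $R_0$ and the kernel is assumed bounded, $T\mu$ is defined pointwise. The heart of the argument is to split $T\mu$ into pieces $K_R\mu$ indexed by the top cells $R\in\Top$ of the corona, show that each piece has $L^2(\mu)$-norm squared comparable to $\theta_\mu(B_R)^2\,\mu(R)$, prove a quasi-orthogonality estimate of the form $\|T\mu\|_{L^2(\mu)}^2\lesssim \|\mu\|+\sum_R\|K_R\mu\|_{L^2(\mu)}^2$, and finally sum via the packing condition supplied by Lemma \ref{Th:CoronaDesc}.

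More concretely, for each $R\in\Top$ I would first build a ``regularized'' Lipschitz function $\Phi_R$ whose zero set is $\Gamma_R$ and whose value at a point $x$ measures, roughly, the distance from $x$ to the nearest stopping cell $Q\in\Stop(R)$ together with $\ell(Q)$. This is the natural scale above which the approximation of $\mu$ by the $n$-AD-regular measure $\sigma_R:=g_{R\#}(\mathcal{H}^n\lfloor\R^n)$ on $\Gamma_R$ breaks down. Using the suppressed operator $T_{\Phi_R}$ and Lemma \ref{Le:TruncadosYMaximal}, the piece $K_R\mu$ can be defined as the difference of two suitable truncations of $T\mu$ at the scales of $\Tree(R)$; the key property is that $k_{\Phi_R}(x,y)=k(x,y)$ whenever both $x,y\in\Gamma_R$, while at larger scales $|k_{\Phi_R}|$ is bounded by $\theta_\mu(B_R)^n$-type quantities in view of property (3) of Lemma \ref{Th:CoronaDesc}. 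To bound $\|K_R\mu\|_{L^2(\mu)}^2\lesssim \theta_\mu(B_R)^2\mu(R)$, I would transfer the problem to $\Gamma_R$: since $\Gamma_R$ is a bi-Lipschitz image of $\R^n$, known results on $L^2$-boundedness of odd smooth $n$-dimensional Calderón-Zygmund operators on bi-Lipschitz images of $\R^n$ (together with the Cotlar-type inequality of Theorem \ref{Th:CotlarNTV} applied to $T_{\Phi_R}$) give $L^2(\sigma_R)$-boundedness of $T_{\Phi_R,\sigma_R}$, and then a $T(1)$-type comparison transfers the estimate from $\sigma_R$ back to $\mu$, exploiting that $\mu$ agrees with $\sigma_R$ on $\Good(R)$ up to the controlled stopping error encoded by $\delta_\mu(Q,\tilde Q)\lesssim \theta_\mu(B_R)$.

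For the quasi-orthogonality, I would expand $\|T\mu\|^2_{L^2(\mu)}=\sum_{R,R'}\langle K_R\mu,K_{R'}\mu\rangle_{L^2(\mu)}$ and estimate the off-diagonal terms with $R'\subsetneq R$ using the smoothness of the kernel and the fact that $K_{R'}\mu$ lives at scales well below those of $K_R\mu$; the factor $|k(x,y)-k(x,z_R)|\lesssim \ell(R')^{1}/|x-z_R|^{n+1}$ together with the growth bound \refeq{Eq:CrecLineal} will produce, by Cauchy-Schwarz over pairs and summation over chains in the tree structure of $\Top$, a geometric decay that reduces the double sum to the diagonal up to a term bounded by $\|\mu\|$. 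Combined with the packing
\begin{equation*}
\sum_{R\in\Top}\theta_\mu(B_R)^2\mu(R)\lesssim \theta_\mu(B_{R_0})^2\mu(R_0)+\iint_0^{\ell(R_0)}\beta^n_{\mu,2}(x,r)^2\theta^n_\mu(x,r)\frac{dr}{r}d\mu(x),
\end{equation*}
and noting that $\theta_\mu(B_{R_0})^2\mu(R_0)\lesssim \|\mu\|$ since $R_0$ carries all of $\mu$ and $\mu$ has polynomial growth, the desired inequality follows.

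The main obstacle, and the step requiring the most care, is the proof that each $\|K_R\mu\|_{L^2(\mu)}^2\lesssim \theta_\mu(B_R)^2\mu(R)$. This entails simultaneously (i) choosing $\Phi_R$ precisely enough that $T_{\Phi_R}$ behaves as a genuine truncation of $T$ inside $\Tree(R)$ while remaining $L^2(\sigma_R)$-bounded, (ii) absorbing the errors coming from those $x\in R$ which lie in $\bigcup_{Q\in\Stop(R)}Q$ rather than on $\Gamma_R$, and (iii) handling the non-doubling nature of $\mu$, which is why the stopping criterion involves doubling cells $\D^{db}$ and why the suppressed operators of Nazarov-Treil-Volberg are indispensable. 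The quasi-orthogonality step, while technical, is essentially an extension of Tolsa's original argument and presents fewer new difficulties once the single-tree estimate is in hand.
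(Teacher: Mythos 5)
Your high-level strategy---decompose $T\mu$ as $\sum_{R\in\Top}K_R\mu$, bound each $\|K_R\mu\|_{L^2(\mu)}^2\lesssim\theta_\mu(B_R)^2\mu(R)$ via suppressed operators and the bi-Lipschitz structure of $\Gamma_R$, control off-diagonal terms by quasi-orthogonality, and close with the packing condition---is indeed the one the paper follows, and your description of the diagonal estimate (choosing $\Phi_R$ as a regularization of $d_R(x)=\inf_{Q\in\Tree(R)}\{|x-z_Q|+\ell(Q)\}$, applying the Nazarov--Treil--Volberg suppressed operator machinery and the Cotlar-type inequality, and transferring boundedness from $\H^n\lfloor\Gamma_R$ back to $\mu$ via a Calder\'on--Zygmund decomposition) is broadly faithful to what the paper does in Section 7.

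However, there is a genuine gap in your quasi-orthogonality step, and it is precisely where the paper departs most from Tolsa's original argument. Your plan is to exploit the smoothness of $k$ in the form $|k(x,y)-k(x,z_{R'})|\lesssim \ell(R')/|x-z_{R'}|^{n+1}$, which implicitly requires replacing $K_R\mu$ by $K_R\mu - K_R\mu(z_{R'})$ against $K_{R'}\mu$ on $R'$; this is only legitimate if $\int_{R'}K_{R'}\mu\,d\mu = 0$. But, as Section 8 points out explicitly, this cancellation \emph{fails} in general: each block $T_M\mu=\chi_M T_{J(M)}\mu$ has $\int_M T_{J(M)}(\chi_M\mu)\,d\mu=0$ by antisymmetry, yet $T_{J(M)}\mu$ also sees mass outside $M$, so $\int_M T_{J(M)}\mu\,d\mu\neq 0$ and hence $\int_{R'}K_{R'}\mu\,d\mu\neq 0$. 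The paper's workaround is to introduce an intermediate scale $m(J(P),i)$, further decompose each $\chi_P T_i\mu$ over the cells $S\in\D_{m(J(P),i)}$, and split the resulting sum as $\ND_1+\ND_2$: the piece $\ND_1$ uses the genuine local cancellation $\int_S T_i(\chi_S\mu)\,d\mu=0$ together with the smoothness estimate you have in mind, while the piece $\ND_2$ (the ``cross-boundary'' part $T_i(\chi_{\R^d\setminus S}\mu)$) is controlled using the thin-boundary property \refeq{eqfk490} of the David--Mattila lattice. This is exactly the new ingredient the paper advertises; Tolsa's original proof instead averaged over random translated dyadic lattices. Your closing assessment is therefore inverted: the non-diagonal sum, not the diagonal one, is where the genuinely new technical work lies, and as written your quasi-orthogonality argument would not go through.
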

\par\medskip
Theorem \ref{Th:MainThm} follows from the non-homogeneous $T(1)$ theorem \cite[Theorem 1.1 and Lemma 7.3]{Tolsa-LittlewoodPaley} and the previous lemma, as it enables us to estimate $||T(\chi_B\mu)||_{L^2(\chi_B\mu)}$ for all balls $B\subset\R^{d}$. Indeed, if $\mu$ is the measure from Theorem \ref{Th:MainThm}, $B$ is a ball in $\R^d$ and $r(B)$ is its radius, applying Lemma \ref{Le:MainLemma} to the measure $\chi_B\mu$, we obtain 
\begin{equation*}
||T(\chi_B\mu)||^2_{L^2(\mu)}\lesssim \mu(B) + \iint\beta^n_{\chi_B\mu,2}(x,r)^2\theta^n_{\chi_B\mu}(x,r)\frac{dr}{r}d\mu(x)\lesssim \mu(B),
\end{equation*}
where the last inequality follows directly from the hypotheses of Theorem \ref{Th:MainThm}. Therefore, the non-homogeneous $T(1)$ theorem applies, and we obtain that $T_\mu$ is bounded in $L^2(\mu)$.
\par\medskip
To prove the Main Lemma, we will closely follow the ideas by Tolsa in \cite{Tolsa-pubMat}, but we will use the dyadic lattice $\D$ associated to $\mu$, which is introduced in Section 3, instead of the usual dyadic lattice of \emph{true} cubes in $\R^d$. We apply Lemma \ref{Th:CoronaDesc} to obtain a Corona Decomposition for $\mu$, and we decompose $T\mu$ in terms of that Corona Decomposition, since the terms that arise from that decomposition will be tractable. The main difference between our proof and Tolsa's one will be found in Section $8$, since the fact that the cells in $\D$ have thin boundaries helps us to avoid going through the process of averaging over random dyadic lattices to get the estimate that is proved there.

\section{Decomposition of $T\mu$ with respect to the corona decomposition}
To estimate $\ntmu^2$ we will decompose $T\mu$ with respect to the corona decomposition from Theorem \ref{Th:CoronaDesc}. To do so, let $\psi$ be a non-negative and radial $\mathcal{C}^{\infty}$ function such that 
\begin{equation*}
\chi_{B(0,0.001)}\leq \psi \leq \chi_{B(0,0.01)} \;\;\;\;\; \text{ and }\;\;\;\;\; ||\nabla\psi||\lesssim 1.
\end{equation*}
\par\medskip
For each $k\in\Z$, define $\psi_k(z) = \psi(A_0^k z)$ and $\varphi_k = \psi_k - \psi_{k+1}$, so that each function $\varphi_k$ is non-negative and supported on $B(0,0.01A_0^{-k})\setminus B(0,0.001A_0^{-k-1})$ and, furthermore,
\begin{equation*}
\sum_{k\in\Z}\varphi_k (z) = 1
\end{equation*}
for all $x\in \R^{d}\setminus\{0\}$.
\par\medskip
Now observe that, for $x\in\supp(\mu)$ we have
\begin{equation*}
\begin{aligned}
T\mu(x) &= \int k(x,y)d\mu(y) = \int \left(\sum_{k\in\Z}\varphi_k(x-y)\right)k(x,y)d\mu(y) \\
		&= \sum_{k\in\Z}\int\varphi_{k}(x-y)k(x,y)d\mu(y).
\end{aligned}
\end{equation*}
Therefore, if we define
\begin{equation*}
T_k\mu(x) = \int \varphi_k(x-y)k(x,y)d\mu(y)
\end{equation*}
we have
\begin{equation*}
T\mu(x) = \sum_{k\in\Z}T_k\mu(x).
\end{equation*}
\par\medskip
Now set $\D_k=\{R_0\}$ whenever $k<0$ and $T_Q\mu = \chi_Q T_{J(Q)}\mu$ for all $Q\in\D$. Then,
\begin{equation*}
\begin{aligned}
T\mu &= \sum_{k\in\Z}T_k\mu = \sum_{k\in\Z}\left(\sum_{Q\in\D_k}\chi_Q T_k\mu\right) \\
	 &= \sum_{k\in\Z}\sum_{Q\in\D_k} \chi_Q T_{J(Q)}\mu = \sum_{Q\in\D}T_Q\mu \\
	 &= \sum_{Q\in\mathcal{F}}T_Q\mu + \sum_{R\in\Top}\left(\sum_{Q\in\Tree(R)}T_Q\mu\right)\\
	 & = \sum_{Q\in\mathcal{F}}T_Q\mu + \sum_{R\in\Top} K_R\mu,
\end{aligned}
\end{equation*}
where, for $R\in\Top$,
\begin{equation*}
K_R\mu = \sum_{Q\in\Tree(R)}T_Q\mu
\end{equation*}
and $\mathcal{F}$ is a finite family of cells $Q\in\D$ with $\l(Q)\approx \diam(\supp(\mu))$.
\par\medskip
Notice that for $Q\in\mathcal{F}$, the estimate
\begin{equation*}
||T_Q\mu||_{L^2(\mu)}^2\lesssim ||\mu||
\end{equation*}
holds trivially. Therefore,
\begin{equation*}
\ntmu^2 \lesssim ||\mu|| + \left|\left| \sum_{R\in\Top}K_R\mu\right|\right|_{L^2(\mu)}^2 = \sum_{R\in\Top}||K_R\mu||_{L^2(\mu)}^2 + \sum_{R,R'\in \Top\colon R\neq R'}\langle K_R\mu, K_{R'}\mu \rangle_{\mu},
\end{equation*}
where $\langle \cdot, \cdot \rangle_\mu$ denotes the usual pairing in $L^2(\mu)$, i.e.,
$$
\langle f, g \rangle_\mu = \int fg d\mu
$$
\par\medskip

The \emph{diagonal sum} $\sum_{R\in\Top}||K_R\mu||^2_{L^2(\mu)}$ will be estimated in Section 7 using the fact that, on each $\Tree(R)$, $\mu$ can be approximated by a measure of the form $\eta\H^n_{\Gamma_R}$, where $\eta$ is a bounded function, and $T_{\H^n_{\Gamma_R}}$ is bounded in $L^2(\H^n_{\Gamma_R})$ because $\Gamma_R$ is a bi-Lipschitz image of $\R^n$, and thus uniformly $n$-rectifiable (see \cite{Tolsa-Alphas}, or the more classical reference \cite{David-Semmes-OnAndOf} for the case where $K$ is assumed to be $\mathcal{C}^{\infty}$ away from the origin). To deal with the \emph{non-diagonal sum} $\sum_{R,R'\in\Top\colon R\neq R'}\langle K_R\mu, K_{R'}\mu\rangle_\mu$, we will use quasi-orthogonality arguments. Here, the fact that the cells from $\D$ have thin boundaries will be crucial.

\section{The estimate of $\sum_{R\in\Top}||K_R\mu||_{L^2(\mu)}^2$}

The goal of this section is to prove the following:

\begin{lemma}
\label{Le:Diagonal}
\begin{equation*}
\sum_{R\in\Top}||K_R\mu||^2_{L^2(\mu)} \lesssim \sum_{R\in\Top}\theta_\mu(B_R)^2\mu(R).
\end{equation*}
\end{lemma}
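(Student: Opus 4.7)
The plan is to prove the inequality
\[
||K_R\mu||_{L^2(\mu)}^2 \lesssim \theta_\mu(B_R)^2\,\mu(R)
\]
for each $R \in \Top$ individually and then sum over $R \in \Top$. Fix $R \in \Top$. Since $K_R\mu = \sum_{Q \in \Tree(R)} \chi_Q T_{J(Q)}\mu$, the telescoping identity $\sum_{k \geq J(R)} \varphi_k = \psi_{J(R)}$ gives, for every $x \in R$,
\[
K_R\mu(x) = \int \bigl(\psi_{J(R)}(x-y) - \psi_{J(Q(x))+1}(x-y)\bigr)\,k(x,y)\,d\mu(y),
\]
where $Q(x) \in \Stop(R)$ is the stopping cell containing $x$, with the convention that the subtracted term is absent when $x \in \Good(R)$. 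Thus $K_R\mu(x)$ is a smoothly truncated version of $T\mu(x)$ with outer cutoff at scale $\approx \l(R)$ and inner cutoff at scale $\approx \l(Q(x))$, which is precisely the range of scales at which the corona decomposition says $\mu$ looks $n$-dimensional relative to $\Gamma_R$.

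The heart of the argument is to replace $\mu$ by an auxiliary measure $\sigma_R$ supported on $\Gamma_R$ which is $n$-AD-regular with regularity constant $\lesssim \theta_\mu(B_R)$. Its construction is natural: on $\Good(R)$ set $\sigma_R = \mu$, which is legitimate because $\mu$-a.e.\ point of $\Good(R)$ lies on $\Gamma_R$ by property (1) of Lemma \ref{Th:CoronaDesc}; on each stopping cell $Q \in \Stop(R)$ replace $\mu|_Q$ by $\theta_\mu(B_R)\,\H^n|_{D_Q}$, where $D_Q$ is a piece of $\Gamma_R$ near $B_{\tilde Q} \cap \Gamma_R$ (which is non-empty by property (2)) of diameter comparable to $\l(Q)$ and with $\H^n$-mass chosen so that $\sigma_R(D_Q) \approx \mu(Q)$. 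Property (3) of Lemma \ref{Th:CoronaDesc}, $\theta_\mu(1.1B_Q) \lesssim \theta_\mu(B_R)$ for all $Q \in \Tree(R)$, together with the bi-Lipschitz nature of $\Gamma_R$, makes $\sigma_R$ genuinely $n$-AD-regular with the claimed constant. Since $\Gamma_R$ is a bi-Lipschitz image of $\R^n$ and therefore uniformly $n$-rectifiable, $T$ is bounded on $L^2(\H^n|_{\Gamma_R})$ (see \cite{Tolsa-Alphas}); a scaling argument then gives
\[
||T\sigma_R||_{L^2(\sigma_R)} \lesssim \theta_\mu(B_R)\,\mu(R)^{1/2}.
\]

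The remaining task, and the main technical obstacle, is to estimate the discrepancy between $K_R\mu$ on $(\R^d, \mu)$ and the analogous band-limited operator applied to $\sigma_R$, in $L^2(\mu)$, by $\theta_\mu(B_R)\,\mu(R)^{1/2}$. I would write the difference as an integral against $d(\mu - \sigma_R)$ and split the resulting error into three pieces: (i) integrals over stopping cells $Q$ with $y \in Q$, which are dominated pointwise by $\theta_\mu(B_R)$ thanks to condition (2) of Lemma \ref{Th:CoronaDesc}, namely $\dmu(Q, \tilde Q) \lesssim \theta_\mu(B_R)$, combined with the size and smoothness of $\psi$ and $k$; (ii) contributions coming from $x$ lying near the boundary of some stopping cell, whose total $L^2(\mu)$ mass is summable thanks to the small-boundaries estimate \refeq{eqfk490} of the David-Mattila lattice\textemdash this is precisely what allows us to bypass the randomization over dyadic lattices employed by Tolsa in \cite{Tolsa-pubMat}; and (iii) the difference of the two band-limiting cutoffs acting on the AD-regular measure $\sigma_R$, controlled by the $L^2(\sigma_R)$ boundedness of $T$ together with the smoothness of $\psi$. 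Combining these three estimates with the above bound for $||T\sigma_R||_{L^2(\sigma_R)}$ yields $||K_R\mu||^2_{L^2(\mu)} \lesssim \theta_\mu(B_R)^2\,\mu(R)$; summing over $R \in \Top$ then completes the proof of Lemma \ref{Le:Diagonal}.
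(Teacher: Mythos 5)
Your high-level philosophy is the same as the paper's (approximate $\mu$ on $\Tree(R)$ by a measure supported on $\Gamma_R$, then transfer the known $L^2(\H^n\lfloor_{\Gamma_R})$-boundedness of $T$), but your execution is quite different, and I think the places you compress into a sentence are precisely the places where the paper has to work hardest. Three concrete concerns:

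First, the measure $\sigma_R$ you construct is \emph{not} $n$-AD-regular. On $\Good(R)$ you keep $\mu$ itself, and property (3) of Lemma \ref{Th:CoronaDesc} only gives the \emph{upper} density bound $\theta_\mu(1.1B_Q)\lesssim\theta_\mu(B_R)$; nothing in the corona decomposition prevents $\mu\lfloor_{\Good(R)}$ from having arbitrarily small density on small balls, so the lower regularity bound fails. The bound $\|T\sigma_R\|_{L^2(\sigma_R)}\lesssim\theta_\mu(B_R)\mu(R)^{1/2}$ can still be salvaged, but only by noting that $\sigma_R=h\,\H^n\lfloor_{\Gamma_R}$ with $0\le h\lesssim\theta_\mu(B_R)$ and invoking boundedness on $L^2(\H^n\lfloor_{\Gamma_R})$ directly (the density appears twice), not via AD-regularity of $\sigma_R$.

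Second, and this is the serious gap: your ``discrepancy'' step proposes to write $K_R\mu-\tilde K_R\sigma_R$ as an integral against $d(\mu-\sigma_R)$ and bound it in $L^2(\mu)$, and then control $\tilde K_R\sigma_R$ in $L^2(\mu)$ by $\|T\sigma_R\|_{L^2(\sigma_R)}$. But those two $L^2$ spaces are different, and passing from $L^2(\sigma_R)$ to $L^2(\mu)$ on the stopping cells (where $\mu$ and $\sigma_R$ live on disjoint sets) is exactly where the difficulty lies. The paper handles this by never attempting the direct comparison at all: it wraps the problem in the Nazarov-Treil-Volberg suppressed operator $T_{\Phi_R}$, proves the pointwise bound $|K_R\mu|\lesssim T_{\Phi_R,*}(\chi_{B_0(R)}\mu)+\theta_\mu(B_R)$ (Lemma \ref{Le:DesigualdadPuntual}), reduces $L^2(\mu)$-boundedness of $T_{\mu,\Phi_R}$ to a weak-$(1,1)$ estimate via antisymmetry, and proves that weak-$(1,1)$ estimate by a Calder\'on-Zygmund decomposition with bad parts $\nu_i$ that are \emph{balanced} (zero total mass) — that zero-mean trick, together with $\dmu(Q_i,Q_i')\lesssim\theta_\mu(B_R)$ and a Cotlar-type inequality (Lemma \ref{Le:Cotlar}), is what actually tames the stopping-cell contribution. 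Your sketch (i)--(iii) does not reproduce these mechanisms, and I don't see how a na\"ive pointwise comparison at each $x$ can substitute for them; in particular for $x$ at distance $\approx\ell(Q)$ from a stopping cell $Q$ there is no smallness in the kernel, which is exactly where the suppression $\Phi_R$ is needed.

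Third, a minor point: the small-boundaries estimate \refeq{eqfk490} that you invoke in step (ii) is not used anywhere in the paper's proof of the diagonal estimate — the paper uses it only in the non-diagonal estimate of $\ND_2$ (Section 8), where the thin boundaries replace the averaging over random dyadic lattices. In the diagonal sum the stopping cells are handled by Whitney regularization $\Reg(R)$, not by boundary smallness. You are conflating two separate parts of the argument.
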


\subsection{Regularization of the stopping squares}
$ $
\par\medskip
Pick $R\in \Top$ and define
\begin{equation*}
d_R(x) = \inf_{Q\in\Tree(R)}\left\{|x-z_Q|+\l(Q)\right\}.
\end{equation*}
Notice that $d_R$ is a $1$-Lipschitz function because it is defined as the infimum of a family of $1$-Lipschitz functions. 
\par\medskip
Now, we denote
\begin{equation}
\label{eq:DefBo(R)}
B_0(R) = B(z_R, 29A_0^{-J(R)}), \;\; W_R = \{x\in \R^d \colon d_R(x) = 0\}
\end{equation}
and, for all $x\in B_0(R)\setminus W_R$, we denote by $Q_x$ the largest cell $Q_x\in\D$ containing $x$ and such that
\begin{equation*}
\l(Q_x)\leq \frac{1}{60} \inf_{y\in Q_x} d_R(y).
\end{equation*}
We define $\Reg(R)$ as the family of the cells $\{Q_x\}_{x\in B_0(R)\setminus W_R}$, which are pairwise disjoint. Note that
\begin{equation*}
B_0(R)\setminus \bigcup_{Q\in\Reg(R)}Q = W_R \subset \Good(R).
\end{equation*}

\par\medskip

\begin{lemma}
Properties of the regularized stopping cells:
\label{Le:PropCubosRegulares}
\begin{enumerate}
\item If $Q\in \Reg(R)$ and $x\in B(z_Q, 50\l(Q))$, then $d_R(x) \approx \l(Q)$.
\item If $Q,Q'\in \Reg(R)$ are such that $B(z_Q, 50\l(Q))\cap B(z_{Q'}, 50\l(Q'))\neq \emptyset$, then $\l(Q)\approx \l(Q')$.
\item If $Q\in\Reg(R)\cap\D(R)$, there exists $Q'\in \Stop(R)$ such that $Q\subset Q'$.
\item If $Q\in\Reg(R)$, $x\in Q$ and $r>\l(Q)$, then
\begin{equation*}
\mu[B(x,r)\cap B_R] \lesssim \theta_\mu(B_R)r^n.
\end{equation*}
\end{enumerate}

\end{lemma}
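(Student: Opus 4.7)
The plan is that properties (1)--(3) follow fairly directly from the definition of $\Reg(R)$, while (4) combines them with property (3) of the corona decomposition (Lemma \ref{Th:CoronaDesc}).

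For (1), I would exploit the maximality of $Q=Q_x$ in $\Reg(R)$. Its parent $\hat Q$ must fail the defining inequality, so there exists $y\in\hat Q$ with $d_R(y) < 60\,\ell(\hat Q) = 60\,A_0\,\ell(Q)$. Since $d_R$ is $1$-Lipschitz and $\diam(\hat Q) \lesssim A_0\ell(Q)$, propagation gives $d_R \lesssim A_0\ell(Q)$ throughout $B(z_Q, 50\ell(Q))$. For the matching lower bound, the defining inequality of $\Reg(R)$ yields $\inf_Q d_R \geq 60\ell(Q)$; since $z_Q\in Q$, the $1$-Lipschitz property of $d_R$ gives $d_R \geq 10\ell(Q)$ throughout $B(z_Q, 50\ell(Q))$. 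Part (2) is then an immediate corollary: at a common point of the two balls, $d_R$ must be simultaneously $\approx\ell(Q)$ and $\approx\ell(Q')$, so $\ell(Q)\approx\ell(Q')$.

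For (3), I would argue by contradiction. If $Q\in\Reg(R)\cap\D(R)$ were not contained in any cell of $\Stop(R)$, then $Q$ itself would belong to $\Tree(R)$. Using $Q$ as the competitor in the infimum defining $d_R$ would then give $d_R(y) \leq |y-z_Q|+\ell(Q) \leq 2\ell(Q)$ for every $y\in Q$, directly contradicting the bound $\inf_Q d_R \geq 60\ell(Q)$ forced by membership in $\Reg(R)$.

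Part (4) is the main work. I would split into regimes. If $r\gtrsim r(B_R)$, the estimate is trivial: $\mu(B(x,r)\cap B_R) \leq \mu(B_R) = \theta_\mu(B_R)\,r(B_R)^n \lesssim \theta_\mu(B_R)\,r^n$. If $r\ll r(B_R)$, I would choose a cell $\tilde Q$ with $\ell(\tilde Q)\approx r$ belonging to $\Tree(R)$ and such that $B(x,r)\cap B_R$ is contained in a bounded dilate of $B_{\tilde Q}$; property (3) of Lemma \ref{Th:CoronaDesc} then gives $\mu(1.1 B_{\tilde Q}) \lesssim \theta_\mu(B_R)\,\ell(\tilde Q)^n \approx \theta_\mu(B_R)\,r^n$. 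To produce such a $\tilde Q$, start with an ancestor $P$ of $Q$ with $\ell(P)\approx r$; arguing as in (3), either $P$ lies in $\Tree(R)$, or $P$ is strictly contained in some $Q'\in\Stop(R)$, in which case the doubling cell $Q'\in\Tree(R)$ (or one of its ancestors of size comparable to $r$) plays the role of $\tilde Q$. The fact that some cell of $\Tree(R)$ near $x$ has size $\lesssim r$ is ensured by (1) together with the $1$-Lipschitz property: $d_R \lesssim r$ on $B(x,r)$.

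The main obstacle is the geometric bookkeeping in part (4), in particular verifying that one can find $\tilde Q \in \Tree(R)$ of scale comparable to $r$ whose dilate $C\,B_{\tilde Q}$ covers $B(x,r)\cap B_R$ for an absolute $C$. Some attention is also needed for the case $Q \not\subset R$, which can occur because $\Reg(R)$ is defined over $B_0(R)$ rather than over $R$; this is handled by the same geometric considerations, using polynomial growth of $\mu$ and property (3) of the corona on a neighbouring cell.
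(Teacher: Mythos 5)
Parts (1)--(3) of your proposal coincide with the paper's argument: lower bound for $d_R$ from the defining inequality of $\Reg(R)$ together with the $1$-Lipschitz property, upper bound from the failure of the defining inequality on the parent, and for (3) a contradiction by using $Q$ itself as competitor in the infimum defining $d_R$.

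The mechanism you propose for producing $\tilde Q$ in part (4) has a gap. You start from an ancestor $P$ of $Q$ with $\ell(P)\approx r$, and in the case $P\subsetneq Q'$ for some $Q'\in\Stop(R)$ you propose to replace $P$ by ``$Q'\in\Tree(R)$ (or one of its ancestors of size comparable to $r$).'' But $Q'\in\Stop(R)$ is not in $\Tree(R)$, and since $P\subsetneq Q'$ you have $\ell(Q')>\ell(P)\approx r$, so every ancestor of $Q'$ is even larger; there is no ancestor of $Q'$ of size comparable to $r$. Because $\ell(Q)$ can be far smaller than the side length of the stopping cell containing it, the range $\ell(Q)<r<\ell(Q')$ is genuinely nonempty, and there your chain of ancestors of $Q$ never passes through a cell of $\Tree(R)$ at scale $\approx r$ --- the whole chain between $Q$ and $Q'$ lies strictly inside $Q'$. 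The correct route is the one you only mention in passing: since $x\in Q\in\Reg(R)$ one has $d_R(x)\lesssim A_0\,\ell(Q)<A_0\,r$, so by the definition of $d_R$ there is a cell $S\in\Tree(R)$ (not, in general, an ancestor of $Q$) with $|x-z_S|+\ell(S)\lesssim A_0\,r$; one then takes the ancestor of \emph{this} $S$ inside $\Tree(R)$ of side length $\approx r$ (such an ancestor stays in $\Tree(R)$ because containment in a stopping cell is hereditary downwards), unless no such ancestor exists, in which case $B(x,Cr)\supset B_R$ and the bound is trivial. This is exactly the paper's argument, and it also covers the case $Q\not\subset R$ automatically, without the additional attention you flag.
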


\begin{proof}
\begin{enumerate}

\item First, observe that by definition of $\Reg(R)$,
\begin{equation*}
Q\in \Reg(R) \Rightarrow \l(Q)\leq \frac{1}{60}\inf_{y\in Q}d_R(y) \leq \frac{1}{60}d_R(z_Q),
\end{equation*}
that is, $d_R(z_Q)\geq 60\l(Q)$. Therefore, since $d_R$ is $1$-Lipschitz and $|x-z_Q|\leq 50\l(Q)$,
\begin{equation*}
d_R(x)\geq d_R(z_Q)-|x-z_Q|\geq 60\l(Q)-50\l(Q)=10\l(Q).
\end{equation*}
\par\medskip
On the other hand, again by definition of $\Reg(R)$, we have
\begin{equation*}
\l(\hat{Q})>\frac{1}{60}\inf_{y\in\hat{Q}}d_R(y),
\end{equation*}
where $\hat{Q}$ is the parent of $Q$. Then, there exists $\hat{y}\in\hat{Q}$ such that
\begin{equation*}
d_R(\hat{y})<60\l(\hat{Q})=60A_0\l(Q).
\end{equation*}
Now, since $x,\hat{y}\in\hat{Q}$ and $\diam(\hat{Q})\leq \l(\hat{Q}) = A_0\l(Q)$, and taking into account once again that $d_R$ is $1$-Lipschitz, we get
\begin{equation}
\label{Eq:dRx}
d_R(x)\leq d_R(\hat{y})+|x-\hat{y}|\leq 60A_0\l(Q)+A_0\l(Q) = 61A_0\l(Q),
\end{equation}
as desired.
\par\medskip
\item This follows directly from $(1)$.
\par\medskip
\item If such a $Q'\in\Stop(R)$ does not exist, we get that $Q\in\Tree(R)$. Then, for all $x\in Q$,
\begin{equation*}
d_R(x) \leq \inf_{Q'\in\Tree(R)}\left[|x-z_{Q'}|+\l(Q')\right] \leq |x-z_Q|+\l(Q) \leq 2\l(Q).
\end{equation*}
However, since $Q\in\Reg(R)$, we get
\begin{equation*}
\l(Q)\leq \frac{1}{60}\inf_{x\in Q}d_R(x),
\end{equation*}
so $d_R(x)\geq 60\l(Q)$ for all $x\in Q$. This is a contradiction.
\par\medskip
\item Since $x\in Q$ and $Q\in\Reg(R)$, by \refeq{Eq:dRx} we have $d_R(x)<62A_0\l(Q)$. Now, since
\begin{equation*}
d_R(x) = \inf_{Q'\in\Tree(R)}\left[|x-z_{Q'}|+\l(Q')\right]
\end{equation*}
we obtain that there exists $Q'\in\Tree(R)$ such that
\begin{equation*}
|x-z_{Q'}|+\l(Q')<62A_0\l(Q).
\end{equation*}
From this, we get
\begin{equation*}
|x-z_{Q'}|<62A_0r \;\;\text{and}\;\; r>\frac{1}{62A_0\l(Q')}
\end{equation*}
and, therefore, we have two possibilities:
\par\medskip
\begin{enumerate}
\item There exists $Q''\in\Tree(R)$ with $Q'\subset Q''$ and $\l(Q'')\lesssim r$ such that $B(x,r)\subset 1.1B_{Q''}$. In such a case, since $Q''\in\Tree(R)$, we have $\theta_\mu(1.1B_{Q''})\lesssim \theta_\mu(B_R)$, and therefore
\begin{equation*}
\begin{aligned}
\mu[B(x,r)\cap B_R] &\leq \mu[B(x,r)] \leq \mu(1.1B_{Q''}) = \theta_\mu(1.1 B_{Q''})r(B_{Q''})^n \\
			        & \lesssim \theta_\mu(1.1B_{Q''})r^n \lesssim \theta_\mu(B_R)r^n.
\end{aligned}
\end{equation*}
\par\medskip
\item $B(x,r)\supset B_R$. In this case,
\begin{equation*}
\mu[B(x,r)\cap B_R] = \mu(B_R) = \theta_\mu(B_R)r(B_R)^n \leq \theta_\mu(B_R)r^n.
\end{equation*}
\end{enumerate}

\end{enumerate}
\end{proof}

\subsection{The suppressed operators $T_{\Phi_R}$.}
$ $
\par\medskip
Fix $R\in\Top$ and define 
\begin{equation*}
\Phi_R(x) = \frac{1}{20A_0^2}d_R(x).
\end{equation*}

\begin{lemma}
Properties of the suppressing function $\Phi_R$:
\label{Le:PropiedadesPhiR}
\begin{enumerate}
\item If $x \in Q$ for some $Q\in \Stop(R)$, $\Phi_R(x)\leq \frac{1}{10A_0}\l(Q)$.
\item If $x \in \Good(R)$, $\Phi_R(x) = 0$.
\item If $x\in Q$ for some $Q\in\Reg(R)$, then $\Phi_R(x)\gtrsim \l(Q)$.
\item For all $x\in B_R$ and all $r\geq \Phi_R(x)$, 
\begin{equation}
\label{Eq:DensBolasRadioGrande}
\mu[B(x,r)\cap B_R] \leq C_1\,\theta_\mu(B_R)r^n.
\end{equation}
\end{enumerate}
\end{lemma}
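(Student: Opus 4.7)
The plan is to verify the four claims one by one by directly unwinding the definitions of $d_R$ and $\Phi_R$ and exploiting the tree/stopping structure from Section 4 together with Lemma \ref{Le:PropCubosRegulares}. For (1), the key observation is that if $Q\in\Stop(R)$ then its parent $\hat{Q}$ lies in $\Tree(R)$: since $Q$ is maximal among $\Top$-cells strictly contained in $R$, $\hat{Q}$ cannot be contained in any cell of $\Stop(R)$ without violating that maximality. Testing the infimum defining $d_R$ on $\hat{Q}$ yields
$$
d_R(x)\leq|x-z_{\hat{Q}}|+\l(\hat{Q})\leq 2\l(\hat{Q})=2A_0\l(Q),
$$
and division by $20A_0^2$ gives $\Phi_R(x)\leq\l(Q)/(10A_0)$.

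For (2), if $x\in\Good(R)$ then for every $k\geq J(R)$ the unique cell $Q_k\in\D_k$ containing $x$ is in $\Tree(R)$: indeed $Q_k\subset R$ by nesting, and if $Q_k$ were contained in some $Q^{*}\in\Stop(R)$ then $x\in Q_k\subset Q^{*}$ would contradict $x\in\Good(R)$. Testing the infimum on $Q_k$ gives $d_R(x)\leq 2\l(Q_k)\to 0$, hence $\Phi_R(x)=0$. Claim (3) is an immediate consequence of Lemma \ref{Le:PropCubosRegulares}(1): for $x\in Q\in\Reg(R)$ we have $|x-z_Q|\leq\diam(Q)\leq\l(Q)$, so $x\in B(z_Q,50\l(Q))$ and that lemma produces $d_R(x)\geq 10\l(Q)$, whence $\Phi_R(x)\geq\l(Q)/(2A_0^2)\gtrsim\l(Q)$.

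The main content is (4). Given $x\in B_R$ and $r\geq\Phi_R(x)$, the definition of $\Phi_R$ gives $d_R(x)\leq 20A_0^2\,r$, so there exists $Q'\in\Tree(R)$ with $|x-z_{Q'}|+\l(Q')\lesssim r$. Mirroring the proof of Lemma \ref{Le:PropCubosRegulares}(4), the idea is to ascend in the dyadic tree to an ancestor $Q''\in\Tree(R)$ of $Q'$ with $\l(Q'')\approx r$ and $B(x,r)\subset 1.1\,B_{Q''}$; property (3) of Theorem \ref{Th:CoronaDesc} then gives
$$
\mu[B(x,r)\cap B_R]\leq \mu(1.1\,B_{Q''})\lesssim \theta_\mu(B_R)\,r(B_{Q''})^n\approx \theta_\mu(B_R)\,r^n.
$$
If no such ancestor fits inside $R$, then necessarily $r\gtrsim\l(R)$, and the trivial estimate $\mu[B(x,r)\cap B_R]\leq\mu(B_R)=\theta_\mu(B_R)\,r(B_R)^n\lesssim\theta_\mu(B_R)\,r^n$ suffices.

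The main obstacle I anticipate is the bookkeeping in (4): one must select the ancestor $Q''$ so that $\l(Q'')\approx r$, $Q''\in\D(R)\cap\Tree(R)$, and $B(x,r)\subset 1.1\,B_{Q''}$ all hold simultaneously. Because side lengths in $\D$ are quantized by factors of $A_0$, these inclusions hold only up to constants depending on $A_0$, but as only $\lesssim$-bounds are sought this amounts to a careful yet routine choice of constants. Once that is settled, property (3) of Theorem \ref{Th:CoronaDesc} carries the substantive part of the argument.
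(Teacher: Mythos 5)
Your proofs of (1)--(3) coincide with the paper's. For (4), the paper splits into the cases $x\in W_R$ (where $d_R(x)=0$, and one argues as in Lemma \ref{Le:PropCubosRegulares}(4)) and $x\in Q$ for some $Q\in\Reg(R)$ (where $r\geq\Phi_R(x)\gtrsim\l(Q)$ by part (3), so Lemma \ref{Le:PropCubosRegulares}(4) applies directly). You instead bypass the case split by working from $d_R(x)\leq 20A_0^2r$ uniformly and redoing the ascent-in-the-tree argument; this is the same bookkeeping internal to Lemma \ref{Le:PropCubosRegulares}(4), so the substance is identical, just inlined rather than delegated. Both are correct, and the quantitative details you flag (ensuring $Q''\in\Tree(R)$, $\l(Q'')\approx r$, and $B(x,r)\subset 1.1B_{Q''}$ simultaneously, with the $r\gtrsim\l(R)$ fallback) are exactly what the cited lemma already handles.
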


\begin{proof}
\begin{enumerate}
\item Let $Q\in\Stop(R)$ and $x\in Q$. We have
\begin{equation*}
d_R(x) = \inf_{Q'\in\Tree(R)}\left[|x-z_{Q'}|+\l(Q')\right] \leq |x-z_{\hat{Q}}|+\l(\hat{Q}),
\end{equation*}
where $\hat{Q}$ is the parent of $Q$. Then,
\begin{equation*}
\Phi_R(x) = \frac{1}{20A_0^2}d_R(x) \leq \frac{1}{20A_0^2}2\l(\hat{Q}) = \frac{1}{10A_0^2}A_0\l(Q) = \frac{1}{10A_0}\l(Q).
\end{equation*}
\par\medskip
\item If $x\in\Good(R)$, there exist arbitrarily small cells $Q\in\Tree(R)$ that contain $x$. Therefore,
\begin{equation*}
\Phi_R(x) = \frac{1}{20A_0^2}\inf_{Q\in\Tree(R)}\left[|x-z_Q|+\l(Q)\right] = 0.
\end{equation*}
\par\medskip
\item This follows directly from $(1)$ in Lemma \ref{Le:PropCubosRegulares}.
\par\medskip
\item First, observe that if $x\in R\setminus \bigcup_{Q\in\Reg(R)}Q$, then \refeq{Eq:DensBolasRadioGrande} holds for all $r>0$, and this can be proved arguing as in $(4)$ in Lemma \ref{Le:PropCubosRegulares} and taking into account that $d_R(x)=0$. Otherwise, if $x\in Q$ for some $Q\in\Reg(R)$, by $(1)$ in lemma \ref{Le:PropCubosRegulares} we have that $r\gtrsim \l(Q)$, and so $(4)$ in lemma \ref{Le:PropCubosRegulares} applies.

\end{enumerate}
\end{proof}

\begin{lemma}
\label{Le:DesigualdadPuntual}
For $x\in R$,
$$
|K_R\mu(x)|\leq T_{\Phi_R, *}(\chi_{B_0(R)}\mu)(x) + c\theta_\mu(B_R),
$$
where $B_0(R)=B(z_R, 29A_0^{-J(R)})$, which is defined in \refeq{eq:DefBo(R)}, satisfies $\theta_\mu(B_0(R))\approx \theta_\mu(B_R)$.
\end{lemma}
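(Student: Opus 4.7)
\emph{Plan.} Fix $x \in R$ and let $k_1 = k_1(x) \in \{J(R), J(R)+1, \ldots\} \cup \{+\infty\}$ denote the greatest level at which the cell of $\D$ containing $x$ still belongs to $\Tree(R)$; thus $k_1 = +\infty$ precisely when $x \in \Good(R)$, and otherwise $k_1$ is controlled by the level of the stopping cell containing $x$. Since $K_R\mu(x) = \sum_{k=J(R)}^{k_1} T_k\mu(x)$ and $\varphi_k = \psi_k - \psi_{k+1}$, the sum telescopes to
\[
K_R\mu(x) = \int \Psi_x(x-y)\, k(x,y)\, d\mu(y), \qquad \Psi_x := \psi_{J(R)} - \psi_{k_1+1},
\]
with the convention $\psi_\infty \equiv 0$. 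The cutoff $\Psi_x$ is supported in the spherical shell $\{0.001\, A_0^{-k_1-1} \leq |z| \leq 0.01\, A_0^{-J(R)}\}$ and equals $1$ in its interior.

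Set $\epsilon_x := 0.01\, A_0^{-k_1-1}$ (with $\epsilon_x = 0$ when $k_1 = \infty$); Lemma \ref{Le:PropiedadesPhiR}(1) gives $\Phi_R(x) \lesssim \epsilon_x$. I would then split the pointwise difference
\[
K_R\mu(x) - T_{\Phi_R, \epsilon_x}(\chi_{B_0(R)}\mu)(x) = E_{\rm out}(x) + E_{\rm in}(x) + E_{\rm supp}(x),
\]
where $E_{\rm out}$ collects contributions from $\{|x-y| > 0.001\, A_0^{-J(R)}\}$ (both the outer transition of $\Psi_x$ and the part of $B_0(R)$ beyond its support); $E_{\rm in}$ is the inner-transition piece from $\{0.001\, A_0^{-k_1-1} < |x-y| \leq \epsilon_x\}$, where $\Psi_x \neq 0$ but lies below the sharp truncation scale; and $E_{\rm supp}$ is the bulk error from replacing $k$ by $k_{\Phi_R}$ on $\{\epsilon_x < |x-y| \leq 0.001\, A_0^{-J(R)}\}$. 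Each is bounded by $\theta_\mu(B_R)$: for $E_{\rm out}$, via $|k|, |k_{\Phi_R}| \lesssim A_0^{nJ(R)}$ on that region and the volume bound $\mu(B_0(R)) \lesssim \theta_\mu(B_R)\, A_0^{-nJ(R)}$ (from the comparability $\theta_\mu(B_0(R)) \approx \theta_\mu(B_R)$, which uses $R \in \D^{db}$ and the fact that $B_0(R), B_R$ are concentric with comparable radii); for $E_{\rm in}$, via Lemma \ref{Le:PropiedadesPhiR}(1) placing the annulus at scale $\gtrsim \Phi_R(x)$, so Lemma \ref{Le:PropiedadesPhiR}(4) controls its $\mu$-mass by $\theta_\mu(B_R)\, \epsilon_x^n$, matched against $|k| \lesssim \epsilon_x^{-n}$; for $E_{\rm supp}$, via the inequality $|k(x,y) - k_{\Phi_R}(x,y)| \lesssim \Phi_R(x)^n\, |x-y|^{-2n}$ (immediate from the kernel size bound together with the definition of $k_\Phi$), a dyadic decomposition of $\{\epsilon_x < |x-y| \leq 0.001\, A_0^{-J(R)}\}$ into annuli $\{2^j \epsilon_x < |x-y| \leq 2^{j+1}\epsilon_x\}$, and Lemma \ref{Le:PropiedadesPhiR}(4) applied termwise, yielding a geometric series bounded by $\theta_\mu(B_R)(\Phi_R(x)/\epsilon_x)^n \lesssim \theta_\mu(B_R)$.

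Summing the three error bounds gives $|K_R\mu(x) - T_{\Phi_R,\epsilon_x}(\chi_{B_0(R)}\mu)(x)| \lesssim \theta_\mu(B_R)$, and passing from a single truncation to the maximal operator $T_{\Phi_R,*}$ yields the claim, while $\theta_\mu(B_0(R)) \approx \theta_\mu(B_R)$ follows from the concentric balls having comparable radii and the doubling of $R$. The main obstacle is the simultaneous coordination of the three scales $A_0^{-k_1-1}$, $\epsilon_x$, and $\Phi_R(x)$: the normalization $\Phi_R = \tfrac{1}{20 A_0^2}d_R$ and the factor $60$ in the definition of $\Reg(R)$ are calibrated so that $\Phi_R(x) \lesssim \epsilon_x$, which is precisely what is needed for Lemma \ref{Le:PropiedadesPhiR}(4) to apply uniformly in all three pieces.
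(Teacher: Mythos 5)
Your proof is correct and follows essentially the same route as the paper's: fix the stopping scale at $x$, telescope the Littlewood–Paley pieces to a smooth cutoff $\psi_{J(R)}-\psi_{J(Q)}$, compare with a sharp truncation of the suppressed operator at a scale comparable to $\ell(Q)$ that dominates $\Phi_R(x)$, and absorb the errors into $c\,\theta_\mu(B_R)$ using the density bound of Lemma~\ref{Le:PropiedadesPhiR}(4). The only difference is expository: where the paper invokes Lemma~\ref{Le:TruncadosYMaximal} to pass from $T_{\epsilon_x}$ to $T_{\Phi_R,\epsilon_x}$, you re-derive that estimate directly (your $E_{\mathrm{supp}}$ term), and you split what the paper treats in one step into the two pieces $E_{\mathrm{out}}$, $E_{\mathrm{in}}$; the content is the same.
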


\begin{proof}
The fact that $\theta_\mu(B_0(R))\approx \theta_\mu(B_R)$ follows immediately from $R\in\D^{db}$.
\par\medskip
Recall that
\begin{equation*}
K_R\mu = \sum_{Q\in\Tree(R)}T_Q\mu = \sum_{Q\in\Tree(R)}\chi_Q T_{J(Q)}\mu.
\end{equation*}
Now, for $x\in R$, we have two possibilities: either $x\in Q$ for some $Q\in\Stop(R)$ or $x\in\Good(R)$.

\begin{enumerate}
\item Suppose $x\in Q$ for some $Q\in\Stop(R)$. Then, 

\begin{equation*}
\begin{aligned}
|K_R\mu(x)| &= \left|\sum_{j=J(R)}^{J(Q)-1}T_j\mu(x)\right| = \left|\int\left(\sum_{j=J(R)}^{J(Q)-1}\varphi_j(x-y)\right)k(x,y)d\mu(y)\right| \\
	&= \left|\int[\psi_{J(R)}(x-y)-\psi_{J(Q)}(x-y)]k(x,y)d\mu(y)\right|\\
	&= \left|\int_{|y-x|\geq 0.001A_0^{-J(Q)-1}}[\psi_{J(R)}(x-y)-\psi_{J(Q)}(x-y)]k(x,y)\chi_{B_0(R)}(y)d\mu(y)\right|\\
	&\leq |T_{2A_0^{-1}\l(Q)}(\chi_{B_0(R)}\mu)(x)|+c\theta_\mu(B_R)\\
	&\leq |T_{\Phi_R, 2A_0^{-1}\l(Q)}(\chi_{B_0(R)}\mu)(x)| + |T_{2A_0^{-1}\l(Q)}(\chi_{B_0(R)}\mu)(x) -T_{\Phi_R, 2A_0^{-1}\l(Q)}(\chi_{B_0(R)}\mu)(x)| +c\theta_\mu(B_R)\\
	&\leq T_{\Phi_R,*}(\chi_{B_0(R)}\mu)(x) + M^r_{\Phi_R}(\chi_{B_0(R)}\mu)(x)+c\theta_\mu(B_R)\\
	&\leq T_{\Phi_R,*}(\chi_{B_0(R)}\mu)(x)+c\theta_\mu(B_R),
\end{aligned}
\end{equation*}
where the penultimate inequality follows from the fact that $\Phi_R(x)\leq 2A_0^{-1}\l(Q)$ and the last one from lemma \ref{Le:TruncadosYMaximal}.
\par\medskip
\item If $x\in\Good(R)$, we have
$$
|K_R\mu(x)| = \lim_{N\rightarrow\infty} \left|\int[\psi_{J(R)}(x-y)-\psi_N(x-y)]k(x,y)d\mu(y)\right|.
$$
Then, for $N>J(R)$ we obtain, arguing as above, that
\begin{equation*}
\begin{aligned}
\left|\int[\psi_{J(R)}(x-y)-\psi_N(x-y)]k(x,y)d\mu(y)\right| &\leq |T_{2\l_{N+1}}(\chi_{B_0(R)}\mu)(x)|+c\theta_\mu(B_R) \\
	&\leq |T_{2\l_{N+1}}(\chi_{B_0(R)}\mu)(x)-T_{\Phi_R, 2\l_{N+1}}(\chi_{B_0(R)}\mu)(x)| \\
		& \qquad+ |T_{\Phi_R,2\l_{N+1}}(\chi_{B_0(R)}\mu)(x)|+c\theta_\mu(B_R)\\
	&\leq M^r_{\Phi_R}(\chi_{B_0(R)}\mu)(x) + T_{\Phi_R,*}(\chi_{B_0(R)}\mu)(x) + c\theta_\mu(B_R)\\
	&\leq T_{\Phi_R,*}(\chi_{B_0(R)}\mu)(x) + c\theta_\mu(B_R)
\end{aligned}
\end{equation*}
where in the penultimate inequality we used the fact that $\Phi_R(x) = 0 \leq 2\l_{N+1}$. Then, letting $N\rightarrow\infty$, we obtain
$$
|K_R\mu(x)|\leq T_{\Phi_R,*}(\chi_{B_0(R)}\mu)(x) + c\theta_\mu(B_R),
$$
as desired.
\end{enumerate}
\end{proof}

\subsection{A Cotlar-type inequality.}
$ $
\par\medskip
\begin{lemma}
\label{Le:Cotlar}
Let $R\in\Top$. Then, for all $0<s\leq 1$,
\begin{equation}
\label{eq:cotlar}
T_{\Phi_R, *}(f\H^n\lfloor_{\Gamma_R})(x) \leq C_s\left[M^r_{\Phi_R}((T_*(f\H^n\lfloor_{\Gamma_R})^s)\H^n\lfloor_{\Gamma_R})(x)^{\frac{1}{s}}+M^r_{\Phi_R}(f\\H^n\lfloor_{\Gamma_R})(x)\right]
\end{equation}
for all $x\in B_0(R)$.
\end{lemma}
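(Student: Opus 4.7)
The plan is to adapt the standard Cotlar-type argument, in the spirit of Theorem~\ref{Th:CotlarNTV}, to the suppressed operator $T_{\Phi_R}$ acting on the measure $\sigma:=\H^n\lfloor_{\Gamma_R}$. Since $\Gamma_R=g_R(\R^n)$ is a bi-Lipschitz image of $\R^n$, the measure $\sigma$ is $n$-AD regular and $T_\sigma$ is bounded on $L^2(\sigma)$; by the Nazarov, Treil and Volberg theory of suppressed operators, $T_{\Phi_R,\sigma}$ is therefore bounded on $L^p(\sigma)$ for every $1<p<\infty$ and is of weak type $(1,1)$ with respect to $\sigma$.

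Fix $x\in B_0(R)$ and $\epsilon>0$. By Lemma~\ref{Le:TruncadosYMaximal}, applied in both cases $\epsilon>\Phi_R(x)$ and $\epsilon\leq\Phi_R(x)$, bounding $|T_{\Phi_R,\epsilon}(f\sigma)(x)|$ reduces, at the cost of an additive error $\lesssim M^r_{\Phi_R}(f\sigma)(x)$, to bounding $|T_\epsilon(f\sigma)(x)|$ for some $\epsilon\gtrsim\Phi_R(x)$. Set $B:=B(x,\epsilon/4)$ and split $f\sigma=f\chi_{2B}\sigma+f\chi_{\R^d\setminus 2B}\sigma$. For any $y\in B$, the Calder\'{o}n-Zygmund smoothness of $k$ combined with a dyadic annular decomposition of $\R^d\setminus 2B$ yields
\begin{equation*}
|T(f\chi_{\R^d\setminus 2B}\sigma)(x)-T(f\chi_{\R^d\setminus 2B}\sigma)(y)|\lesssim M^r_{\Phi_R}(f\sigma)(x),
\end{equation*}
where I use $\epsilon\gtrsim\Phi_R(x)$ to identify the resulting tail with the $M^r_{\Phi_R}$-maximal function of $f\sigma$ at $x$.

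Next I choose a set $E\subset B\cap\Gamma_R$ with $\sigma(E)\approx\epsilon^n$, raise the pointwise inequality $|T_\epsilon(f\sigma)(x)|\lesssim|T(f\chi_{\R^d\setminus 2B}\sigma)(y)|+M^r_{\Phi_R}(f\sigma)(x)$ to the $s$-th power, and average over $y\in E$. Writing $T(f\chi_{\R^d\setminus 2B}\sigma)(y)=T(f\sigma)(y)-T(f\chi_{2B}\sigma)(y)$ and bounding $|T(f\sigma)(y)|\leq T_*(f\sigma)(y)$, the contribution of the local piece $T(f\chi_{2B}\sigma)(y)$ is handled, for $s<1$, by Kolmogorov's inequality and the weak $(1,1)$ bound of $T_\sigma$, yielding a bound by $M^r_{\Phi_R}(f\sigma)(x)^s$. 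Since $\epsilon\gtrsim\Phi_R(x)$ and $\sigma(E)\approx\epsilon^n$, the remaining term is dominated by $M^r_{\Phi_R}((T_*(f\sigma))^s\sigma)(x)$. Raising to the power $1/s$ and taking the supremum over $\epsilon$ gives \refeq{eq:cotlar}.

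The main obstacle is producing the set $E$ above. This rests on the construction of $\Phi_R$ via the $1$-Lipschitz regularization $d_R$: using Lemmas~\ref{Le:PropCubosRegulares} and \ref{Le:PropiedadesPhiR} together with Lemma~\ref{Th:CoronaDesc}(2), one shows that $B(x,c\epsilon)$ meets $\Gamma_R$ whenever $\epsilon\gtrsim\Phi_R(x)$, and the AD regularity of $\sigma=\H^n\lfloor_{\Gamma_R}$ then supplies a set $E\subset B\cap\Gamma_R$ with $\sigma(E)\approx\epsilon^n$.
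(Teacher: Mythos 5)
Your strategy is recognizably the Cotlar-type argument, but the specific decomposition you chose creates a genuine gap at $s=1$. The paper's proof compares the truncated operator $T_\epsilon\nu(x)$ directly with the truncated operator $T_\epsilon\nu(x')$ for $x'\in B(x,2\epsilon)\cap\Gamma_R$, passing through $T_{4\epsilon}\nu(x)=T(\chi_{\R^d\setminus B(x,4\epsilon)}\nu)(x)$; every term in that chain is either dominated by $T_*\nu(x')$ or is an error bounded by $M^r_{\Phi_R}\nu(x)$, so the argument covers the full range $0<s\leq 1$ with no further input. Your rewriting $T(f\chi_{\R^d\setminus 2B}\sigma)(y)=T(f\sigma)(y)-T(f\chi_{2B}\sigma)(y)$ introduces the untruncated local piece $T(f\chi_{2B}\sigma)(y)$, and you dispose of its $s$-average over $E$ by Kolmogorov's inequality, which genuinely requires $s<1$: for $s=1$ the $L^1(\sigma)$-average of $T_\sigma(f\chi_{2B})$ over $E$ is not controlled by the weak $(1,1)$ bound, and the $L^2(\sigma)$ bound combined with Cauchy--Schwarz produces $\epsilon^{-n/2}\|f\chi_{2B}\|_{L^2(\sigma)}$, which cannot be dominated by $M^r_{\Phi_R}(f\sigma)(x)$ for general $f$. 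Since the lemma is stated for $0<s\leq 1$ and the paper invokes it with $s=1$ in the proof of Lemma~\ref{Le:TSigmaRAcotado}, this is a real gap, not just a loss of generality.

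A secondary issue is your route to $\epsilon\gtrsim\dist(x,\Gamma_R)$, which is what you actually need to produce $E\subset B(x,\epsilon/4)\cap\Gamma_R$ with $\sigma(E)\approx\epsilon^n$. You appeal to Lemmas~\ref{Le:PropCubosRegulares}, \ref{Le:PropiedadesPhiR} and \ref{Th:CoronaDesc}(2), but the cell $\tilde Q$ in Lemma~\ref{Th:CoronaDesc}(2) is only controlled through $\delta_\mu(Q,\tilde Q)\lesssim\theta_\mu(B_R)$ and may have side length far larger than $\Phi_R(x)$, so the claim that $B(x,c\epsilon)$ meets $\Gamma_R$ whenever $\epsilon\gtrsim\Phi_R(x)$ does not follow from those lemmas as stated. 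The paper bypasses all of this: since $\nu=f\H^n\lfloor_{\Gamma_R}$ is supported on $\Gamma_R$, one has $T_{\Phi_R,\epsilon}\nu(x)=T_{\Phi_R,\epsilon_0}\nu(x)$ whenever $\epsilon\leq\epsilon_0:=0.9\,\dist(x,\Gamma_R)$, so one may assume $\epsilon\geq\max\{\Phi_R(x),\,0.9\,\dist(x,\Gamma_R)\}$ for free, after which $\H^n(B(x,2\epsilon)\cap\Gamma_R)\gtrsim\epsilon^n$ follows from the AD-regularity of $\Gamma_R$. Your estimate for the far piece via the Calder\'on--Zygmund smoothness and the tail bound \refeq{Eq:CrecLineal} is correct and matches the paper's.
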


\begin{proof}
Denote $\nu = f\H^n\lfloor_{\Gamma_R}$. We will prove that for all $x\in B_0(R)$ and all $\epsilon>0$,
\begin{equation*}
T_{\Phi_R,\epsilon}\nu(x) \leq C_s \left[M^r_{\Phi_R}((T_*\nu)^s \H^n\lfloor_{\Gamma_R})(x)^{\frac{1}{s}}+M^r_{\Phi_R}\nu(x)\right]
\end{equation*}
By $(2)$ in Lemma \ref{Le:TruncadosYMaximal}, we can limit ourselves to the case $\epsilon\geq \Phi_R(x)$. Furthermore, we can assume $\epsilon > \epsilon_0 := 0.9\dist(x,\Gamma_R)$ since otherwise $T_{\Phi_R, \epsilon}\nu(x) = T_{\Phi_R, \epsilon_0}\nu(x)$. Therefore, from now on we will assume $\epsilon\geq \max\{\Phi_R(x),0.9\dist(x,\Gamma_R)\}$. Notice that, in such a case, $\H^n(B(x,2\epsilon)\cap\Gamma_R)\gtrsim \epsilon^n$. We claim now that, for all $x'\in B(x,2\epsilon)\cap\Gamma_R)$

\begin{equation}
\label{eq:APorCotlar}
|T_{\Phi_R,\epsilon}\nu(x)|\leq |T_{\epsilon}\nu(x')|+CM^r_{\Phi_R}\nu(x).
\end{equation}

From this, the desired result follows easily. Indeed, this implies that for all $0<s\leq 1$, 
\begin{equation*}
|T_{\Phi_R,\epsilon}\nu(x)|^s\leq T_*\nu(x')^s+CM^r_{\Phi_R}\nu(x)^s,
\end{equation*}
and so, taking the $\H^n\lfloor_{\Gamma_R}$-average for with respect to $x'\in B(x,2\epsilon)$, we get

\begin{equation*}
\begin{aligned}
|T_{\Phi_R,\epsilon}\nu(x)|^s &\leq \frac{1}{\H^n[B(x,2\epsilon)\cap\Gamma_R]}\int_{B(x,2\epsilon)}T_*\nu(x')^sd\H^n\lfloor_{\Gamma_R}(x')+CM^r_{\Phi_R}\nu(x)^s \\
	& \lesssim \frac{1}{\epsilon^n}\int_{B(x,2\epsilon)}T_*\nu(x')^sd\H^n\lfloor_{\Gamma_R}(x')+M^r_{\Phi_R}\nu(x)^s\\
	& \lesssim M^r_{\Phi_R}((T_*\nu)^s\H^n\lfloor_{\Gamma_R})(x)+M^r_{\Phi_R}\nu(x)^s
\end{aligned}
\end{equation*}
and, exponentiating by $\frac{1}{s}$, \refeq{eq:cotlar} follows.
\par\medskip
Let us prove now \refeq{eq:APorCotlar}. We have
\begin{equation*}
|T_{\Phi_R,\epsilon}\nu(x)| \leq |T_{\Phi_R,\epsilon}\nu(x)-T_\epsilon\nu(x)|+|T_\epsilon\nu(x)|\lesssim |T_\epsilon\nu(x)| + M^r_{\Phi_R}\nu(x)
\end{equation*}
by Lemma \ref{Le:TruncadosYMaximal}, since $\epsilon>\Phi_R(x)$. Now, for all $x'\in B(x,2\epsilon)$
\begin{equation*}
\begin{aligned}
|T_\epsilon\nu(x)| &\leq |T_\epsilon\nu(x)-T_{4\epsilon}\nu(x)| + |T_{4\epsilon}\nu(x)| \\
	&= |T_\epsilon\nu(x)-T_{4\epsilon}\nu(x)| + |T(\chi_{\R^d\setminus B(x,4\epsilon)}\nu)(x)| \\
	&\leq |T_\epsilon\nu(x)-T_{4\epsilon}\nu(x)| + |T(\chi_{\R^d\setminus B(x,4\epsilon)}\nu)(x)-T(\chi_{\R^d\setminus B(x,4\epsilon)}\nu)(x')|+|T(\chi_{\R^d\setminus B(x,4\epsilon)}\nu)(x')|\\
	&\leq|T_\epsilon\nu(x)-T_{4\epsilon}\nu(x)| + |T(\chi_{\R^d\setminus B(x,4\epsilon)}\nu)(x)-T(\chi_{\R^d\setminus B(x,4\epsilon)}\nu)(x')|\\
	&\qquad+|T(\chi_{\R^d\setminus B(x,4\epsilon)}\nu)(x')-T_\epsilon\nu(x')|+|T_\epsilon\nu(x')|.
\end{aligned}
\end{equation*}

Now
\begin{equation*}
|T_\epsilon\nu(x)-T_{4\epsilon}\nu(x)| = \left|\int_{\epsilon\leq |x-y|<4\epsilon}k(x,y)d\nu(y)\right| \lesssim \int_{\epsilon < |x-y|\leq 4\epsilon}\frac{d|\nu|(y)}{|x-y|^n}\lesssim \frac{|\nu|[B(x,4\epsilon)]}{(4\epsilon)^n}\leq M^r_{\Phi_R}\nu(x).
\end{equation*}

In addition
\begin{equation*}
\begin{aligned}
|T(\chi_{\R^d\setminus B(x,4\epsilon)}\nu)(x)-T(\chi_{\R^d\setminus B(x,4\epsilon)}\nu)(x')| &= \left|\int_{|x-y|>4\epsilon}[k(x,y)-k(x',y)]d\nu(y)\right|\\
	&\lesssim \int_{|x-y|>4\epsilon}\frac{|x-x'|}{|x-y|^{n+1}}d|\nu|(y)\leq M^r_{\Phi_R}\nu(x),
\end{aligned}
\end{equation*}
where the last inequality is obtained by taking into account that $|x-x'|\leq \epsilon$ and splitting the domain of integration into annuli $\{2^k\epsilon < |x-y| \leq 2^{k+1}\epsilon\}$, $k=2,3,\dots$ Finally,

\begin{equation*}
\begin{aligned}
|T(\chi_{\R^d\setminus B(x,4\epsilon)}\nu)(x')-T_\epsilon\nu(x')| &= \left|\int_{|y-x|>4\epsilon}k(x',y)d\nu(y)-\int_{|y-x'|>\epsilon}k(x',y)d\nu(y)\right|\\
 &= \Bigg|\left(\int_{|y-x|>4\epsilon, |y-x'|\leq\epsilon}k(x',y)d\nu(y)+\int_{|y-x|>4\epsilon, |y-x'|>\epsilon}k(x',y)d\nu(y)\right)\\
 & \qquad- \left( \int_{|y-x'|>\epsilon, |y-x|>4\epsilon}k(x',y)d\nu(y) + \int_{|y-x'|>\epsilon, |y-x|\leq 4\epsilon}k(x',y)d\nu(y) \right)\Bigg|\\
 &= \left|\int_{|y-x|>4\epsilon, |y-x'|\leq\epsilon}k(x',y)d\nu(y)-\int_{|y-x'|>\epsilon, |y-x|\leq 4\epsilon}k(x',y)d\nu(y)\right|
\end{aligned}
\end{equation*}

Here, the first integral vanishes, since $|x-x'|<2\epsilon$ and $|y-x|\leq \epsilon$ imply that $|y-x|<3\epsilon$. Therefore,

\begin{equation*}
\begin{aligned}
|T(\chi_{\R^d\setminus B(x,4\epsilon)}\nu)(x')-T_\epsilon\nu(x')| &\leq \left|\int_{|y-x'|>\epsilon, |y-x|\leq 4\epsilon}k(x',y)d\nu(y)\right| \\
	&\lesssim \int_{|y-x'|>\epsilon, |y-x|\leq 4\epsilon}\frac{d|\nu|(y)}{|x'-y|^{n}}\\
	&\leq \frac{|\nu|[B(x,4\epsilon)]}{\epsilon^n} \lesssim M^r_{\Phi_R}\nu(x).
\end{aligned}
\end{equation*}

This completes the proof of \refeq{eq:APorCotlar} and, hence, of the lemma.

\end{proof}
\subsection{$L^2$-boundedness of $T_{\mu, \Phi_R}$}
$ $
\par\medskip
\begin{lemma}
\label{Le:TSigmaRAcotado}
Let $R\in \Top$ and consider the measure $\sigma_R = \theta_\mu(B_R)\H^n\lfloor_{\Gamma_R}$. Then, for $1<p<\infty$, $T_{\sigma_R, \Phi_R}$ is bounded from $L^p(\sigma_R)$ to $L^p(\chi_{B_0(R)}\mu)$, with norm bounded by $C_p\theta_\mu(B_R)$. Furthermore, $T_{\sigma_R, \Phi_R}$ is bounded from $L^1(\sigma_R)$ to $L^{1,\infty}(\chi_{B_0(R)}\mu)$, with norm bounded by $C\theta_\mu(B_R)$.
\end{lemma}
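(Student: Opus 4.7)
Since $\sigma_R=\theta_\mu(B_R)\,\H^n\lfloor_{\Gamma_R}$, my plan is to reduce, after tracking the overall factor of $\theta_\mu(B_R)$, to establishing
\begin{equation*}
||T_{\Phi_R}(f\H^n\lfloor_{\Gamma_R})||_{L^p(\chi_{B_0(R)}\mu)} \lesssim \theta_\mu(B_R)^{1/p}\,||f||_{L^p(\H^n\lfloor_{\Gamma_R})},\qquad 1<p<\infty,
\end{equation*}
together with the corresponding weak-$(1,1)$ bound. Since $\Gamma_R=g_R(\R^n)$ is a bi-Lipschitz image of $\R^n$, it is uniformly $n$-rectifiable, so $T$ is bounded on $L^2(\H^n\lfloor_{\Gamma_R})$ by \cite{Tolsa-Alphas}; Nazarov-Treil-Volberg's theory of suppressed operators (see \cite{Volberg}) transfers this to $T_{\Phi_R}$, and classical Calder\'on-Zygmund theory on the AD-regular space $(\Gamma_R,\H^n\lfloor_{\Gamma_R})$ then yields the $L^p$-boundedness and weak-$(1,1)$ of $T_*$ acting on $\H^n\lfloor_{\Gamma_R}$.

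The bridge to integration against $\chi_{B_0(R)}\mu$ is the operator $M^r_{\Phi_R}$, which I control through the density estimate Lemma \ref{Le:PropiedadesPhiR}(4): $\mu(B(x,r)\cap B_R)\leq C_1\theta_\mu(B_R)r^n$ whenever $x\in B_0(R)$ and $r\geq \Phi_R(x)$. A Vitali covering argument directly yields
\begin{equation*}
M^r_{\Phi_R}\colon L^1(\H^n\lfloor_{\Gamma_R})\to L^{1,\infty}(\chi_{B_0(R)}\mu)\quad\text{with norm}\quad\lesssim\theta_\mu(B_R);
\end{equation*}
on the other hand, the $L^\infty\to L^\infty$ bound has norm $\lesssim 1$ by the AD-regularity of $\H^n\lfloor_{\Gamma_R}$. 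Marcinkiewicz interpolation then supplies norm $\lesssim \theta_\mu(B_R)^{1/p}$ at every exponent $1<p<\infty$.

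For the strong $L^p$-bound with $1<p<\infty$, I apply Lemma \ref{Le:Cotlar} with $s=1$: for $x\in B_0(R)$,
\begin{equation*}
T_{\Phi_R,*}(f\H^n\lfloor_{\Gamma_R})(x)\lesssim M^r_{\Phi_R}\bigl(T_*(f\H^n\lfloor_{\Gamma_R})\,\H^n\lfloor_{\Gamma_R}\bigr)(x) + M^r_{\Phi_R}(f\H^n\lfloor_{\Gamma_R})(x).
\end{equation*}
Taking $L^p(\chi_{B_0(R)}\mu)$-norms, the previous paragraph combined with the $L^p(\H^n\lfloor_{\Gamma_R})$-boundedness of $T_*$ produces the desired estimate, and multiplying by $\theta_\mu(B_R)$ gives the claimed operator norm for $T_{\sigma_R,\Phi_R}$. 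The weak-$(1,1)$ endpoint is the main technical obstacle, since Cotlar with $s=1$ is too crude when $T_*f$ is only of weak type $(1,1)$ and choosing $s<1$ runs into integrability issues because $\H^n(\Gamma_R)=\infty$. I would instead perform a Calder\'on-Zygmund decomposition of $f\in L^1(\sigma_R)$ at level $\lambda/\theta_\mu(B_R)$ adapted to the AD-regular measure $\sigma_R$, writing $f=g+\sum_i b_i$ with $b_i$ of zero $\sigma_R$-mean supported on balls $B_i\subset\Gamma_R$. The good part is handled by Chebyshev together with the $L^2(\sigma_R)\to L^2(\chi_{B_0(R)}\mu)$-bound just obtained; the exceptional set $\Omega=\bigcup_i 2B_i$ has $\mu(\Omega\cap B_R)\lesssim\theta_\mu(B_R)\,||f||_{L^1(\sigma_R)}/\lambda$ by ball-by-ball application of Lemma \ref{Le:PropiedadesPhiR}(4); and outside $\Omega$ the H\"ormander-type smoothness of $k_{\Phi_R}$ combined with the tail estimate \refeq{Eq:CrecLineal} for $\mu$ at scales above $\Phi_R$ dispatches the bad part. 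The crux throughout is the interplay between the AD-regular $\sigma_R$ and the non-doubling $\chi_{B_0(R)}\mu$, which the suppressing function $\Phi_R$ together with Lemma \ref{Le:PropiedadesPhiR}(4) make compatible.
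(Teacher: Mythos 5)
Your strong $L^p$ argument ($1<p<\infty$) is essentially the paper's: prove the $L^\infty\to L^\infty$ and weak-$(1,1)$ bounds for the maximal operator $M^r_{\Phi_R}$, interpolate, then feed this into the Cotlar inequality (Lemma \ref{Le:Cotlar}) with $s=1$ and the known $L^p$-boundedness of $T_*$ on the uniformly rectifiable set $\Gamma_R$. The bookkeeping of powers of $\theta_\mu(B_R)$ works out.

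For the weak-$(1,1)$ endpoint you genuinely depart from the paper, and the comparison is instructive. The paper does in fact use Lemma \ref{Le:Cotlar} with $s=\tfrac12$: it introduces a non-centered maximal operator $N^r_{\sigma_R,\Phi_R}$ (with the ball constraint $\mu(5B)\leq C_1\theta_\mu(B_R)(5r(B))^n$), proves a restricted weak $(1,1)$ for it, and then closes a self-improving (bootstrap) inequality using Kolmogorov's inequality together with the weak $(1,1)$-bound of $T$ on $L^1(\sigma_R)$. Your objection that ``$s<1$ runs into integrability issues because $\H^n(\Gamma_R)=\infty$'' correctly identifies the subtlety that makes the naive $s<1$ argument fail; but the paper's version is tailored precisely to circumvent this, because the Kolmogorov step only needs $T_*f$ to lie in the weak space $L^{2,\infty}$, not $L^1$, and this makes the relevant level set of $(T_{\sigma_R}f)^{1/2}$ have finite $\sigma_R$-measure. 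So dismissing the $s<1$ route is not warranted, and you'd be giving up a fairly slick argument.

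Your Calder\'on--Zygmund decomposition alternative is a reasonable route in spirit, but as sketched it has a gap in the exceptional-set estimate. You claim $\mu(2B_i\cap B_R)\lesssim\theta_\mu(B_R)r(B_i)^n$ ``by ball-by-ball application of Lemma \ref{Le:PropiedadesPhiR}(4),'' but that lemma requires $x\in B_R$ and, crucially, $r\geq\Phi_R(x)$. The CZ balls $B_i\subset\Gamma_R$ coming from the decomposition of $f$ can have $r(B_i)$ \emph{smaller} than the suppression scale at the points of $\supp\mu$ they meet, and at those scales you have no control on the $\mu$-density whatsoever (the whole raison d'\^etre of $\Phi_R$ is that such control fails there). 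You would have to either enlarge each small $B_i$ up to the suppression scale before summing (which requires a non-trivial regrouping to preserve the bound on $\sum r(B_i)^n$), or keep $2B_i$ in play and show, using $|k_{\Phi_R}(x,y)|\lesssim\Phi_R(x)^{-n}$, that the contribution of $T_{\Phi_R}(b_i\sigma_R)$ to the distribution function on $2B_i$ is small when $r(B_i)\ll\Phi_R$. Either patch is plausible, but without one the exceptional-set bound as stated does not follow from the cited lemma, so this step is a genuine missing piece of the proposal.
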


\begin{proof}
First of all, we observe that the maximal operator $M^r_{\sigma_R, \Phi_R}$ is bounded from $L^\infty(\sigma_R)$ to $L^\infty(\chi_{B_0(R)}\mu)$ with norm bounded by $C\theta_\mu(B_R)$. Indeed, if $f\in L^{\infty}(\sigma_R)$, and $x\in B_0(R)$

\begin{equation*}
\begin{aligned}
M^r_{\sigma_R, \Phi_R} f(x) &= \sup_{r\geq \Phi_R(x)}\frac{1}{r^n}\int_{B(x,r)\cap B_0(R)}|f|d\mu \leq ||f||_{L^{\infty}(\sigma_R)}\sup_{r\geq \Phi_R(x)}\frac{\mu[B(x,r)\cap B_0(R)]}{r^n}\\
	&\lesssim \theta_\mu(B_R)||f||_{L^{\infty}(\sigma_R)},
\end{aligned}
\end{equation*}
by $(4)$ in lemma \ref{Le:PropiedadesPhiR}. Therefore,
\begin{equation*}
||M^r_{\sigma_R,\Phi_R} f||_{L^{\infty}(\chi_{B_0(R)}\mu)} \lesssim \theta_\mu(B_R)||f||_{L^{\infty}(\sigma_R)},
\end{equation*}
as claimed.
\par\medskip
Now, let us check that $M^r_{\sigma_R,\Phi_R}$ is bounded from $L^1(\sigma_R)$ to $L^{1,\infty}(\chi_{B_0(R)}\mu)$ with norm bounded by $C\theta_\mu(B_R)$. In fact, we will prove a slightly stronger result, as we will deal with a non-centered version of $M^r_{\sigma_R,\Phi_R}$, which will be useful for technical reasons. Define, for $f\in L^1(\sigma_R)$ and $x\in\R^d$,
\begin{equation*}
N^r_{\sigma_R, \Phi_R}f(x) = \sup\frac{1}{r(B)^n}\int_{B}|f|d\sigma_R,
\end{equation*}
where the supremum is taken over all balls $B$ with $x\in B$ and such that $\mu(5B)\leq C_1\theta_\mu(B_R)(5r(B))^n$, where $C_1$ is the same constant that appears in $(4)$ of lemma \ref{Le:PropiedadesPhiR}. Clearly,
\begin{equation*}
M^r_{\sigma_R,\Phi_R}f(x) \leq N^r_{\sigma_R,\Phi_R}f(x),
\end{equation*}
so the weak $(1,1)$ inequality for $M^r_{\sigma_R,\Phi_R}$ will follow from that for $N^r_{\sigma_R, \Phi_R}$.
\par\medskip
Let $f\in L^1(\sigma_R)$, $\lambda>0$, and consider
\begin{equation*}
\Omega_\lambda = \{x\in B_0(R)\colon N^r_{\sigma_R, \Phi_R}f(x) >\lambda\}
\end{equation*}
By definition of $N^r_{\sigma_R, \Phi_R}$, for every $x\in\Omega_\lambda$, there exists a ball $B_x$ containing $x$ with $\mu(5B_x)\leq C_1\theta_\mu(B_R)(5r(B))^n$ and such that
\begin{equation*}
\frac{1}{r(B_x)^n}\int_{B_x}|f|d\sigma_R>\lambda,
\end{equation*}
which is equivalent to
\begin{equation}
\label{eq:Debil11}
r(B_x)^n < \frac{1}{\lambda}\int_{B_x}|f|d\sigma_R.
\end{equation}
Now, applying the $5r$-covering theorem, we may extract a countable and disjoint subfamily $\{B_i\}$ of $\{B_x\}_{x\in\Omega_\lambda}$ such that the balls $\{5B_i\}$ cover $\Omega_\lambda$.  Then, we have 
\begin{equation}
\label{eq:Weak11NR}
\begin{aligned}
\mu(\Omega_\lambda) &\leq \sum_i \mu(5B_i) \leq \sum_i C_1\theta_\mu(B_R)(5r(B_{i}))^n \lesssim \theta_\mu(B_R)\sum_i r(B_{i})^n\\
	&\leq \theta_\mu(B_R) \sum_i \frac{1}{\lambda}\int_{B_{i}}|f|d\sigma_R \leq \frac{\theta_\mu(B_R)}{\lambda}\int_{\Omega_\lambda}|f|d\sigma_R \leq \frac{\theta_\mu(B_R)}{\lambda}||f||_{L^1(\sigma_R)},
\end{aligned}
\end{equation}
which proves that $N^r_{\sigma_R,\Phi_R}$ (and also  $M^r_{\sigma_R,\Phi_R}$) is bounded from $L^1(\sigma_R)$ to $L^{1,\infty}(\chi_{B_0(R)}\mu)$ with norm bounded by $C\theta_\mu(B_R)$. Then, Marcinkiewicz's Interpolation Theorem applies and so, for $1<p<\infty$ $M^r_{\sigma_R,\Phi_R}$ is bounded from $L^p(\sigma_R)$ to $L^p(\chi_{B_0(R)}\mu)$ with norm bounded by $C_p\theta_\mu(B_R)$

\par\medskip

Notice that \refeq{eq:cotlar} in Lemma \ref{Le:Cotlar} can be restated as
\begin{equation}
\label{eq:ReCotlar}
T_{\sigma_R,\Phi_R,*}f(x) \leq C_s [M^r_{\sigma_R,\Phi_R}((T_{\H^n\lfloor_{\Gamma_R}}f)^s)(x)^{\frac{1}{s}}+M^r_{\sigma_R,\Phi_R}f(x)].
\end{equation}
Then, taking $s=1$ and using the $L^p(\sigma_R)\rightarrow L^p(\chi_{B_0(R)}\mu)$-boundedness of $M^r_{\sigma_R,\Phi_R}$, we obtain that $T_{\sigma_R,\Phi_R,*}$ is bounded from $L^p(\sigma_R)$ to $L^p(\chi_{B_0(R)}\mu)$ with norm bounded by $C_p\theta_\mu(B_R)$.
\par\medskip
To deal with the weak $(1,1)$ case, we will need to work a little harder. Going back to  \refeq{eq:ReCotlar}, with $s=\frac{1}{2}$, we get that for $f\in L^1(\sigma_R)$,
\begin{equation*}
T_{\sigma_R,\Phi_R,*}f(x) \leq C[M^r_{\sigma_R,\Phi_R}((T_{\H^n\lfloor_{\Gamma_R}}f)^\frac{1}{2})(x)^2+M^r_{\sigma_R,\Phi_R}f(x)]
\end{equation*}
and so, for $\lambda>0$,

\begin{equation*}
\begin{aligned}
\mu(\{x\in B_0(R)\colon T_{\sigma_R,\Phi_R,*}f(x)>\lambda\}) &\leq \mu\left(\left\{x\in B_0(R)\colon M^r_{\sigma_R,\Phi_R}((T_{\H^n\lfloor_{\Gamma_R}}f)^\frac{1}{2})(x)^2 > \frac{\lambda}{2C} \right\}\right) \\
	& \qquad + \mu\left(\left\{x\in B_0(R)\colon M^r_{\sigma_R,\Phi_R}f(x) > \frac{\lambda}{2C} \right\}\right) \\
	&\leq \mu\left(\left\{x\in B_0(R)\colon M^r_{\sigma_R,\Phi_R}((T_{\sigma_R}f)^\frac{1}{2})(x) > \left(\frac{\lambda}{2C}\right)^{\frac{1}{2}}\theta_\mu (B_R)^{\frac{1}{2}} \right\}\right) \\
	& \qquad + \mu\left(\left\{x\in B_0(R)\colon M^r_{\sigma_R,\Phi_R}f(x) > \frac{\lambda}{2C} \right\}\right)
\end{aligned}
\end{equation*}

Here, the second term is bounded by $C\frac{\theta_\mu(B_R)}{\lambda}||f||_{L^1(\sigma_R)}$ because of the weak $(1,1)$-inequality for $M^r_{\sigma_R,\Phi_R}$. To deal with the first term, we will use the weak $(1,1)$-inequality \refeq{eq:Weak11NR} for $N^r_{\sigma_R,\Phi_R}$. Denote
\begin{equation*}
\Omega = \left\{x\in B_0(R)\colon N^r_{\sigma_R,\Phi_R}((T_{\sigma_R}f)^{\frac{1}{2}})(x)>\left(\frac{\lambda}{2C}\right)^{\frac{1}{2}}\theta_\mu (B_R)^{\frac{1}{2}}\right\}
\end{equation*}
so that
\begin{equation*}
\begin{aligned}
\mu\left(\left\{x\in B_0(R)\colon M^r_{\sigma_R,\Phi_R}((T_{\sigma_R}f)^\frac{1}{2})(x) > \left(\frac{\lambda}{2C}\right)^{\frac{1}{2}}\theta_\mu (B_R)^{\frac{1}{2}} \right\}\right) &\leq \mu(\Omega) \lesssim \frac{\theta_\mu(B_R)}{\lambda^\frac{1}{2}\theta_\mu(B_R)^{\frac{1}{2}}}\int_{\Omega}|T_{\sigma_R}f|^{\frac{1}{2}}d\mu \\
	&\lesssim \frac{\theta_\mu(B_R)^{\frac{1}{2}}}{\lambda^{\frac{1}{2}}}\mu(\Omega)^{\frac{1}{2}}||T_{\sigma_R}f||^{\frac{1}{2}}_{L^{1,\infty}(\mu)}\\
	&=\mu(\Omega)^{\frac{1}{2}}\frac{1}{\lambda^{\frac{1}{2}}}||T_{\sigma_R}f||_{L^{1,\infty}(\sigma_R)}^{\frac{1}{2}},
\end{aligned}
\end{equation*}
which implies that $\mu(\Omega)\lesssim \frac{1}{\lambda}||T_{\sigma_R}f||_{L^{1,\infty}(\sigma_R)}$, and therefore
\begin{equation*}
\mu\left(\left\{x\in B_0(R)\colon M^r_{\sigma_R,\Phi_R}((T_{\sigma_R}f)^\frac{1}{2})(x) > \frac{\lambda^{\frac{1}{2}}}{\sqrt{2C}}\theta_\mu(B_R) \right\}\right) \lesssim \frac{1}{\lambda}||T_{\sigma_R}f||_{L^{1,\infty}(\sigma_R)} \lesssim \frac{\theta_\mu(B_R)}{\lambda}||f||_{L^1(\sigma_R)},
\end{equation*}
where we used the fact that $T_{\sigma_R}$ is bounded from $L^1(\sigma_R)$ to $L^{1,\infty}(\sigma_R)$ with norm bounded by $C\theta_\mu(B_R)$. This completes the proof of the lemma.

\end{proof}

We recall here a lemma that is also used at \cite{Tolsa-pubMat} that will be useful. Its proof is based on the combined use of both Marcinkiewicz's and Riesz-Thorin's Interpolation Theorems.

\begin{lemma}
Let $\tau$ be a Radon measure in $\R^d$ and let $T$ be a linear operator that is bounded in $L^2(\tau)$ with norm $N_2$. Suppose further that both $T$ and its adjoint $T^*$ are bounded from $L^1(\tau)$ to $L^{1,\infty}(\tau)$ with norm bounded by $N_1$. Then $N_2\leq cN_1$, where $c$ is an absolute constant.
\end{lemma}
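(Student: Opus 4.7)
The point of the statement is that a weak $(1,1)$ bound together with its dual cannot be strictly weaker than the $L^2$ bound: any $L^2$ bound forced on $T$ by Marcinkiewicz/Riesz--Thorin will depend on $N_1$ alone. The plan is to bootstrap: use Marcinkiewicz with the pair $(L^1 \to L^{1,\infty}, L^2 \to L^2)$ to get genuine $L^p$ bounds for $p$ between $1$ and $2$ (applied to both $T$ and $T^*$), then use duality to transfer the $T^*$ bound into an $L^{p'}$ bound for $T$, and finally run Riesz--Thorin between $L^p$ and $L^{p'}$ to recover an $L^2$ bound of the form $N_2 \le c\, N_1^{\alpha} N_2^{1-\alpha}$ with $\alpha>0$.

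Concretely, fix some $p\in (1,2)$, say $p=3/2$. Marcinkiewicz interpolation applied between the weak $(1,1)$ bound $N_1$ for $T$ and the strong $(2,2)$ bound $N_2$ gives that $T$ is bounded on $L^p(\tau)$ with norm at most $C\,N_1^{2/p-1}N_2^{2-2/p}$. The same interpolation applied to $T^*$ gives the same bound for $T^*$ on $L^p(\tau)$. By duality, the second bound on $T^*$ is equivalent to a bound on $T \colon L^{p'}(\tau)\to L^{p'}(\tau)$ with the same constant $C\,N_1^{2/p-1}N_2^{2-2/p}$.

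Now apply Riesz--Thorin interpolation to $T$ between $L^p(\tau)$ and $L^{p'}(\tau)$. Since $2$ is the harmonic mean of $p$ and $p'$, the interpolation parameter is $\lambda=1/2$, and we obtain
\begin{equation*}
\|T\|_{L^2(\tau)\to L^2(\tau)} \;\le\; \bigl(C\,N_1^{2/p-1}N_2^{2-2/p}\bigr)^{1/2}\cdot\bigl(C\,N_1^{2/p-1}N_2^{2-2/p}\bigr)^{1/2} \;=\; C\,N_1^{2/p-1}\,N_2^{2-2/p}.
\end{equation*}
By definition of $N_2$ this reads $N_2 \le C\,N_1^{2/p-1}\,N_2^{2-2/p}$, and since $2-2/p<1$ we may absorb the $N_2$-factor on the left (assuming $N_2<\infty$, which is in the hypotheses): dividing gives $N_2^{2/p-1}\le C\,N_1^{2/p-1}$, i.e.\ $N_2\le c\,N_1$ with $c=C^{p/(2-p)}$ an absolute constant.

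The step that needs a little care is that the exponents $\alpha=2/p-1$ and $1-\alpha=2-2/p$ produced by Marcinkiewicz must satisfy $0<\alpha<1$ so that the absorption step at the end is legitimate and gives an absolute constant — that forces $1<p<2$, which is exactly the range in which Marcinkiewicz applies. The only other point to be careful about is whether $N_2$ is finite to begin with, which is part of the hypothesis, so the absorption is not circular.
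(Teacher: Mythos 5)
Your argument is correct and is precisely the approach the paper itself indicates (the paper only sketches: "Its proof is based on the combined use of both Marcinkiewicz's and Riesz--Thorin's Interpolation Theorems"). You apply Marcinkiewicz to $T$ and $T^*$ to get $L^p$ bounds for a fixed $p\in(1,2)$ with constant $C\,N_1^{2/p-1}N_2^{2-2/p}$, dualize the $T^*$ bound into an $L^{p'}$ bound for $T$, interpolate back to $L^2$ via Riesz--Thorin, and absorb the $N_2^{2-2/p}$ factor — exactly the intended reasoning, with the exponents and the absorption step carried out correctly.
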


\begin{lemma}
\label{Le:TPhiRMaxAcotadoL2}
$T_{\mu,\Phi_R}$ is bounded on $L^2(\chi_{B_0(R)}\mu)$ with norm bounded by $C\theta_\mu(B_R)$.
\end{lemma}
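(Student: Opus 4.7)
The plan is to establish the weak-$(1,1)$ estimate
$$
(\chi_{B_0(R)}\mu)\bigl\{x : |T_{\mu,\Phi_R}f(x)|>\lambda\bigr\}\lesssim \frac{\theta_\mu(B_R)}{\lambda}\,\|f\|_{L^1(\chi_{B_0(R)}\mu)}
$$
for $T_{\mu,\Phi_R}$, and then to invoke the interpolation lemma stated just above to upgrade it to the $L^2$-bound. Since the suppressed kernel $k_{\Phi_R}$ is antisymmetric, the analogous bound for the adjoint is automatic, and the hypotheses of that interpolation lemma are met.

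To prove the weak-$(1,1)$ bound I would transfer from the already-controlled measure $\sigma_R=\theta_\mu(B_R)\,\H^n\lfloor_{\Gamma_R}$, replacing $f\mu$ by a companion measure $g\sigma_R$ supported on $\Gamma_R$ that matches $f\mu$ at the scale of the regularized stopping cells. For each $Q\in\Reg(R)$, property (2) of Lemma \ref{Th:CoronaDesc} supplies a point $x_Q\in\Gamma_R$ with $\dist(x_Q,Q)\lesssim \l(Q)$; I would set $g$ to be constant on a piece $E_Q\subset B(x_Q,c\l(Q))\cap\Gamma_R$ of $\H^n$-measure comparable to $\l(Q)^n$, chosen so that $\int g\chi_{E_Q}\,d\sigma_R=\int f\chi_Q\,d\mu$. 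On the remainder of $\Gamma_R\cap B_0(R)$ one similarly redistributes $f\chi_{W_R}\mu$. Property (2) of Lemma \ref{Le:PropCubosRegulares} ensures that the $E_Q$ can be taken pairwise disjoint, and by construction $\|g\|_{L^1(\sigma_R)}\approx \|f\|_{L^1(\chi_{B_0(R)}\mu)}$.

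The central technical step is the pointwise comparison
$$
\bigl|T_{\Phi_R,\epsilon}(f\mu)(x)-T_{\Phi_R,\epsilon}(g\sigma_R)(x)\bigr|\lesssim \theta_\mu(B_R)\bigl[M^r_{\mu,\Phi_R}f(x)+M^r_{\sigma_R,\Phi_R}g(x)\bigr],
$$
valid for all $x\in B_0(R)$ and all $\epsilon>0$. For $Q\in\Reg(R)$ with $\dist(x,Q)\gtrsim \l(Q)$ I would rewrite the difference of the integrals over $Q$ and $E_Q$, using mass-matching, as an integral of $k_{\Phi_R}(x,y)-k_{\Phi_R}(x,x_Q)$ against $f\chi_Q\mu-g\chi_{E_Q}\sigma_R$; the Calder\'on--Zygmund smoothness of $k_{\Phi_R}$ produces an integrand of size $\l(Q)/|x-y|^{n+1}$, and summing over such $Q$ while invoking property (4) of Lemma \ref{Le:PropCubosRegulares} together with \refeq{Eq:CrecLineal} dominates the contribution by the maximal functions. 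For $Q$ close to $x$ one necessarily has $\l(Q)\lesssim \Phi_R(x)$, so smoothness is discarded and one uses the brute bound $|k_{\Phi_R}(x,y)|\lesssim \Phi_R(x)^{-n}$ together with the growth of $\mu$ at scales $\geq\Phi_R(x)$ furnished by property (4) of Lemma \ref{Le:PropiedadesPhiR}, plus the analogous bound for $\sigma_R$.

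Assembling these ingredients, the weak-$(1,1)$ estimate for $T_{\mu,\Phi_R}$ then follows from Lemma \ref{Le:TSigmaRAcotado} applied to $g$, together with a weak-$(1,1)$ bound for $M^r_{\mu,\Phi_R}$ proved exactly as \refeq{eq:Weak11NR} but with property (4) of Lemma \ref{Le:PropiedadesPhiR} playing the role of the polynomial growth of $\sigma_R$; the interpolation lemma then yields the $L^2$-bound. I expect the main obstacle to be the construction of $g$ together with the uniform-in-$\epsilon$ verification of the pointwise comparison, since three scales---the truncation $\epsilon$, the suppression $\Phi_R(x)$, and the side length $\l(Q)$---must be controlled simultaneously, and the thin-boundary property of the David--Mattila cells must be exploited to patch the local pieces together without loss.
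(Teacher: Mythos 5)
Your opening moves agree with the paper: reduce to a weak-$(1,1)$ estimate via the interpolation lemma and antisymmetry, and construct on $\Gamma_R$ a companion to $f\mu$ that matches cell-by-cell integrals over the regularized cells (the paper does the same with the functions $\gamma_i$ and the mean-zero measures $\nu_i$). The gap lies in the central pointwise comparison. If you expand
\[
T_{\Phi_R,\epsilon}(f\mu)(x)-T_{\Phi_R,\epsilon}(g\sigma_R)(x)=\sum_{Q\in\Reg(R)}T_{\Phi_R,\epsilon}\nu_Q(x),\qquad \nu_Q=f\chi_Q\mu-g\chi_{E_Q}\sigma_R,
\]
and bound each far term using $\int d\nu_Q=0$ together with the Calder\'on--Zygmund smoothness, you get $|T_{\Phi_R,\epsilon}\nu_Q(x)|\lesssim \ell(Q)\,\|\nu_Q\|\,\dist(x,Q)^{-n-1}$. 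But $\Reg(R)$ behaves like a Whitney decomposition adapted to $d_R$: for a cell $Q$ at distance $d\gtrsim\Phi_R(x)$ from $x$ one can perfectly well have $\ell(Q)\approx d_R(z_Q)\approx d$, so the gain $\ell(Q)/\dist(x,Q)$ is only $O(1)$. Grouping cells by dyadic distance $2^j\Phi_R(x)$, each annulus contributes an amount comparable to $M^r_{\mu,\Phi_R}f(x)$, and the sum over $j$ diverges logarithmically in the depth of the tree. In effect you are trying to dominate $\int |f(y)|\,|x-y|^{-n}\,d\mu(y)$ by a maximal function, which fails; \refeq{Eq:CrecLineal} does not close the sum because the kernel that survives after the triangle inequality decays like $|x-y|^{-n}$, not $|x-y|^{-n-1}$. (The extraneous factor $\theta_\mu(B_R)$ you placed on the right-hand side is a smaller, separate issue, harmless only because $\theta_\mu(B_R)\lesssim c_0$.)

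The paper avoids this by never attempting a pointwise bound. It runs a genuine Calder\'on--Zygmund good/bad decomposition of $f$ with $b=\sum_i f\chi_{Q_i}$, sets $\nu_i=(f\chi_{Q_i})\mu-\gamma_i\H^n_{\Gamma_R}$, handles $g\mu$ and $\sum_i\gamma_i\H^n_{\Gamma_R}$ via Lemma \ref{Le:TSigmaRAcotado}, and treats $T_{\Phi_R}\bigl(\sum_i\nu_i\bigr)$ by \emph{Chebyshev}: one only needs the $L^1$ estimate $\int_{B_0(R)}|T_{\Phi_R}\nu_i|\,d\mu\lesssim\theta_\mu(B_R)\int_{Q_i}|f|\,d\mu$ for each fixed $i$. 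There the far contribution is controlled by integrating the Calder\'on--Zygmund gain in the outer variable $x$ --- this produces the convergent quantity $\l(Q_i')\int_{B_0(R)\setminus 2B_{Q_i'}}|x-z_{Q_i'}|^{-n-1}\,d\mu(x)\lesssim\theta_\mu(B_R)$ --- while the near contribution is split into three pieces controlled by the brute kernel bound, by $\dmu(Q_i,Q_i')$, and by the $L^2$ bound of $T_{\sigma_R,\Phi_R}$. These $L^1$ estimates are linear in $\int_{Q_i}|f|\,d\mu$ and therefore sum over $i$; the pointwise estimates you propose do not sum over scales. You should drop the pointwise comparison and replace it with the Chebyshev step applied to a Calder\'on--Zygmund good/bad decomposition of $f$.
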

\begin{proof}
Since $T_{\mu,\Phi_R}$ is antisymmetric, by the previous lemma, we can limit ourselves to prove that it is bounded from $L^1(\chi_{B_0(R)}\mu)$ to $L^{1,\infty}(\chi_{B_0(R)}\mu)$ with norm bounded by $C\theta_\mu(B_R)$.
\par\medskip
Let $f\in L^1(\chi_{B_0(R)})$ and denote $\Reg(R) = \{Q_i\}_{i=1}^{\infty}$, where we assume that the side-lengths $\l(Q_i)$ are non-increasing. Arguing as in $(4)$ of lemma $\ref{Le:PropCubosRegulares}$, it is easy to check that every cell $Q_i$ is contained in a cell $Q_i'$ such that $\theta_\mu(Q_i')\lesssim \theta_\mu(B_R)$, $\dmu(Q_i,Q_i')\lesssim \theta_\mu(B_R)$, $Q_i'\cap\Gamma_R\neq \emptyset$ and $\H^n(Q_i'\cap\Gamma_R)\approx \l(Q_i')^n$.

\par\medskip
Set
\begin{equation*}
g = f \chi_{B_0(R)\setminus\bigcup_i Q_i}, \;\; b = \sum_i f\chi_{Q_i}
\end{equation*}
so that $f=g+b$. Since $B_0(R)\setminus\bigcup_i Q_i\subset \Good(R)$ and this is contained in $\Gamma_R$ (up to a set of $\mu$-measure zero), by the Radon-Nikodym theorem we obtain that
\begin{equation*}
\mu\lfloor_{B_0(R)\setminus\bigcup_i Q_i} = \eta\H^n_{\Gamma_R},
\end{equation*}
where $\eta$ is some function with $0\leq \eta\leq C\theta_\mu[B_0(R)]\lesssim \theta_\mu(B_R)$. Then, by lemma \ref{Le:TSigmaRAcotado}, we have that, for $\lambda>0$,
\begin{equation}
\label{eq:78}
\begin{aligned}
\mu(\{x\in B_0(R)\colon |T_{\mu, \Phi_R}g(x)|>\lambda\}) &= \mu(\{x\in B_0(R)\colon |T_{\H^n_{\Gamma_R},\Phi_R}(g\eta)(x)|>\lambda\}) \\
	&= \mu\left(\left\{x\in B_0(R)\colon |T_{\sigma_R,\Phi_R}(g\eta)(x)|>\theta_\mu(B_R)\lambda\right\}\right) \\
	&\lesssim \frac{1}{\lambda}||g\eta||_{L^1(\sigma_R)} = \frac{\theta_\mu(B_R)}{\lambda}||g\eta||_{L^1(\H^n\lfloor_{\Gamma_R})} = \frac{\theta_\mu(B_R)}{\lambda}||f||_{L^1(\mu)}
\end{aligned}
\end{equation}

Now, to deal with $T_{\mu,\Phi_R}b$, we define, for every $i\geq 1$
\begin{equation*}
\gamma_i(x) = \left(\frac{1}{\H^n(B_{Q_i'}\cap \Gamma_R)}\int_{Q_i}fd\mu \right)\chi_{B_{Q_i'}\cap \Gamma_R}(x), \;\; \nu_i = (f\chi_{Q_i})\mu - \gamma_i \H^n_{\Gamma_R},
\end{equation*}
so that $\nu_i$ is supported on $B_{Q_i'}$ and satisfies $\int d\nu_i = 0$ , and we write
\begin{equation*}
b\mu = \sum_i \nu_i + \sum_i \gamma_i\H^n_{\Gamma_R}
\end{equation*}
so that
\begin{equation*}
T_{\mu,\Phi_R}b = T_{\Phi_R}(b\mu) = T_{\Phi_R}\left(\sum_i \nu_i\right) + T_{\Phi_R}\left(\sum_i\gamma_i \H^n_{\Gamma_R}\right).
\end{equation*}

Now, again by lemma \ref{Le:TSigmaRAcotado}, we get
\begin{equation}
\label{eq:82}
\begin{aligned}
\mu\left(\left\{x\in B_0(R)\colon \left|T_{\Phi_R}\left(\sum_i\gamma_i \H^n_{\Gamma_R}\right)(x)\right|>\lambda\right\}\right) &= \mu\left(\left\{x\in B_0(R)\colon \left|T_{\Phi_R,\sigma_R}\left(\sum_i\gamma_i \right)(x)\right|>\theta_\mu(B_R)\lambda\right\}\right)\\
	&\lesssim \frac{1}{\lambda}\left|\left|\sum_{i}\gamma_i\right|\right|_{L^1(\sigma_R)}\leq \frac{\theta_\mu(B_R)}{\lambda}\sum_i \int|\gamma_i|d\H^n_{\Gamma_R}\\
	&\leq \frac{\theta_\mu(B_R)}{\lambda}||f||_{L^1(\mu)}.
\end{aligned}
\end{equation}

Finally, to deal with the term $\displaystyle{T_{\Phi_R}\left(\sum_i \nu_i\right)}$, we apply Chebishev's inequality to get

\begin{equation}
\label{eq:83}
\begin{aligned}
\mu\left(\left\{x\in B_0(R)\colon \left|T_{\Phi_R}\left(\sum_i \nu_i\right)(x)\right|>\lambda\right\}\right) &\leq \frac{1}{\lambda}\int_{B_0(R)}\left|T_{\Phi_R}\left(\sum_i\nu_i\right)\right|d\mu\\
	&= \frac{1}{\lambda}\left(\sum_{i}\int_{2B_{Q_i'}}|T_{\Phi_R}\nu_i|d\mu + \int_{B_0(R)\setminus 2B_{Q_i'}}|T_{\Phi_R}\nu_i|d\mu\right)
\end{aligned}
\end{equation}

Now, since $\int d\nu_i = 0$, for $x\not\in 2B_{Q_i'}$ we have

\begin{equation*}
\begin{aligned}
|T_{\Phi_R}\nu_i(x)| &= \left|\int_{B_{Q_i'}} k_{\Phi_R}(x,y)d\nu_i(y)\right| = \left|\int_{B_{Q_i'}}[k_{\Phi_R}(x,y)-k_{\Phi_R}(x,z_{Q_i'})]d\nu_i(y)\right|\\
	&\lesssim \int_{B_{Q_i'}}\frac{|y-z_{Q_i'}|}{|x-z_{Q_i'}|^{n+1}}d|\nu_i|(y)\lesssim \frac{\l(Q_i')||\nu_i||}{|x-z_{Q_i'}|^{n+1}}
\end{aligned}
\end{equation*}
and so
\begin{equation}
\label{eq:85}
\int_{\R^d\setminus 2B_{Q_i'}}|T_{\Phi_R}\nu_i|d\mu \lesssim \int_{B_0(R)\setminus 2B_{Q_i'}}\frac{\l(Q_i')||\nu_i||}{|x-z_{Q_i'}|^{n+1}}d\mu\lesssim \theta_\mu(B_R)||\nu_i||\lesssim \theta_\mu(B_R)\int_{Q_i}|f|d\mu.
\end{equation}

On the other hand,
\begin{equation*}
\begin{aligned}
\int_{2B_{Q_i'}}|T_{\Phi_R}\nu_i|d\mu &\leq \int_{2B_{Q_i'}}|T_{\Phi_R}((f\chi_{Q_i})\mu)|d\mu + \int_{2B_{Q_i'}}|T_{\Phi_R}( \gamma_i \H^n_{\Gamma_R})|d\mu\\
	&\leq \int_{Q_i}|T_{\Phi_R}((f\chi_{Q_i})\mu)|d\mu + \int_{2B_{Q_i'}\setminus Q_i}|T_{\Phi_R}((f\chi_{Q_i})\mu)|d\mu + \int_{2B_{Q_i'}}|T_{\Phi_R}( \gamma_i \H^n_{\Gamma_R})|d\mu\\
	&=\I_1 + \I_2 + \I_3.
\end{aligned}
\end{equation*}

Now, to bound $\I_1$ we use the fact that for all $x\in Q_i$, $\Phi_R(x)\geq \l(Q_i)$, by $(3)$ in lemma \ref{Le:PropiedadesPhiR}, and so $|k_{\Phi_R}(x,y)|\lesssim \l(Q_i)$ for all $x,y\in Q_i$. Hence,
\begin{equation*}
|T_{\Phi_R}((f\chi_{Q_i})\mu)(x)|\lesssim \frac{1}{\l(Q_i)^n}\int_{Q_i}|f|d\mu
\end{equation*}
and so
\begin{equation*}
\I_1 \lesssim \frac{\mu(Q_i)}{\l(Q_i)^n}\int_{Q_i}|f|d\mu \lesssim \theta_\mu(B_R)\int_{Q_i}|f|d\mu,
\end{equation*}
by $(4)$ in lemma \ref{Le:PropiedadesPhiR}.

\par\medskip

To bound $\I_2$, we observe that for $x\in 2B_{Q_i'}\setminus Q_i$,
\begin{equation*}
|T_{\Phi_R}((\chi_{Q_i}f)\mu)(x)| = \left|\int_{Q_i}k_{\Phi_R}(x,y)f(y)d\mu(y)\right|\lesssim \frac{1}{|x-z_{Q_i}|^n}\int_{Q_i}|f|d\mu
\end{equation*}
and so
\begin{equation*}
\begin{aligned}
\I_2 &= \int_{2B_{Q_i'}\setminus Q_i}|T_{\Phi_R}((f\chi_{Q_i})\mu)|d\mu \lesssim \int_{Q_i}|f|d\mu\int_{2B_{Q_i'}\setminus Q_i}\frac{1}{|x-z_{Q_i}|^n}d\mu(x) \\
	&= \dmu(Q_i,Q_i')\int_{Q_i}|f|d\mu \lesssim \theta_\mu(B_R)\int_{Q_i}|f|d\mu.
\end{aligned}
\end{equation*}

Finally, by lemma \ref{Le:TSigmaRAcotado}
\begin{equation*}
\begin{aligned}
\I_3 &= \int_{2B_{Q_i'}}|T_{\Phi_R}( \gamma_i \H^n_{\Gamma_R})|d\mu \leq \mu(2B_{Q_i'})^{\frac{1}{2}}\left(\int_{2B_{Q_i'}}|T_{\Phi_R}( \gamma_i \H^n_{\Gamma_R})|^2d\mu\right)^{\frac{1}{2}} \\
	 &\leq \frac{1}{\theta_\mu(B_R)}\mu(2B_{Q_i'})^{\frac{1}{2}}\left(\int|T_{\Phi_R}( \gamma_i \sigma_R)|^2d\mu\right)^{\frac{1}{2}}\lesssim \mu(2B_{Q_i'})^\frac{1}{2}||\gamma_i||_{L^2(\sigma_R)}\\
	 &\leq \mu(Q_i')^{\frac{1}{2}}\theta_\mu(B_R)^{\frac{1}{2}}\frac{1}{\H^n(Q_i'\cap\Gamma_R)}\int_{Q_i}|f|d\mu \lesssim \theta_\mu(B_R)\int_{Q_i}|f|d\mu.
\end{aligned}
\end{equation*}

Gathering the estimates for $\I_1$, $\I_2$ and $\I_3$, we obtain
\begin{equation*}
\int_{2B_{Q_i'}}|T_{\Phi_R}\nu_i|d\mu\lesssim \theta_\mu(B_R)\int_{Q_i}|f|d\mu,
\end{equation*}
and so, going back to \refeq{eq:83} and also taking into account \refeq{eq:85}, we obtain
\begin{equation*}
\mu\left(\left\{x\in B_0(R)\colon \left|T_{\Phi_R}\left(\sum_i \nu_i\right)(x)\right|>\lambda\right\}\right) \lesssim \frac{1}{\lambda}\int|f|d\mu
\end{equation*}

This, together with \refeq{eq:78} and \refeq{eq:82}, imply the weak $(1,1)$ inequality
\begin{equation*}
\mu\left(\left\{x\in B_0(R)\colon |T_{\mu,\Phi_R}f(x)|>\lambda\right\}\right) \lesssim \frac{\theta_\mu(B_R)}{\lambda}||f||_{L^1(\mu)}
\end{equation*}
that we were looking for.
\end{proof}

\subsection{$L^2$-boundedness of $T_{\Phi_R, \mu, *}$.}
\begin{lemma}
\label{Le:TPhiRMuAcotado}
For $R\in\Top$, $T_{\Phi_R,\mu,*}$ is bounded in $L^2(\chi_{B_0(R)}\mu)$ with norm bounded by $c\theta_\mu(B_R)$.
\end{lemma}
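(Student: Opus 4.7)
The plan is to upgrade the $L^2$-boundedness of the suppressed operator $T_{\mu,\Phi_R}$ established in Lemma \ref{Le:TPhiRMaxAcotadoL2} to an $L^2$-boundedness of its maximal version $T_{\Phi_R,\mu,*}$ by invoking the Cotlar-type inequality of Theorem \ref{Th:CotlarNTV}. The crucial observation is that $\Phi_R$ plays exactly the role of the radius function $\mathcal{R}$ from that theorem, once the measure is normalized correctly.

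The setup is as follows. Set $\sigma = \chi_{B_0(R)}\mu$ and pass to the rescaled measure $\tilde{\sigma} = \theta_\mu(B_R)^{-1}\sigma$. Part (4) of Lemma \ref{Le:PropiedadesPhiR} says that for every $x\in B_R$ and every $r\geq \Phi_R(x)$ we have $\tilde{\sigma}[B(x,r)]\leq C_1\,r^n$. Fixing any constant $C_0>C_1$ and defining
\[
\mathcal{R}(x)=\sup\{r>0:\tilde{\sigma}[B(x,r)]>C_0\,r^n\},
\]
we therefore get $\mathcal{R}(x)\leq \Phi_R(x)$ for all $x$. Combined with property (2) of the suppressed kernel this yields
\[
|k_{\Phi_R}(x,y)|\leq c(n)\min\!\Bigl\{\tfrac{1}{\Phi_R(x)^n},\tfrac{1}{\Phi_R(y)^n}\Bigr\}\leq c(n)\min\!\Bigl\{\tfrac{1}{\mathcal{R}(x)^n},\tfrac{1}{\mathcal{R}(y)^n}\Bigr\},
\]
which is exactly the kernel hypothesis of Theorem \ref{Th:CotlarNTV}. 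Moreover, since $T_{\tilde{\sigma},\Phi_R}f=\theta_\mu(B_R)^{-1}T_{\sigma,\Phi_R}f$ and $\|\cdot\|_{L^2(\tilde{\sigma})}=\theta_\mu(B_R)^{-1/2}\|\cdot\|_{L^2(\sigma)}$, Lemma \ref{Le:TPhiRMaxAcotadoL2} is equivalent to the $L^2(\tilde{\sigma})$-boundedness of $T_{\tilde{\sigma},\Phi_R}$ with absolute constant.

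With all hypotheses verified, Theorem \ref{Th:CotlarNTV} applied to $S=T_{\Phi_R}$ and the measure $\tilde{\sigma}$ yields the pointwise bound
\[
T_{\Phi_R,*}(f\tilde{\sigma})(x)\lesssim \tilde{M}_{\tilde{\sigma}}\bigl(T_{\Phi_R}(f\tilde{\sigma})\bigr)(x)+\tilde{M}_{\tilde{\sigma},3/2}f(x).
\]
The operators $\tilde{M}_{\tilde{\sigma}}$ and $\tilde{M}_{\tilde{\sigma},3/2}$ are bounded on $L^2(\tilde{\sigma})$ with absolute constants (this is the standard $3r$-covering maximal inequality, which does not require doubling), and $T_{\tilde{\sigma},\Phi_R}$ is bounded on $L^2(\tilde{\sigma})$ by the previous paragraph. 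Consequently $T_{\Phi_R,*}$ acting on measures of the form $f\tilde{\sigma}$ is bounded with absolute constant.

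Finally, I rescale back: for any $f\in L^2(\chi_{B_0(R)}\mu)$ (viewed as supported in $B_0(R)$) one has $T_{\Phi_R,\mu,*}f=T_{\Phi_R,*}(f\sigma)=\theta_\mu(B_R)\,T_{\Phi_R,*}(f\tilde{\sigma})$, whence
\[
\|T_{\Phi_R,\mu,*}f\|_{L^2(\chi_{B_0(R)}\mu)}=\theta_\mu(B_R)^{3/2}\,\|T_{\Phi_R,*}(f\tilde{\sigma})\|_{L^2(\tilde{\sigma})}\lesssim \theta_\mu(B_R)^{3/2}\,\|f\|_{L^2(\tilde{\sigma})}=c\,\theta_\mu(B_R)\,\|f\|_{L^2(\chi_{B_0(R)}\mu)},
\]
which is the claim. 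There is really no serious obstacle here; all the hard work has been done in establishing the strong-type bound for $T_{\mu,\Phi_R}$ and in setting up the geometric control provided by $\Phi_R$ via Lemma \ref{Le:PropiedadesPhiR}. The only mild subtlety is keeping careful track of the factor $\theta_\mu(B_R)$ through the rescaling, which is why the Cotlar inequality is applied on $\tilde{\sigma}$ rather than directly on $\sigma$.
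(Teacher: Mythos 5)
Your proposal is correct and takes essentially the same route as the paper: the paper's proof is the one‑liner ``direct consequence of Theorem~\ref{Th:CotlarNTV} and Lemma~\ref{Le:TPhiRMaxAcotadoL2}, taking $S=T_{\Phi_R}$, $\sigma=\chi_{B_0(R)}\mu$ and $C_0\approx\theta_\mu(B_R)$,'' and your version simply unpacks that shorthand by passing explicitly to the rescaled measure $\tilde\sigma=\theta_\mu(B_R)^{-1}\sigma$, which makes the verification of the hypotheses of Theorem~\ref{Th:CotlarNTV} (the growth condition via Lemma~\ref{Le:PropiedadesPhiR}(4), the kernel bound via $\mathcal{R}\leq\Phi_R$, and the $L^2(\tilde\sigma)$-boundedness via Lemma~\ref{Le:TPhiRMaxAcotadoL2}) and the bookkeeping of the $\theta_\mu(B_R)$ factor completely transparent.
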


\begin{proof}
This is a direct consecuence of Theorem \ref{Th:CotlarNTV} and lemma \ref{Le:TPhiRMaxAcotadoL2}, taking $S=T_{\Phi_R}$, $\sigma=\chi_{B_0(R)}\mu$ and $C_0 \approx\theta_\mu(B_R)$.
\end{proof}
\par\medskip

With all these tools at hand, we can prove lemma \ref{Le:Diagonal}. Indeed, given $R\in\Top$, by lemmas \ref{Le:DesigualdadPuntual} and \ref{Le:TPhiRMuAcotado} we have
\begin{equation*}
||K_R\mu||_{L^2(\mu)}\leq ||T_{\Phi_R,*}(\chi_{B_0(R)}\mu)||_{L^2(\chi_R \mu)} + c\theta_\mu(B_R)\mu(R)^{\frac{1}{2}} \lesssim \theta_\mu(B_R)\mu(R)^{\frac{1}{2}},
\end{equation*}
and the desired conclusion follows after squaring both sides and summing over $R\in\Top$.

\section{The estimate of $\sum_{R, R'\in \Top, R\neq R'}\langle K_R\mu, K_{R'}\mu\rangle_\mu$}

Given $R,R'\in\Top$, $R\neq R'$, $\langle K_{R}\mu,  K_{R'}\mu\rangle_\mu = 0$ unless $R\cap R'\neq \emptyset$. Then,

\begin{equation*}
\sum_{R, R'\in \Top, R\neq R'}\langle K_R\mu, K_{R'}\mu\rangle_\mu = 2 \sum_{Q, R\in \Top, Q\subsetneq R}\langle K_Q\mu, K_R\mu\rangle_\mu
\end{equation*}

Arguing as in \cite{Tolsa-pubMat}, we can guess that bounding this sum would be relatively easy if 
\begin{equation*}
\int_{Q}K_Q\mu = 0,
\end{equation*}
but this is, in general, not the case. Indeed,
\begin{equation*}
K_Q\mu = \sum_{M\in\Tree(R)}T_M\mu = \sum_{M\in\Tree(R)}\chi_M T_{J(M)}\mu,
\end{equation*}
and while it is true that for all $M\in\Tree(R)$
\begin{equation*}
\int_{M}T_{J(M)}(\chi_M\mu)d\mu = 0
\end{equation*}
by antisimmetry, this does not imply that
\begin{equation*}
\int_{M}T_{J(M)}\mu=0
\end{equation*}
and so
\begin{equation*}
\int_Q K_Q\mu d\mu = 0
\end{equation*}
will not be true in general. Still, the fact that
\begin{equation*}
\int_M T_i(\chi_M\mu)d\mu = 0
\end{equation*}
for all $i\geq 0$ and all $M\in\D$ will be useful, as we will see in the proof of lemma \ref{Le:AcotacionND1}.
\par\medskip
We have
\begin{equation*}
\begin{aligned}
\sum_{Q, R\in \Top, Q\subsetneq R}\langle K_Q\mu, K_R\mu\rangle_\mu &= \sum_{R\in\Top}\sum_{P\in\Stop(R)}\sum_{Q\in\Top, Q\subset P} \langle K_Q\mu, K_R\mu\rangle_\mu \\
	&= \sum_{R\in\Top}\sum_{P\in\Stop(R)}\sum_{Q\in\Top, Q\subset P} \sum_{Q'\in\Tree(Q)}\langle T_{Q'}\mu, K_R\mu\rangle_\mu \\
	&= \sum_{R\in\Top}\sum_{P\in\Stop(R)}\sum_{Q\in\D(P)}\langle T_Q\mu,K_R\mu\rangle_\mu \\
	&= \sum_{R\in\Top}\sum_{P\in\Stop(R)}\sum_{i=J(P)}^{\infty}\sum_{Q\in\D_i(P)}\langle \chi_Q T_i\mu,K_R\mu\rangle_\mu \\
	&= \sum_{R\in\Top}\sum_{P\in\Stop(R)}\sum_{i=J(P)}^{\infty}\langle \chi_P T_i\mu,K_R\mu\rangle_\mu \\
\end{aligned}
\end{equation*}
\par\medskip
Now, fixed $R\in\Top$, $P\in\Stop(R)$ and $i\geq J(P)$, we define $m(J(P),i)$ as some intermediate number between $J(P)$ and $i$ (for example, the integer part of the arithmetic mean of $J(P)$ and $i$), and we decompose 
\begin{equation*}
P = \bigcup_{S\in\D_{m(J(P),i)}\colon S\subset P} S
\end{equation*}
so that
\begin{equation*}
\begin{aligned}
\sum_{Q, R\in \Top, Q\subsetneq R}\langle K_Q\mu, K_R\mu\rangle_\mu & = \sum_{R\in\Top}\sum_{P\in\Stop(R)}\sum_{i=J(P)}^{\infty}\langle \chi_P T_i\mu,K_R\mu\rangle_\mu \\
	&= \sum_{R\in\Top}\sum_{P\in\Stop(R)}\sum_{i=J(P)}^{\infty}\sum_{S\in\D_{m(J(P),i)}}\langle \chi_S T_i\mu,K_R\mu\rangle_\mu \\
	&=\sum_{R\in\Top}\sum_{P\in\Stop(R)}\sum_{i=J(P)}^{\infty}\sum_{S\in\D_{m(J(P),i)}}\langle \chi_S T_i(\chi_S\mu),K_R\mu\rangle_\mu \\
	& \qquad + \sum_{R\in\Top}\sum_{P\in\Stop(R)}\sum_{i=J(P)}^{\infty}\sum_{S\in\D_{m(J(P),i)}}\langle \chi_S T_i(\chi_{\R^d\setminus S}\mu),K_R\mu\rangle_\mu := \ND_1 + \ND_2
\end{aligned}
\end{equation*}

\subsection{The estimate of $\ND_1$.}

\begin{lemma}
\label{Le:AcotacionND1}
\begin{equation*}
\ND_1 \lesssim  \sum_{R\in\Top}\theta_\mu(B_R)^2\mu(R)
\end{equation*}
\end{lemma}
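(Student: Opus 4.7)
The plan is to use three ingredients: the antisymmetry identity $\int_S T_i(\chi_S\mu)\,d\mu=0$ (which follows by Fubini from the oddness of $k$ and the evenness of $\varphi_i$), a Lipschitz-type bound for $K_R\mu$ at scale $\l(P)$, and the thin-boundary property \refeq{eqfk490} of the David--Mattila cells. Fix $R\in\Top$, $P\in\Stop(R)$, $i\geq J(P)$ and $S\in\D_{m(J(P),i)}$ with $S\subset P$, where $m(J(P),i)$ is the integer part of $(J(P)+i)/2$. Choosing $c_S$ to be a $\mu$-average of $K_R\mu$ over $S$, the cancellation gives
\begin{equation*}
\langle\chi_S T_i(\chi_S\mu),K_R\mu\rangle_\mu=\int_S T_i(\chi_S\mu)(x)\bigl(K_R\mu(x)-c_S\bigr)\,d\mu(x).
\end{equation*}

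To control the oscillation of $K_R\mu$ on $S$, observe that for $x\in P\in\Stop(R)$ the only cells of $\Tree(R)$ containing $x$ are the ancestors of $P$, so by the telescoping $\sum_{j=J(R)}^{J(P)}\varphi_j=\psi_{J(R)}-\psi_{J(P)+1}$ one has
\begin{equation*}
K_R\mu(x)=\int\bigl[\psi_{J(R)}(x-y)-\psi_{J(P)+1}(x-y)\bigr]k(x,y)\,d\mu(y).
\end{equation*}
A mean value argument applied to the smooth integrand $\rho(x-y)k(x,y)$, $\rho=\psi_{J(R)}-\psi_{J(P)+1}$, splits the $x$-derivative into the $\nabla\rho$-piece (living on the two thin annuli where $\psi_{J(R)}$ and $\psi_{J(P)+1}$ transition) and the $\nabla_x k$-piece (of size $|x-y|^{-n-1}$). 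Both are integrated against $\mu$ on dyadic scales between $A_0^{-J(R)}$ and $A_0^{-J(P)}$, where property (3) of Lemma~\ref{Th:CoronaDesc} bounds the density by $\theta_\mu(B_R)$; the geometric series is dominated by its largest scale and yields
\begin{equation*}
|K_R\mu(x)-K_R\mu(x')|\lesssim A_0^{J(P)}\,\theta_\mu(B_R)\,|x-x'|\qquad (x,x'\in P),
\end{equation*}
so that $\|K_R\mu-c_S\|_{L^\infty(S,\mu)}\lesssim A_0^{J(P)-m(J(P),i)}\theta_\mu(B_R)$.

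It remains to estimate $T_i(\chi_S\mu)$ with the correct factor $\theta_\mu(B_R)$ rather than the coarse polynomial-growth constant $c_0$. I would split $S=S^\circ\cup(S\setminus S^\circ)$ with $S^\circ=\{x\in S:\dist(x,E\setminus S)>0.01\,A_0^{-i}\}$. On $S^\circ$ the support of $\varphi_i(x-\cdot)$ meets $\supp\mu$ only inside $S$, so $T_i(\chi_S\mu)(x)=T_i\mu(x)$ and the corona density control supplies the bound $\lesssim\theta_\mu(B_R)$; on the thin layer $S\setminus S^\circ$ one has only the crude $|T_i(\chi_S\mu)|\lesssim c_0$, but \refeq{eqfk490} applied with $\lambda=0.01\,A_0^{m(J(P),i)-i}$, combined with the doubling of $P$ (so $3.5B_S\subset 100B_P$ and $\mu(3.5B_S)\lesssim\theta_\mu(B_R)\l(P)^n$), provides the missing decay $A_0^{(m(J(P),i)-i)/2}$ needed to turn the $c_0$ into an effective $\theta_\mu(B_R)$ factor. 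Putting the two estimates together via Cauchy--Schwarz yields
\begin{equation*}
|\langle \chi_S T_i(\chi_S\mu),K_R\mu\rangle_\mu|\lesssim A_0^{(J(P)-i)/2}\,\theta_\mu(B_R)^2\,\mu(S),
\end{equation*}
and summing successively over $S\subset P$ in $\D_{m(J(P),i)}$ (gaining $\mu(P)$), over $i\geq J(P)$ (a convergent geometric series in $A_0^{-1/2}$), over $P\in\Stop(R)$ (gaining $\mu(R)$), and over $R\in\Top$ produces the target $\sum_R\theta_\mu(B_R)^2\mu(R)$. The main technical obstacle is precisely this $T_i(\chi_S\mu)$ estimate: the bare polynomial-growth bound loses a factor of $\theta_\mu(B_R)$, and it is the small-boundary property of the David--Mattila lattice—replacing the random-dyadic-lattice averaging used in \cite{Tolsa-pubMat}—that closes the gap.
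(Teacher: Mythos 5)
There is a genuine gap, and it sits precisely at the point you flag as "the main technical obstacle": the estimate of $T_i(\chi_S\mu)$. You claim that on $S^\circ$ one has $T_i(\chi_S\mu)(x)=T_i\mu(x)\lesssim\theta_\mu(B_R)$ "by the corona density control", and that $\mu(3.5B_S)\lesssim\theta_\mu(B_R)\ell(S)^n$. Neither is true in general. Property (3) of Lemma~\ref{Th:CoronaDesc} only controls $\theta_\mu(1.1B_Q)$ for $Q\in\Tree(R)$. But $S\in\D_{m(J(P),i)}$ lies strictly below $P\in\Stop(R)$, so $S$ belongs to $\Tree(P'')$ for some $P''\in\Top$ with $P''\subsetneq R$, and the density near $S$ at scale $A_0^{-i}$ is controlled by $\theta_\mu(B_{P''})$, which may be much larger than $\theta_\mu(B_R)$. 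The thin-boundary estimate \refeq{eqfk490} cannot repair this: it shrinks the \emph{measure} of the boundary layer $S\setminus S^\circ$ relative to $\mu(3.5B_S)$, but $\mu(3.5B_S)$ itself is not controlled by $\theta_\mu(B_R)\ell(S)^n$, and $T_i\mu$ on $S^\circ$ is not controlled by $\theta_\mu(B_R)$ either. So your per-$S$ bound $|\langle\chi_S T_i(\chi_S\mu),K_R\mu\rangle_\mu|\lesssim A_0^{(J(P)-i)/2}\theta_\mu(B_R)^2\mu(S)$ is false, and the subsequent "summing successively" collapses.

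This is in fact the central difficulty of the lemma, and the paper handles it differently. The paper only uses the crude pointwise bound $|T_i(\chi_S\mu)(x)|\lesssim\theta_{\mu,i}(x):=\mu[B(x,0.01A_0^{-i})]/A_0^{-ni}$, retaining the local density rather than replacing it with $\theta_\mu(B_R)$. It then sums $\int_P\theta_{\mu,i}\,d\mu$ over $i$-level cells $P'\subset P$ to get $\theta_\mu(1.01B_{P'})\mu(P')$, regroups the $P'$ according to which subtree $\Tree(P'')$ they belong to (which gives $\theta_\mu(1.01B_{P'})\lesssim\theta_\mu(B_{P''})$), and then controls the resulting double sum over $R\in\Top$ and $P''\subsetneq R$ by splitting $\{P''\subsetneq R\}=\bigcup_k\Stop^k(R)$ and applying Cauchy--Schwarz twice, exploiting the exponential gain $A_0^{(J(R_{P''})-J(P''))/2}$. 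The packing condition for $\Top$ is what ultimately closes the argument. Your proposal omits this entire combinatorial bookkeeping, which is where the real work is. Note also that the thin-boundary property \refeq{eqfk490} is not used in the paper's proof of $\ND_1$ at all; the remark you cite about avoiding random-lattice averaging refers to the estimate of $\ND_2$, where $T_i(\chi_{\R^d\setminus S}\mu)$ genuinely lives on $\partial_i S$.
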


\begin{proof}
Recall that
\begin{equation*}
\ND_1 = \sum_{R\in\Top}\sum_{P\in\Stop(R)}\sum_{i=J(P)}^{\infty}\sum_{S\in\D_{m(J(P),i)}}\langle \chi_S T_i(\chi_S\mu),K_R\mu\rangle_\mu
\end{equation*}

Fix $R\in\Top$, $P\in\Stop(R)$, $i\geq J(P)$ and $S\in\D_m(J(P),i)$. Since
\begin{equation*}
\int_S T_i(\chi_S\mu)d\mu = 0,
\end{equation*}
we have
\begin{equation*}
\langle \chi_S T_i(\chi_S\mu),K_R\mu\rangle_\mu = \int_S T_i(\chi_S\mu) K_R\mu d\mu = \int_S T_i(X_S\mu)[K_R\mu - K_R\mu(z_S)]d\mu.
\end{equation*}

Now, given $x\in S$, since $S\subset P$ and $P\in\Stop(R)$, we have that the cells from $\Tree(R)$ that contain $Q$ are the chain in $\D$ that starts in the parent of $P$ and ends in $R$. Therefore,

\begin{equation*}
\begin{aligned}	
K_R\mu(x) &= \sum_{Q\in\Tree(R)\colon x\in Q} T_Q\mu(x) \\
	&= \sum_{j\in J(R)}^{J(P)-1}T_j\mu(x) \\
	&= \int\left(\sum_{j\in J(R)}^{J(P)-1}\varphi_j(x-y)\right)k(x,y)d\mu(y) \\
	&= \int \left[ \psi_{J(R)}(x-y) - \psi_{J(P)}(x-y)\right] k(x,y)d\mu(y)
\end{aligned}
\end{equation*}

If we denote 
\begin{equation*}
\zeta_{R,P}(x,y) = \left[ \psi_{J(R)}(x-y) - \psi_{J(P)}(x-y)\right] k(x,y)
\end{equation*}
it is easy to check that for $x,x'\in S$ we have
\begin{equation*}
|\zeta_{R,P}(x,y)-\zeta_{R,P}(x',y)|\lesssim \frac{|x-x'|}{(\l(P)+|x-y|)^{n+1}}.
\end{equation*}

Therefore, for $x\in S$,
\begin{equation*}
\begin{aligned}
|K_R\mu(x)-K_R\mu(z_S)| &\lesssim \int_{\dist(y,P)\leq 0.01A_0^{-J(R)}} \frac{|x-z_S|}{(\l(P)+|x-y|)^{n+1}}d\mu(y) \\
	&\lesssim \frac{\l(S)}{\l(P)}\theta_\mu(B_R),
\end{aligned}
\end{equation*}
where the last inequality follows from \refeq{Eq:CrecLineal}, and so
\begin{equation*}
\begin{aligned}
|\langle \chi_ST_i(\chi_S\mu),K_R\mu\rangle_\mu| &\lesssim \frac{\l(S)}{\l(P)}\theta_\mu(B_R)\int_S |T_i(\chi_S\mu)|d\mu \\
	&= \frac{\l_{m(J(P),i)}}{\l_{J(P)}}\theta_\mu(B_R)\int_S |T_i(\chi_S\mu)|d\mu \\
	&\approx A_0^{\frac{J(P)-i}{2}}\theta_\mu(B_R)\int_S |T_i(\chi_S\mu)|d\mu.
\end{aligned}
\end{equation*}

Now, for $x\in S$,
\begin{equation*}
\begin{aligned}
|T_i(\chi_S\mu)(x)| &= \left|\int_{y\in S}\varphi_i(x-y)k(x,y)d\mu(y)\right| \\
	&= \left|\int_{y\in S, \, 0.001 A_0^{-i-1}<|x-y|<0.01A_0^{-i}}\varphi_i(x-y)k(x,y)d\mu(y)\right|\\
	&\lesssim \int_{y\in S, \, 0.001 A_0^{-i-1}<|x-y|<0.01A_0^{-i}}\frac{d\mu(y)}{|x-y|^{n}}\\
	&\lesssim \frac{\mu[B(x,0.01A_0^{-i})]}{A_0^{-ni}}:=\theta_{\mu,i}(x)
\end{aligned}
\end{equation*}
and so
\begin{equation*}
|\langle \chi_ST_i(\chi_S\mu),K_R\mu\rangle_\mu| \lesssim A_0^{\frac{J(P)-i}{2}}\theta_\mu(B_R)\int_S\theta_{\mu,i}(x)d\mu(x).
\end{equation*}
Therefore,
\begin{equation*}
\begin{aligned}
\ND_1 &\leq \sum_{R\in\Top}\sum_{P\in\Stop(R)}\sum_{i=J(P)}^{\infty}\sum_{S\in\D_{m(J(P),i)}} |\langle \chi_ST_i(\chi_S\mu),K_R\mu\rangle_\mu| \\
	&\lesssim \sum_{R\in\Top}\sum_{P\in\Stop(R)}\sum_{i=J(P)}^{\infty}\sum_{S\in\D_{m(J(P),i)}} A_0^{\frac{J(P)-i}{2}}\theta_\mu(B_R)\int_S\theta_{\mu,i}(x)d\mu(x)\\
	&\lesssim \sum_{R\in\Top}\theta_\mu(B_R)\sum_{P\in\Stop(R)}A_0^{\frac{J(P)}{2}}\sum_{i=J(P)}^{\infty}A_0^{-\frac{i}{2}}\int_P\theta_{\mu,i}(x)d\mu(x)\\
	&= \sum_{R\in\Top}\theta_\mu(B_R)\sum_{P\in\Stop(R)}A_0^{\frac{J(P)}{2}}\sum_{i=J(P)}^{\infty}A_0^{-\frac{i}{2}}\sum_{P'\in\D_i\colon P'\subset P}\int_{P'}\theta_{\mu,i}(x)d\mu(x)\\
	&\lesssim \sum_{R\in\Top}\theta_\mu(B_R)\sum_{P\in\Stop(R)}A_0^{\frac{J(P)}{2}}\sum_{P'\in\D(P)}A_0^{-\frac{J(P')}{2}}\theta_\mu[1.01B_{P'}]\mu(P'),
\end{aligned}
\end{equation*}
We reorganize the previous sum, to obtain 
\begin{equation}
\label{eq:142}
\begin{aligned}
\ND_1 &\lesssim \sum_{R\in\Top}\theta_\mu(B_R)\sum_{P\in\Stop(R)}A_0^{\frac{J(P)}{2}}\sum_{P''\in \Top\colon P''\subset P}\sum_{P'\in\Tree(P'')}A_0^{-\frac{J(P')}{2}}\theta_\mu[1.01B_{P'}]\mu(P')\\
\end{aligned}
\end{equation}
and from the fact that $P'\in\Tree(P'')$, we obtain that $\theta_\mu(1.01B_{P'})\lesssim \theta_\mu(B_{P''})$, so
\begin{equation}
\label{eq:143}
\begin{aligned}
\ND_1 &\lesssim \sum_{R\in\Top}\theta_\mu(B_R)\sum_{P\in\Stop(R)}A_0^{\frac{J(P)}{2}}\sum_{P''\in \Top\colon P''\subset P}\theta_\mu(B_{P''})\sum_{P'\in\Tree(P'')}A_0^{-\frac{J(P')}{2}}\mu(P')\\
	&\lesssim \sum_{R\in\Top}\theta_\mu(B_R)\sum_{P\in\Stop(R)}A_0^{\frac{J(P)}{2}}\sum_{P''\in \Top\colon P''\subset P}\theta_\mu(B_{P''})A_0^{-\frac{J(P'')}{2}}\mu(P'')\\
	&=\sum_{R\in\Top} \theta_\mu(B_R)\sum_{P''\in\Top\colon P''\subsetneq R} A_0^{\frac{J(R_{P''})-J(P'')}{2}}\theta_\mu(B_{P''})\mu(P'')
\end{aligned}
\end{equation}
where, given $R,P''\in\Top$ with $P''\subsetneq R$, $R_{P''}$ is the cell from $\Stop(R)$ that contains $P''$. To deal with this sum, we need to organize it in trees. To do so, define $\Stop^1(R) = \Stop(R)$ and, for $k>1$,
\begin{equation*}
\Stop^k(R) = \{Q\in\D(R)\colon \text{there exists }Q'\in\Stop^{k-1}(R)\text{ with }Q\in\Stop(Q')\}
\end{equation*}
so that
\begin{equation*}
\{P\in\Top \colon P\subsetneq R\} = \bigcup_{k=1}^{\infty}\Stop^k(R).
\end{equation*}
\par\medskip
This way, renaming $P''$ as $P$ in \refeq{eq:143}, we have
\begin{equation*}
\begin{aligned}
\ND_1 &\lesssim \sum_{R\in\Top} \theta_\mu(B_R)\sum_{P\in\Top\colon P\subsetneq R} A_0^{\frac{J(R_{P})-J(P)}{2}}\theta_\mu(B_{P})\mu(P) \\
	&= \sum_{R\in\Top}\theta_\mu(B_R)\sum_{k=1}^{\infty}\sum_{P\in\Stop^k(R)}A_0^{\frac{J(R_P)-J(P)}{2}}\theta_\mu(B_P)\mu(P) \\
	&\lesssim \sum_{R\in\Top}\theta_\mu(B_R)\sum_{k=1}^{\infty}A_0^{-\frac{k}{2}}\sum_{P\in\Stop^k(R)}\theta_\mu(B_P)\mu(P)^{\frac{1}{2}}\mu(P)^{\frac{1}{2}},
\end{aligned}
\end{equation*}
because $P\in\Stop^k(R) \Rightarrow J(P)-J(R_P)\geq k-1$. Then, using Cauchy-Schwarz's inequality twice, we get
\begin{equation*}
\begin{aligned}
\ND_1 &\lesssim  \sum_{R\in\Top}\theta_\mu(B_R)\sum_{k=1}^{\infty}A_0^{-\frac{k}{2}} \left(\sum_{P\in\Stop^{k}(R)}\theta_\mu(B_P)^2\mu(P)\right)^{\frac{1}{2}}\left(\sum_{P\in\Stop^k(R)}\mu(P)\right)^{\frac{1}{2}}\\
	&=\sum_{k=1}^{\infty}A_0^{-\frac{k}{2}}\sum_{R\in\Top}\theta_\mu(B_R)\mu(R)^{\frac{1}{2}}\left(\sum_{Q\in\Stop^k(R)}\theta_\mu(B_P)^2\mu(P)\right)^{\frac{1}{2}}\\
	&\leq \sum_{k=1}^{\infty}A_0^{-\frac{k}{2}}\left(\sum_{R\in\Top}\theta_\mu(B_R)^2\mu(R)\right)^{\frac{1}{2}}\left(\sum_{R\in\Top}\sum_{P\in\Stop^k(R)}\theta_\mu(B_P)^2\mu(P)\right)^{\frac{1}{2}}\\
	&\lesssim \sum_{R\in\Top}\theta_\mu(B_R)^2\mu(R),
\end{aligned}
\end{equation*}
as desired. 
\end{proof}
\subsection{The estimate of $\ND_2$.}

\begin{lemma}
\label{Le:AcotacionND2}
\begin{equation*}
\ND_2 \lesssim \sum_{R\in\Top} \theta_\mu(B_R)^2\mu(R).
\end{equation*}
\end{lemma}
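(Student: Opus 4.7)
The argument follows the pattern of the $\ND_1$ estimate (Lemma \ref{Le:AcotacionND1}), but the cancellation $\int_S T_i(\chi_S\mu)\,d\mu=0$ exploited there is unavailable here, because the relevant function is $T_i(\chi_{\R^d\setminus S}\mu)$ rather than $T_i(\chi_S\mu)$. In its place, the thin-boundary property of the David–Mattila cells will play the role that random-dyadic averaging played in the original proof in \cite{Tolsa-pubMat}. The key observation is that, since $\varphi_i$ is supported in $B(0,0.01\,A_0^{-i})$ while $\l(S)=A_0^{-m(J(P),i)}\gg A_0^{-i}$, the function $\chi_S\,T_i(\chi_{\R^d\setminus S}\mu)$ is in fact supported in the thin internal layer
\[
N^{int}_l(S) = \{x\in S:\dist(x,E\setminus S)\leq 0.01\,A_0^{-i}\}, \qquad l = i-m(J(P),i),
\]
and on this layer $|T_i(\chi_{\R^d\setminus S}\mu)(x)|\lesssim\theta_{\mu,i}(x):=\mu(B(x,0.01\,A_0^{-i}))/A_0^{-ni}$ pointwise. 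Estimate \refeq{eqfk490} then gives $\mu(N^{int}_l(S))\lesssim A_0^{-l/2}\mu(3.5 B_S)$, and the intermediate choice $m(J(P),i)\approx (J(P)+i)/2$ forces $l\approx (i-J(P))/2$, producing geometric decay in $i-J(P)$.

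I then plan to bound each term by
\[
|\langle \chi_S T_i(\chi_{\R^d\setminus S}\mu),K_R\mu\rangle_\mu|\leq \int_{N^{int}_l(S)}\theta_{\mu,i}\,|K_R\mu|\,d\mu,
\]
and sum up the contributions stage by stage. The sum over $S\in\D_{m(J(P),i)}$ with $S\subset P$ uses the disjointness of the $S$'s and turns the outer integral into one over a subset of $P$ of $\mu$-measure $\lesssim A_0^{-l/2}\mu(P)$ (invoking the bounded overlap of the balls $3.5 B_S$ at a fixed level). Cauchy–Schwarz against $\|K_R\mu\|_{L^2(\mu\lfloor_P)}$, followed by a geometric sum in $i\geq J(P)$ (since $\sum_{i\geq J(P)}A_0^{-\alpha(i-J(P))}\lesssim 1$ for any fixed $\alpha>0$), produces a bound per $(R,P)$ in terms of $\theta_\mu(B_R)\mu(P)^{1/2}$. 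The remaining sum over $P\in\Stop(R)$ is handled by grouping the stopping cells into generations $\Stop^k(R)$ and applying Cauchy–Schwarz twice, exactly as at the end of the proof of Lemma \ref{Le:AcotacionND1}, picking up an additional geometric factor $A_0^{-k/2}$. The final sum over $R\in\Top$ then invokes the packing estimate of Lemma \ref{Th:CoronaDesc}.

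The main technical obstacle I anticipate is obtaining the correct $\theta_\mu(B_R)^2$ dependence in the final estimate. One factor of $\theta_\mu(B_R)$ arises from controlling $K_R\mu$ in $L^2$ via Lemma \ref{Le:TPhiRMuAcotado}, but the second must come from bounding $\theta_{\mu,i}$ on the boundary layer by a multiple of $\theta_\mu(B_R)$. This is delicate because at scales $A_0^{-i}<\Phi_R(x)$ the localized density bound of Lemma \ref{Le:PropiedadesPhiR}(4) does not apply directly. The resolution is to split according to whether $x$ lies near the regularized stopping cells of $\Reg(R)$ or not, exploiting the relations in Lemma \ref{Le:PropCubosRegulares} to bound the density at small scales by a multiple of $\theta_\mu(B_R)$; combined with the thin-boundary gain, this should yield the required estimate.
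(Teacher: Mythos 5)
Your identification of the support restriction to the thin boundary layer $\partial_i S$, the use of the small-boundary property \refeq{eqfk490} to gain a geometric factor $A_0^{-(i-m(J(P),i))/2}$, the organization by $\Stop^k(R)$ with Cauchy--Schwarz, and the final appeal to the packing estimate are all correct and match the paper's argument. However, there is a genuine gap in the middle: the claim that one can bound $\theta_{\mu,i}(x)$ by a multiple of $\theta_\mu(B_R)$ on the boundary layer is false, and your proposed fix via Lemma \ref{Le:PropCubosRegulares} does not work.

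The issue is the following. When $i>J(P)$ for $P\in\Stop(R)$, the cells $M\in\D_i$ with $M\subset P$ are not in $\Tree(R)$, so the corona property (3) of Lemma \ref{Th:CoronaDesc} provides no control of $\theta_\mu(1.01B_M)$ in terms of $\theta_\mu(B_R)$; indeed, the density can grow without bound below $\l(P)$ since the stopping occurred precisely because the tree terminated there. Lemma \ref{Le:PropCubosRegulares}(4) only controls $\mu[B(x,r)]$ for $r>\l(Q)$, $Q\in\Reg(R)$, while the relevant scale $A_0^{-i}$ can be arbitrarily small, so it cannot rescue the bound. The paper's resolution is to \emph{not} attempt such a pointwise density bound against $\theta_\mu(B_R)$. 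Instead, one writes $\theta_\mu(1.01B_M)\lesssim\theta_\mu(B_{P'})$ where $P'\in\Top$, $P'\subset P$, is the unique top cell with $M\in\Tree(P')$, decomposes the sum over $M$ into the trees $\Tree(P')$ for $P'\in\Top$, $P'\subset P$, and arrives at a bound of the schematic form
\[
\ND_2 \lesssim \sum_{R\in\Top}\sum_{P'\in\Top\colon P'\subsetneq R}\theta_\mu(B_{P'})\,\|K_R\mu\|_{L^2(\chi_{P'}\mu)}\,\mu_{R,P'}^{1/2},
\]
where $\mu_{R,P'}$ carries the geometric gain from the thin boundaries. The two Cauchy--Schwarz applications (over $Q\in\Stop^k(R)$ and then over $R\in\Top$) pair $\|K_R\mu\|_{L^2(\mu)}^2\lesssim\theta_\mu(B_R)^2\mu(R)$ (via Lemma \ref{Le:Diagonal}) with $\sum_{Q\in\Stop^k(R)}\theta_\mu(B_Q)^2\mu(B_Q)$, and it is only at this last stage, through the packing condition of Lemma \ref{Th:CoronaDesc}, that both factors are brought back to $\sum_{R\in\Top}\theta_\mu(B_R)^2\mu(R)$. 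Thus the second factor of $\theta_\mu(B_R)$ does not come from any density bound; it comes from the quasi-orthogonality plus packing. You should replace your attempted density argument with this tree decomposition.
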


\begin{proof}
Recall that 
\begin{equation*}
\ND_2 = \sum_{R\in\Top}\sum_{P\in\Stop(R)}\sum_{i=J(P)}^{\infty}\sum_{S\in\D_{m(J(P),i)}}\langle \chi_S T_i(\chi_{\R^d\setminus S}\mu),K_R\mu\rangle_\mu.
\end{equation*}

Fix $R\in\Top$, $P\in\Stop(R)$, $i\geq J(P)$ and $S\in\D_{m(J(P),i)}$. We have
\begin{equation*}
\langle \chi_ST_i(\chi_{\R^d\setminus S}\mu),K_R\mu\rangle_\mu = \int_S T_i(\chi_{\R^d\setminus S}\mu)K_R\mu d\mu.
\end{equation*}
Now, if $x\in S$,
\begin{equation*}
T_i(\chi_{\R^d\setminus S}\mu)(x) = \int_{\R^d\setminus S}\varphi_i(x-y)k(x,y)d\mu(y) = \int_{y\not\in S,\, 0.001A_0^{-i-1}<|x-y|<0.01A_0^{i}}\varphi_i(x-y)k(x,y)d\mu(y),
\end{equation*}
so $T_i(\chi_{\R^d\setminus S}\mu)(x) = 0$ unless $\dist(x,E\setminus S)<0.01A_0^{-i}$. Thus, if we denote
\begin{equation*}
\partial_i S = \{x\in S\colon \dist(x,E\setminus S)\leq 0.01A_0^{-i}\}
\end{equation*}
we have that
\begin{equation*}
\supp(\chi_{S}T_i(\chi_{\R^d\setminus S}\mu))\subset\partial_i S.
\end{equation*}
Then,
\begin{equation*}
\langle \chi_ST_i(\chi_{\R^d\setminus S\mu}),K_R\mu\rangle_\mu = \int_{\partial_i S} T_i(\chi_{\R^d\setminus S}\mu)K_R\mu d\mu = \sum_{M\in\D_i\colon M\subset S}\int_{\partial_i S\cap M} T_i(\chi_{\R^d\setminus S}\mu)K_R\mu d\mu.
\end{equation*}

Now, for $M\in\D_i$ with $M\subset S$ and $x\in\partial_i S\cap M$, we have
\begin{equation*}
\begin{aligned}
|T_i(\chi_{\R^d\setminus S}\mu)(x)| &= \left| \int_{y\not\in S,\,0.001A_0^{-i-1}<|x-y|<0.01A_0^{i}}\varphi_i(x-y)k(x,y)d\mu(y)\right|\\
	&\lesssim \int_{0.001A_0^{-i-1}<|x-y|<0.01A_0^{i}}\frac{d\mu(y)}{|x-y|^n} \leq \frac{\mu[B(x,0.01A_0^{-i})]}{A_0^{-ni}}\lesssim \theta_\mu[1.01B_M].
\end{aligned}
\end{equation*}
Therefore,
\begin{equation*}
\begin{aligned}
|\langle \chi_ST_i(\chi_{\R^d\setminus S\mu}),K_R\mu\rangle_\mu| &\leq \sum_{M\in\D_i\colon M\subset S}\left|\int_{\partial_i S\cap M} T_i(\chi_{\R^d\setminus S}\mu)K_R\mu d\mu \right|\\
	&\lesssim \sum_{M\in\D_i\colon M\subset S}\theta_\mu[1.01B_M]\int_{\partial_i S\cap M}|K_R\mu|d\mu.
\end{aligned}
\end{equation*}

Then, if we denote 
\begin{equation*}
\partial_i\D_{m(J(P),i)}=\bigcup_{S\in\D_m(J(P),i)}\partial_i S
\end{equation*}
we have
\begin{equation*}
\begin{aligned}
\left|\sum_{i=J(P)}^{\infty}\sum_{S\in\D_{m(J(P),i)}}\langle \chi_ST_i(\chi_{\R^d\setminus S}\mu),K_R\mu\rangle_\mu\right| &\lesssim \sum_{i=J(P)}^{\infty}\sum_{S\in\D_{m(J(P),i)}}\sum_{M\in\D_i\colon M\subset S}\theta_\mu[1.01B_M]\int_{\partial_i S\cap M}|K_R\mu|d\mu \\
	&\lesssim \sum_{i=J(P)}^{\infty}\sum_{M\in\D_i\colon M\subset P}\theta_\mu[1.01B_M]\int_{\partial_{J(M)}\D_{m(J(P),J(M))}\cap M}|K_R\mu|d\mu\\
	&=\sum_{P'\in \Top\colon P'\subset P}\sum_{M\in\Tree(P')}\theta_\mu[1.01B_M]\int_{\partial_{J(M)}\D_{m(J(P),J(M))}\cap M}|K_R\mu|d\mu.
\end{aligned}
\end{equation*}

Here we have that $\theta_\mu[1.01B_M]\lesssim \theta_\mu(B_{P'})$ for $M\in\Tree(P')$, and therefore
\begin{equation*}
\begin{aligned}
\left|\sum_{i=J(P)}^{\infty}\sum_{S\in\D_{m(J(P),i)}}\langle \chi_ST_i(\chi_{\R^d\setminus S}\mu),K_R\mu\rangle_\mu\right| &\lesssim \sum_{P'\in\Top\colon P'\subset P}\theta_\mu(B_{P'})\sum_{M\in\Tree(P')}\int_{\partial_{J(M)}\D_{m(J(P),J(M))}\cap M}|K_R\mu|d\mu\\
	&\lesssim \sum_{P'\in\Top\colon P'\subset P}\theta_\mu(B_{P'})\sum_{i=J(P')}^{\infty}\int_{\partial_{i}\D_{m(J(P),i)}\cap P'}|K_R\mu|d\mu.
\end{aligned}
\end{equation*}
\par\medskip
Here we use Cauchy-Schwarz's inequality to get
\begin{equation*}
\left|\sum_{i=J(P)}^{\infty}\sum_{S\in\D_{m(J(P),i)}}\langle \chi_ST_i(\chi_{\R^d\setminus S}\mu),K_R\mu\rangle_\mu\right| \lesssim \sum_{P'\in\Top\colon P'\subset P}\theta_\mu(B_{P'})\sum_{i=J(P')}^{\infty} ||K_R\mu||_{L^2(\chi_{P'}\mu)}\mu[(\partial_i\D_{m(J(P),i)})\cap P']^{\frac{1}{2}}
\end{equation*}

Now, given $R,P'\in\Top$ with $P'\subsetneq R$, we set
\begin{equation*}
\mu_{R,P'}=\left(\sum_{i=J(P')}^{\infty}\mu[(\partial_i\D_{m(J(R_{P'}),i)})\cap P']^{\frac{1}{2}}\right)^{2}
\end{equation*}
so that
\begin{equation*}
\begin{aligned}
\ND_2 &\lesssim \sum_{R\in\Top}\sum_{P'\in\Top\colon P'\subsetneq R}\theta_\mu(B_{P'})||K_R\mu||_{L^2(\chi_{P'}\mu)}\mu_{R,P'}^{\frac{1}{2}}\\
	&=\sum_{k=1}^{\infty}\sum_{R\in\Top}\sum_{Q\in\Stop^k(R)}\theta_\mu(B_Q)||K_R\mu||_{L^2(\chi_Q\mu)}\mu_{R,Q}^{\frac{1}{2}},
\end{aligned}
\end{equation*}
and here, we use Cauchy-Schwarz's inequality twice again to get
\begin{equation*}
\begin{aligned}
\ND_2 &\lesssim \sum_{k=1}^{\infty}\sum_{R\in\Top} \left(\sum_{Q\in\Stop^k(R)}||K_R\mu||^2_{L^2(\chi_Q\mu)}\right)^{\frac{1}{2}}\left(\sum_{Q\in\Stop^k(R)}\theta_\mu(B_Q)^2\mu_{R,Q}\right)^{\frac{1}{2}}\\
	&\leq \sum_{k=1}^{\infty}\sum_{R\in\Top}||K_R\mu||_{L^2(\mu)}\left(\sum_{Q\in\Stop^k(R)}\theta_\mu(B_Q)^2\mu_{R,Q}\right)^{\frac{1}{2}}\\
	&\leq \sum_{k=1}^{\infty}\left(\sum_{R\in\Top}||K_R\mu||^2_{L^2(\mu)}\right)^\frac{1}{2}\left(\sum_{R\in\Top}\sum_{Q\in\Stop^k(R)}\theta_\mu(B_Q)^2\mu_{R,Q}\right)^{\frac{1}{2}}\\
	&\lesssim\left(\sum_{R\in\Top}\theta_\mu(B_R)^2\mu(R)\right)^\frac{1}{2} \sum_{k=1}^{\infty}\left(\sum_{R\in\Top}\sum_{Q\in\Stop^k(R)}\theta_\mu(B_Q)^2\mu_{R,Q}\right)^{\frac{1}{2}},
\end{aligned}
\end{equation*}
where the last inequality follows from lemma \ref{Le:Diagonal}. Therefore, if we prove that
\begin{equation*}
\sum_{k=1}^{\infty}\left(\sum_{R\in\Top}\sum_{Q\in\Stop^k(R)}\theta_\mu(B_Q)^2\mu_{R,Q}\right)^{\frac{1}{2}} \lesssim \left(\sum_{R\in\Top}\theta_\mu(B_R)^2\mu(B_R)\right)^{\frac{1}{2}},
\end{equation*}
we will reach the desired conclusion. To do so, recall that for fixed $k\geq 1$, $R\in\Top$ and $Q\in\Stop^k(R)$
\begin{equation*}
\mu_{R,Q}=\left(\sum_{i=J(Q)}^{\infty}\mu[(\partial_i\D_{m(J(R_{Q}),i)})\cap Q]^{\frac{1}{2}}\right)^{2}.
\end{equation*}

Now, for $i\geq J(Q)$,
\begin{equation*}
\begin{aligned}
\mu[(\partial_i\D_{m(J(R_{Q}),i)})\cap Q] &= \sum_{S\in\D_{m(J(R_Q),i)}\colon S\subset Q}\mu(\partial_i S) \\
	&=\sum_{S\in\D_{m(J(R_Q),i)}\colon S\subset Q}\mu\left(\left\{x\in S\colon \dist(x,\R^d\setminus S)<\frac{0.01A_0^{-i}}{\l(S)}\l(S)\right\}\right)\\
	&\lesssim \sum_{S\in\D_{m(J(R_Q),i)}\colon S\subset Q}\left(\frac{l_i}{l_m}\right)^{\frac{1}{2}}\mu(3.5B_S)\\
	&\lesssim A_0^{\frac{J(R_Q)-i}{2}}\mu(B_Q),
\end{aligned}
\end{equation*}
where the penultimate inequality follows from \refeq{eqfk490}. Therefore,
\begin{equation*}
\begin{aligned}
\mu_{R,Q}\lesssim \left(\sum_{i=J(Q)}^{\infty}\left(A_0^{\frac{J(R_Q)-i}{2}}\mu(B_Q)\right)^{\frac{1}{2}}\right)^2 = \mu(B_Q)\left(\sum_{i=J(Q)}^{\infty}A_0^{\frac{J(R_Q)-i}{4}}\right)^2\lesssim \mu(B_Q)A_0^{\frac{J(R_Q)-J(Q)}{2}}
\end{aligned}
\end{equation*}
and so
\begin{equation*}
\begin{aligned}
\sum_{k=1}^{\infty}\left(\sum_{R\in\Top}\sum_{Q\in\Stop^k(R)}\theta_\mu(B_Q)^2\mu_{R,Q}\right)^{\frac{1}{2}} &\lesssim \sum_{k=1}^{\infty}\left(\sum_{R\in \Top}\sum_{Q\in\Stop^k(R)}\theta_\mu(B_Q)^2\mu(B_Q)A_0^{\frac{J(R_Q)-J(Q)}{2}}\right)^{\frac{1}{2}}\\
	&\lesssim \sum_{k=1}^{\infty}A_0^{-\frac{k}{4}}\left(\sum_{R\in\Top}\sum_{Q\in\Stop^k(R)}\theta_\mu(B_Q)^2\mu(B_Q)\right)^\frac{1}{2}\\
	&\lesssim \left(\sum_{R\in\Top}\theta_\mu(B_R)^2\mu(B_R)\right)^{\frac{1}{2}}\\
	&\lesssim \left(\sum_{R\in\Top}\theta_\mu(B_R)^2\mu(R)\right)^{\frac{1}{2}},
\end{aligned}
\end{equation*}
as desired.
\end{proof}

\section{The proof of the main lemma \ref{Le:MainLemma}}

This is a straightforward consequence of lemmas \ref{Le:Diagonal}, \ref{Le:AcotacionND1}, \ref{Le:AcotacionND2} and \ref{Th:CoronaDesc}. Indeed, going back to section $6$,

\begin{equation*}
||T\mu||^2_{L^2(\mu)} = \sum_{R\in\Top}||K_R\mu||^2_{L^2(\mu)} + \sum_{R,R'\in\Top} \langle K_R\mu, K_{R'}\mu\rangle_\mu.
\end{equation*}

Now, by lemma \ref{Le:Diagonal},
\begin{equation*}
\sum_{R\in\Top}||K_R\mu||^2_{L^2(\mu)} \lesssim \sum_{R\in\Top}\theta_\mu(B_R)^2\mu(R),
\end{equation*}
and by lemmas \ref{Le:AcotacionND1} and \ref{Le:AcotacionND2}
\begin{equation*}
\left|\sum_{R,R'\in\Top} \langle K_R\mu, K_{R'}\mu\rangle_\mu\right| \lesssim \sum_{R\in\Top}\theta_\mu(B_R)^2\mu(R),
\end{equation*}
so
\begin{equation*}
||T\mu||^2_{L^2(\mu)} \lesssim \sum_{R\in\Top}\theta_\mu(B_R)^2\mu(R) \lesssim ||\mu|| + \iint_0^1 \beta_{\mu,2}(x,r)\theta_\mu[B(x,r)]\frac{dr}{r}d\mu(x),
\end{equation*}
as desired.

\section{The proof of Corollary \ref{Cor:LipHarmCap}}

The key idea behind the proof is to use Volberg's characterization of Lipschitz harmonic capacity \cite[Lemma 5.15]{Volberg}, which states that
\begin{equation*}
\kappa(E) \approx \sup \mu(E),
\end{equation*}
where the supremum is taken over all positive Borel measures $\mu$ supported on $E$ such that $\mu[B(x,r)]\leq r^n$ for all $x\in\R^{n+1}$ and all $r>0$ and such that the $n$-dimensional Riesz transform $\mathcal{R}$ with respect to $\mu$ is bounded in $L^2(\mu)$ with norm $\leq 1$. 
\par\medskip
Then, to prove Corollary \ref{Cor:LipHarmCap}, let $\mu$ be a positive Borel measure supported on $E$ satisfying \refeq{eq:SUPKAPPA}. Then, clearly $\mu[B(x,r)]\leq r^n$ for all $x\in\R^{n+1}$ and all $r>0$, and furthermore, applying Theorem \ref{Th:MainThm}, we get that $\mathcal{R}_\mu$ is bounded in $L^2(\mu)$ and its norm is bounded by some absolute constant. Therefore, for an appropriate multiple $\nu$ of $\mu$ we have that $\nu[B(x,r)]\leq r^n$ and $||\mathcal{R}_\nu||_{L^2(\nu)\rightarrow L^2(\nu)}\leq 1$, and so $\mu(E) \lesssim \nu(E) \lesssim \kappa(E)$, as desired.

\end{document}